\def\vt{{\vartheta}}
\newcommand*{\colorboxedAux}[3]{%
  \begingroup
    \colorlet{cb@saved}{.}%
    \color#1{#2}%
    \boxed{%
      \color{cb@saved}%
      #3%
    }%
  \endgroup
}
\newcommand\numberthis{\addtocounter{equation}{1}\tag{\theequation}}  
\def\fn[#1]#2{{f_{#1}\left(x_{#2}\right)}}
\newcommand*{\colorboxed}{}
\def\colorboxed#1#{%
  \colorboxedAux{#1}%
}
\def\exp{{\rm exp}}
\newenvironment{talign*}
 {\csname align*\endcsname}
 {\endalign}
 \numberwithin{equation}{section}
\newcommand{\calF}{\mathcal{F}}
\newcommand{\calH}{\mathcal{H}}
\newcommand{\calL}{\mathcal{L}}
\newcommand{\calV}{\mathcal{V}}
\newcommand{\calP}{\mathcal{P}}
\newcommand{\bbR}{\mathbb{R}}
\newcommand{\bbN}{\mathbb{N}}
\newcommand{\bbP}{\mathbb{P}}
\newcommand{\bbE}{\mathbb{E}}
\newcommand{\frakX}{\mathfrak{X}}
\newtheorem{thm}{Theorem}[section]
\newtheorem{lem}[thm]{Lemma}
\newtheorem{ass}[thm]{Assumption}
\newtheorem{defn}[thm]{Definition}
\newtheorem{eg}[thm]{Example}
\newtheorem{cor}[thm]{Corollary}
\newtheorem{rmk}[thm]{Remark}
\newtheorem{prop}[thm]{Proposition}
\newtheorem*{theorem*}{Theorem}
\newcommand{\ie }{\emph{i.e.}, }
\newcommand{\resp}{\emph{resp.} }
\DeclareMathOperator*{\argmin}{argmin} 
\newcommand{\norm}[1]{\left\lVert#1\right\rVert}
\newcommand{\abs}[1]{\left\vert#1\right\vert}
\newcommand{\floor}[1]{\left\lfloor#1\right\rfloor}
\newcommand{\ceil}[1]{\left\lceil#1\right\rceil}
\newcommand{\innerprod}[2]{\left\langle#1,#2\right\rangle}
\newcommand{\set}[1]{\left\{#1\right\}}
\DeclareMathOperator*{\tr}{tr}
\DeclareMathOperator*{\grad}{Grad}
\DeclareMathOperator*{\wgrad}{Grad_{W_2}}
\DeclareMathOperator*{\bwgrad}{Grad_{BW}}
\renewcommand{\vt}{\mathcal{VT}}
\newcommand{\SPD}{\mathrm{SPD}}
\newcommand{\Sym}{\mathrm{Sym}}
\newcommand{\purple}[1]{\textcolor{black}{#1}}
\title{Acceleration via silver stepsize on Riemannian manifolds with applications to Wasserstein space}
\author{%
Jiyoung Park\\
Department of Statistics\\
Texas A\&M University\\
\texttt{wldyddl5510@tamu.edu} \\
\AND
Abhishek Roy\\
Department of Statistics\\
Texas A\&M University\\
\texttt{abhishekroy@tamu.edu}\\
\AND
Jonathan W. Siegel \\
Department of Mathematics \\
Texas A\&M University\\
\texttt{jwsiegel@tamu.edu}\\
\And
Anirban Bhattacharya \\
Department of Statistics\\
Texas A\&M University\\
\texttt{anirbanb@stat.tamu.edu}
}
\begin{document}

\maketitle

\begin{abstract}
There is extensive literature on accelerating first-order optimization methods in a Euclidean setting. Under which conditions such acceleration is feasible in Riemannian optimization problems is an active area of research. Motivated by the recent success of dynamic stepsize methods in the Euclidean setting, we undertake a study of such algorithms in the Riemannian setting. We provide the new class of algorithms determined by the choice of vector transport that allows the dynamic stepsize acceleration on Riemannian manifolds for the function classes associated with the corresponding vector transport. As a core application, we show our algorithm recovers the standard Wasserstein gradient descent on 2-Wasserstein space, and as a result provides the first provable accelerated gradient method in Wasserstein space. In addition, we validate the numerical strength of the algorithm for standard benchmark tasks on the space of symmetric positive definite matrices.
\end{abstract}

\section{Introduction}

Consider the Riemannian optimization problem
\begin{align}
    \min_{x \in N} f(x), \label{eq:mainprob}
\end{align}
where $N \subseteq M$ is a geodesically convex subset of a Riemannian manifold $M$, and $f: N \to \bbR$ is a continuously differentiable geodesically convex functional. 
A popular approach to solve \eqref{eq:mainprob} is via Riemannian gradient descent (RGD) \cite{zhang2016first} given by,
\begin{align}\label{eq:riem_gradient_basic}
    x_{n+1} &= \exp_{x_n}\left(-\eta_{n} \grad f(x_n)\right), 
\end{align}
where $\exp_{x}(\cdot)$ is the exponential map at $x$, $\eta_n$ is the stepsize at iteration $n$, and $\grad$ denotes the Riemannian gradient. It is known that for geodesically convex and smooth functionals $f$, constant stepsize RGD has an $O(1/n)$ convergence rate as in Euclidean spaces \cite{zhang2016first}.

A natural follow-up question is whether one can find first-order algorithms that achieve an \emph{accelerated} convergence rate. This is motivated by the success of accelerated first-order methods in Euclidean settings, most notably Nesterov’s method \cite{nesterov1983AMF}, which uses momentum to achieve an $O(1/n^2)$ rate for convex and smooth objectives. Extensive efforts have been made to achieve the same accelerated rate using similar acceleration in Riemannian optimization problems under various settings \cite{liu2017accelerated, zhang2018estimate, ahn2020nesterovs, siegel2021orthogonal, alimisis21momentum, criscitiello2022acceleratedfirstordermethodnonconvex, martinez_rubio22spherical, kim22riem_accel, han2023riemannian}. However, these works typically rely on additional constraints, stronger assumptions, or modifications to the basic gradient descent update \eqref{eq:riem_gradient_basic}. For example, \cite{liu2017accelerated} involves an intractable nonlinear operator. The analysis in \cite{han2023riemannian} relies on a submanifold structure and establishes acceleration only in the asymptotic regime. All the other algorithms require both upper and lower sectional curvature bounds. 
We refer to \cite{d2021acceleration} for a general survey of momentum-based acceleration methods, and to \cite[Sections~1, 2]{kim22riem_accel} for Riemannian variants.


On the other hand, there is a line of work showing that, in the Euclidean case, an accelerated convergence rate is possible by using a carefully designed \emph{dynamic stepsize schedule} without any modification to vanilla gradient descent. This idea goes back to \cite{young1953chebyshev}; for quadratic functions, choosing $\eta_n$ to be Chebyshev stepsizes in gradient descent achieves the $O(1/n^2)$ rate. Generalizing this idea to general convex and smooth functions, \cite{altschuler2018greed, altschuler2024accelerationI, altschuler2024accelerationII, bok2024acceleratingproximalgradientdescent}  introduced the \emph{silver stepsize} schedule—a carefully designed stepsize sequence that guarantees an improved convergence rate of $O(1/n^{\log_2 \rho})$, where $\rho = 1 + \sqrt{2}$. While slower than the $O(1/n^2)$ rate of Nesterov's acceleration, this method significantly outperforms constant stepsize gradient descent and shows that standard gradient descent, with a carefully designed stepsize schedule, can achieve meaningful acceleration. Its further studies--including the optimality of the stepsize and generalization to arbitrary number of iterations--are active area of research in the field of optimization \citep{Grimmer2024LongSteps, grimmer2024composingoptimizedstepsizeschedules}. Motivated by the success of the dynamic stepsize schedule in the Euclidean case, in this work, we ask the following question,
\begin{quote}
    Is it possible to accelerate Riemannian gradient descent by using the dynamic stepsize schedule?
\end{quote}

\paragraph{Main contribution} 
Towards addressing the above question, we make the following contributions. 

\begin{enumerate}
    \item We introduce a family of algorithms, vector-transported Riemannian gradient descent (VTRGD), parameterized by the choice of vector transport $\vt$. This framework enables silver step-size acceleration on Riemannian manifolds for function classes defined relative to $\vt$. To formalize these classes, we define the notions of $\vt$-geodesic convexity (resp. $L$-smoothness) with base point $b \in M$.
    \item We show if a function is $\vt$-geodesically $L$-smooth with base $b$ and $\vt$-geodesically convex with all base, with VTRGD, silver stepsize schedule achieves the accelerated convergence rate of $O(1/n^{\log_2 \rho})$, and the rate of $\exp(-O(n/\kappa^{\log_{\rho}2}))$ when $f$ is in addition geodesically strongly convex with condition number $\kappa$. These rates match the corresponding rates in the Euclidean case, and provides the acceleration with minimal assumptions on the manifold. In particular, we drop the curvature assumption and diameter assumption typically required for the momentum-based accelerated RGDs.
    \item We show when $\vt = \Gamma$, the parallel transport, then $\Gamma$-geodesic convexity and $\Gamma$-geodesic smoothness are satisfied for some non-trivial Riemannian optimization problems. In particular, we show our algorithm coincides to the classical Wasserstein gradient descent in 2-Wasserstein space, the space where previous accelerated algorithms fail. Hence, our method provides the first provable accelerated result for (usual) Wasserstein gradient descents. We also provide numerical applications in the space of symmetric positive definite matrices.
\end{enumerate}

\section{Background}\label{section:background}
\paragraph{Riemannian manifolds}
In this section, we review the basic concepts of Riemannian manifold while deferring the rigorous description to Appendix~\ref{appendix:riem_geom}. At a point $x$ on a manifold $M$, tangent vectors are the velocity vectors of smooth curves on $M$ that pass through $ x $. The tangent space $ T_xM $ is the vector space consisting of all such tangent vectors at $ x $. A Riemannian manifold is a manifold equipped with an inner product $\innerprod{\cdot}{\cdot}_x$ for each tangent space $ T_xM $, called a Riemannian metric. For $x,y \in M$, the distance $d(x,y)$ is the infimum of the length of all piecewise continuously differentiable curves from $x$ to $y$. A Riemannian gradient of the differentiable function $f: M \to \bbR$ at $x$ is a tangent vector $\grad f(x) \in T_x M$ satisfying $d_v f(x) = \innerprod{\grad f(x)}{v}_x$ for all $v \in T_x M$. Here, $d_vf(x)$ is a directional derivative of $f$ at $x$ along the direction $v$. For $(x,v) \in TM$, where $TM :=\coprod_{x \in M} T_x M$ denotes the tangent bundle, a smooth curve $\gamma_v:[0,1] \to M$ with $\gamma_v(0) = x$ and $\gamma_v'(0) = v$ is called a (constant speed) geodesic if it has the locally minimum length with zero acceleration. The exponential map $\exp_x: T_{x}M \to M$ is a map defined by $\exp_x(v) = \gamma_v(1)$.
$\exp_x(v)$ transports the point $x$ in the direction of the tangent vector $v$, following the geodesic $\gamma_v$. It is known that $\exp_x$ is a local diffeomorphism in some neighborhood $U$ of $0 \in T_x M$. Hence, $\exp_x$ allows the inverse on $U$, which is called the logarithmic map $\log_x: \exp_x(U) \to T_x M$. While the exponential and logarithmic maps are always locally well-defined, they may not be globally well-defined. 

A parallel transport $\Gamma(\gamma)_{t_0}^{t_1}: T_{\gamma(t_0)} M \to T_{\gamma(t_1)} M$ is a way to transport a tangent vector along the curve $\gamma$ parallely. If $\gamma$ is a geodesic curve such that $\gamma(0) = x, \gamma(1) = y$, then we simply denote $\Gamma(\gamma)_{0}^{1}$ as $\Gamma_x^y$, a (geodesic) parallel transport from $T_x M$ to $T_y M$. One can generalize the notion of the parallel transport by vector transport \citep{Godaz2021VectorTransportFree}. For any $x, y \in M$, a vector transport $\vt_{x}^{y}:T_x M \to T_y M$ is an operator which maps a tangent vector $v \in T_{x}M$ to another tangent space $T_y M$, satisfying $\vt_x^x = id$. Typical examples include the adjoint of the differential of the exponential map $\vt_x^y = (d\exp_{x})_{\log_x y}^*$ and parallel transports $\vt_x^y = \Gamma_x^y$. 

Lastly,  we introduce the notion of geodesic convexity and smoothness.
\begin{defn}[Geodesic convexity]
    We say $N \subseteq M$ is a geodesically convex subset of $M$ if for all $x, y \in N$ there exists a geodesic $\gamma$ such that $\gamma(0) = x, \gamma(1) = y$, and $\gamma(t) \in N$ for all $t \in [0,1]$. We say a differentiable function $f:N \to \bbR$ is geodesically $\alpha$-strongly convex if for all $x, y\in N$ 
\[
f(y) \geq f(x) + \innerprod{\grad f(x)}{\log_x y}_{x} + \frac{\alpha}{2}d^2(x,y).
\]
If the above inequality holds with $\alpha = 0$, then $f$ is said to be geodesically convex.
\end{defn}  
\begin{defn}[Geodesic smoothness]\label{def:geosmooth}
We say $f$ is geodesically $L$-smooth if for all $x, y \in M$
\[
f(y) \leq f(x) + \innerprod{\grad f(x)}{\log_x y} + \frac{L}{2}d^2(x,y).
\]
\end{defn}  

Some literature use the $L$-Lipschitz continuity of the gradient function as the definition of geodesic $L$-smoothness which in fact implies Definition~\ref{def:geosmooth} (see Definition~\ref{defn:geo_convex_and_smooth_general} and Lemma~\ref{lem:descent_lemma}). However, we provide the above definition, as it is more directly related to our $\vt$ extension which we will introduce in Section~\ref{section:riem_silver}.


\paragraph{Silver stepsize in Euclidean space}
In this section, we present the silver stepsize schedule \cite{altschuler2024accelerationII} for Euclidean optimization problem. Consider the problem \eqref{eq:mainprob} where $N\equiv \mathbb{R}^d$, and $f$ is convex and $L$-smooth. 
A standard approach is gradient descent, which updates via $x_{n+1} = x_n - \eta \nabla f(x_n)$ for a fixed stepsize $\eta$. In contrast, the silver stepsize schedule is a sequence of dynamic stepsizes $\{\eta_n\}_{n \in \bbN}$. For $n = 2^{k} - 1$ where $k \in \bbN$, $\{\eta_n\}_{n \in \bbN}$ is given by the following inductively constructed sequence:
\begin{align}
    \eta^{(k+1)} = [\eta^{(k)}, 1 + \rho^{k-1}, \eta^{(k)}],\label{eq:silverstep}
\end{align}
where $\rho = 1 + \sqrt{2}$. We set $\eta_0 = \rho - 1$. For example, for $k = 1, 2,3$, $\eta^{(k)}$ has the following form:
\begin{align*}
    \eta^{(1)} = [\sqrt{2}], \quad \eta^{(2)} =  [\sqrt{2}, 2, \sqrt{2}], \quad \eta^{(3)} = [\sqrt{2}, 2, \sqrt{2}, 2 + \sqrt{2} , \sqrt{2}, 2, \sqrt{2}].
\end{align*} 
In Euclidean optimization, the silver stepsize was recently shown to improve the convergence rate of the gradient descent from $O(1/n)$ to $O(1/n^{\log_2 \rho})$ \cite{altschuler2024accelerationII}. 

The philosophy of the silver stepsize extended to arbitrary $n$, with the name \emph{long stepsize} \citep{Grimmer2024LongSteps, grimmer2024composingoptimizedstepsizeschedules}. For the brevity of the paper, for this work we focus on the silver stepsize.

\section{Silver stepsize RGD: Assumptions and Preliminaries}\label{section:riem_silver}

In this section, we state the assumptions on the manifold and objective function required to solve problem \eqref{eq:mainprob} using silver stepsize RGD \eqref{eq:riem_gradient_basic}.
\begin{ass}[Assumptions for Riemannian manifold]\label{assumption:common}\
\begin{enumerate}
    \item $M$ is a complete Riemannian manifold, \ie any two points are connected by some geodesic.
    \item $N \subseteq M$ is open, geodesically convex subset.
    \item Exponential maps and logarithmic maps are all well-defined and computationally tractable on $N$.
\end{enumerate}
\end{ass}
\begin{ass}[Assumptions on the objective]\label{assumption:for_silver_stepsize}
We make the following assumptions on $f: N \to \bbR$. 
    \begin{enumerate}
        \item $f$ is geodesically convex and has a global minimizer $x_*\in N$. 
        \item All the iterates of our algorithms are well defined and remain inside $N$. 
        \item For a given linear vector transport $\vt$, There exists a constant $L > 0$ and $b \in N$ such that for all $x_i, x_j$ in the RGD trajectory, $i,j=0,1,2,\cdots, *$, \begin{align}\label{eq:f_inequality_condition}
        \begin{split}
        Q_{ij;b} &:= 2L(f(x_i) - f(x_j)) 
        -2L\innerprod{\vt_{x_j}^{b}\grad f(x_j)}{\log_{b} x_i - \log_{b} x_j}_{b}\\
        &\qquad -\norm{\vt_{x_i}^{b} \grad f(x_i) - \vt_{x_j}^{b}\grad f(x_j)}_{b}^2 \geq 0.
        \end{split}
        \end{align}
    \end{enumerate}
\end{ass}
\begin{rmk}
   Assumptions \ref{assumption:common} and \ref{assumption:for_silver_stepsize}, excluding \eqref{eq:f_inequality_condition}, are standard in Riemannian optimization literature \cite{alimisis21momentum, kim22riem_accel, han2023riemannian} and ensure well-behaved RGD iterates. Whereas we additionally assume \eqref{eq:f_inequality_condition}, we do not require the curvature bound or diameter bound on $N$ typically assumed in (momentum-based) RGD algorithms.
\end{rmk}

Some comments on \eqref{eq:f_inequality_condition} are in order. In Euclidean space, since all tangent spaces are the same, \ie $\vt_{x_i}^{b} = id$, \eqref{eq:f_inequality_condition} holds for any $x_i, x_j, b \in \bbR^d$ if and only if $f$ is convex and $L$-smooth \citep[Theorem 2.1.5]{nesterov2014introductory}. However, on Riemannian manifolds, \eqref{eq:f_inequality_condition} does not directly match with standard geodesic convexity and smoothness. To provide the interpretation of \eqref{eq:f_inequality_condition} in Riemannian manifold as in Euclidean space, we introduce the new notion of convexity and smoothness, which we dub \emph{$\vt$-geodesic convexity (smoothness)}. 

\begin{defn}[$\vt$-geodesic convexity]\label{defn:generalized_geodesic_convexity}
    A functional $f: N \to \bbR$ is called $\vt$-geodesically convex with base $b \in M$ if for all $x, y \in N$, we have, 
\begin{align*}
    f(y) \geq f(x) + \innerprod{\vt_x^b \grad f(x)}{\log_b y - \log_b x}_b.\numberthis\label{eq:ggc}
\end{align*}
    $f$ is called $\vt$-geodesically convex if \eqref{eq:ggc} holds for all $b \in N$.
\end{defn}

\begin{defn}[$\vt$-geodesic $L$-smooth]\label{defn:vt_l_smooth}
    A functional $f : M \to \bbR$ is called $\vt$-geodesically $L$-smooth with base $b \in M$ if for all $x, y \in M$ we have
    \begin{align}\label{eq:vt_l_smooth}
        f(y) &\leq f(x) + \innerprod{\vt_x^b \grad f(x)}{\log_b y - \log_b x} + \frac{L}{2}\norm{\log_b y - \log_b x}_b^2.
    \end{align}
    $f$ is called $\vt$-geodesically $L$-smooth if \eqref{eq:vt_l_smooth} holds for all $b \in M$.
\end{defn}

We provide the interpretation of $\vt$-geodesic convexity (\resp $L$-smoothness) for some representative choices of $\vt$. One canonical example is $\vt_b^x = (d\exp_b)_{\log_b x}^*$. For such $\vt$, the $\vt$-geodesic convexity (\resp $L$-smoothness) with base $b$ is equivalent to the (Euclidean) convexity (\resp $L$-smoothness) of the function $F(v) := f(\exp_b(v))$ defined on the tangent space. Another natural choice of $\vt$ is the parallel transport $\Gamma$. In the optimal transport literature, $\Gamma$-geodesic convexity is already a popular concept, namely \emph{generalized geodesic convexity} \citep{ambrosio2008gradientflow, santambrogio2014introduction, salim2020wasserstein, diao2023forwardbackward}, and showed its wide applicability for studying the proximal operator, $\Gamma$-convergence in 2-Wasserstein space, and Riemannian gradient methods \citep{chewi2020gradient, altschuler2021averaging}. Motivated from this fact, when $\vt = \Gamma$, we will dub $\Gamma$-geodesic convexity (\resp smoothness) by \emph{generalized geodesic convexity (\resp smoothness)}. Since $\vt = \Gamma$ will be our main application in Section~\ref{section:application}, we introduce more detail about generalized geodesic convexity in Appendix~\ref{appendix:functionals_generalized_geodesic_convexity}.

If the function is $\vt$-geodesically convex (\resp $L$-smooth), then it is geodesically convex (\resp $L$-smooth) by taking $b = x$. However, a function being $\vt$-geodesically convex (\resp $L$-smooth) with \emph{single base} $b$ is \emph{not} a strictly stronger condition than geodesic convexity (\resp $L$-smoothness)  and is comparable. For example, for $\vt = \Gamma$, \ie parallel transport, the function $x \mapsto \frac{1}{2}d^2(x,p)$ on a Hadamard manifold--a manifold with non-positive curvature--is generalized geodesically 1-smooth with base $p$, while it is not geodesically smooth (see Example~\ref{eg:sq_dist_ggs_with_base} and \cite{criscitiello2025horosphericallyconvexoptimizationhadamard}).

\begin{rmk}
    We make one remark about $\vt$-geodesic $L$-smoothness. Typically Definition~\ref{defn:vt_l_smooth} is called \emph{descent lemma} or \emph{quadratic upper bound} in optimization literature, and geodesic $L$-smoothness is defined by Lipschitz continuity of the gradient, which turns out to imply the quadratic upper bound condition (see Lemma~\ref{lem:descent_lemma}). However, for our $\vt$-geodesic smoothness, Lipschitz gradient type condition no longer implies \eqref{eq:vt_l_smooth} for general $\vt$, unless $\gamma'(t) = \left(\vt_{\gamma(0)}^{\gamma(t)}\right)^* \gamma'(0)$ for a curve $\gamma$, where $*$ denotes the adjoint operator. Such conditions are met for canonical choice of vector transport like $\vt = \Gamma$. However, we provide Definition~\ref{defn:vt_l_smooth} to pursue the generality, since our theorem only requires \eqref{eq:vt_l_smooth} rather than Lipschitz gradient type condition. 
\end{rmk}

\begin{figure}[t]
    \centering
    \includegraphics[width=0.3\linewidth]{figures/vt_convexity.png}
    \caption{Geometric illustration of $\vt$-geodesic convexity. Usual geodesic convexity (\resp $L$-smoothness) means for any $x, y \in N$, the function is convex (\resp $L$-smooth) along the geodesic curve $\gamma_1(t)$. On the other hand, $\vt$-geodesic convexity ($L$-smoothness) with base $b$ implies the function is convex along a curve $\gamma_2(t)$. Note a curve $\gamma_2$ exists for general cases (see Remark~\ref{rmk:existence_of_generalized_geodesic}).}
    \label{fig:generalized_geodesic}
\end{figure}
\vspace{-0.05in}
\begin{rmk}[Geometric interpretation of $\vt$-geodesic convexity]\label{rmk:generalized_geodesic_convexity_explanation}
    Intuitively, for any three points $x,y,b\in N$, $\vt$-geodesic convexity requires $f$ to be convex along a curve from $x$ to $y$, where the initial velocity is measured in the tangent space at a third point $b \in M$ (see Figure~\ref{fig:generalized_geodesic} for the detail). This generalizes standard geodesic convexity, which corresponds to the special case $z = x$.
\end{rmk}

We now establish the relationship between \eqref{eq:f_inequality_condition} and convexity and smoothness, as in Euclidean space. Proposition \ref{prop:generalized_geodesic_and_f_ineq_mainbody_one} provides a sufficient condition for \eqref{eq:f_inequality_condition}.
\begin{prop}\label{prop:generalized_geodesic_and_f_ineq_mainbody_one}
    Let $f$ be a $\vt$-geodesically $L$-smooth with base $b \in N$, $\vt$-geodesically convex function, and for all $x, y \in N$, $z := \exp_{b}\left(-\frac{1}{L}\left(\vt_y^b \grad f(y) - \vt_{x}^{b} \grad f(x)\right) + \log_b y\right) \in N$. Then $f$ satisfies  \eqref{eq:f_inequality_condition} for all $x_i, x_j \in N$.
\end{prop}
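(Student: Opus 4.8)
The plan is to mimic the Euclidean argument behind \citep{nesterov2014introductory}[Theorem 2.1.5], but to replace the tangent-space manipulations that implicitly use a single global inner product with careful transports, and to absorb the curvature distortion into the generalized geodesic convexity inequality \eqref{eq:ggc}. Fix $x_i, x_j$ in the trajectory and set, as in the statement, $z := \exp_{y}\bigl(-\tfrac1L(\grad f(y) - \Gamma_{x}^{y}\grad f(x))\bigr)$ with the identification $x \equiv x_i$, $y \equiv x_j$. The quantity $Q_{ij}$ is, up to the factor $2L$, the standard ``cocoercivity-type'' expression
\[
\tfrac{1}{2L}Q_{ij} = \bigl(f(x_i) - f(x_j)\bigr) - \innerprod{\grad f(x_j)}{\log_{x_j} x_i}_{x_j} - \tfrac{1}{2L}\norm{\Gamma_{x_i}^{x_j}\grad f(x_i) - \grad f(x_j)}_{x_j}^2 .
\]
I would prove $Q_{ij}\ge 0$ by showing $f(x_i) \ge f(x_j) + \innerprod{\grad f(x_j)}{\log_{x_j}x_i}_{x_j} + \tfrac{1}{2L}\norm{\Gamma_{x_i}^{x_j}\grad f(x_i) - \grad f(x_j)}_{x_j}^2$, i.e. the Riemannian analogue of the refined first-order inequality for smooth convex functions.

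The key steps, in order: First, I would apply the generalized geodesic convexity inequality \eqref{eq:ggc} with base point $z$, evaluated at the pair $(x_j, z)$ — that is, take ``$x$''$=x_j$, ``$y$''$=z$ in \eqref{eq:ggc} — to get a lower bound on $f(z)$ in terms of $f(x_j)$ and an inner product at $z$ involving $\log_z z - \log_z x_j = -\log_z x_j$ and $\Gamma_{x_j}^z\grad f(x_j)$. Second, I would use geodesic $L$-smoothness (Definition~\ref{def:geosmooth}) along the geodesic from $x_j$ to $z$ to upper bound $f(z)$: the standard descent-lemma consequence of $L$-smoothness gives $f(z) \le f(x_j) + \innerprod{\grad f(x_j)}{\log_{x_j}z}_{x_j} + \tfrac{L}{2}d^2(x_j,z)$, and by the defining choice of $z$ we have $\log_{x_j}z = -\tfrac1L(\grad f(x_j) - \Gamma_{x_i}^{x_j}\grad f(x_i))$, so the right-hand side becomes explicit and equals $f(x_j) - \tfrac{1}{2L}\norm{\grad f(x_j)-\Gamma_{x_i}^{x_j}\grad f(x_i)}_{x_j}^2$. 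Third, I would similarly use generalized geodesic convexity with base $z$ on the pair $(x_i, z)$ — ``$x$''$=x_i$, ``$y$''$=z$ — to relate $f(z)$ and $f(x_i)$. Fourth, combining the chain — upper bound on $f(z)$ from smoothness, lower bounds on $f(z)$ from the two applications of \eqref{eq:ggc}, plus the geodesic convexity inequality linking $f(x_i)$ and $f(x_j)$ directly — and carefully tracking that all inner products in the $f(z)$-estimates are taken in $T_zN$ (so that parallel transport isometry can be invoked to rewrite $\innerprod{\Gamma_{x_i}^z\grad f(x_i) - \Gamma_{x_j}^z\grad f(x_j)}{\cdot}_z$ terms back at $x_j$), should telescope to the desired inequality.

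The main obstacle I anticipate is the bookkeeping of base points and transports in the cross terms: the two applications of \eqref{eq:ggc} produce inner products of the form $\innerprod{\Gamma_{x_j}^z\grad f(x_j)}{\log_z z - \log_z x_j}_z$ and $\innerprod{\Gamma_{x_i}^z\grad f(x_i)}{\log_z z - \log_z x_i}_z$, and to combine them with the smoothness estimate (which naturally lives at $x_j$, or at $z$ depending on how the descent lemma is applied) I need the difference $\log_z x_i - \log_z x_j$ to match, after transport to $T_{x_j}N$, a multiple of $\Gamma_{x_i}^{x_j}\grad f(x_i) - \grad f(x_j)$; in Euclidean space this is an exact identity because $z$, $x_i$, $x_j$ are related by a linear (gradient-step) relation, but on the manifold $z = \exp_{x_j}(-\tfrac1L(\grad f(x_j)-\Gamma_{x_i}^{x_j}\grad f(x_i)))$ only controls $\log_{x_j}z$, not $\log_z x_i$ or $\log_z x_j$ directly. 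The resolution is precisely what generalized geodesic convexity buys us: because \eqref{eq:ggc} is stated with the initial velocity measured in $T_zN$ via $\log_z y - \log_z x$, the ``missing'' nonlinear identity is replaced by the inequality \eqref{eq:ggc} itself, so one never needs the exact relation — one only needs \eqref{eq:ggc} with base $z$ applied to both pairs. I would therefore be careful to invoke \eqref{eq:ggc} with exactly this base point $z$ and not attempt to reconstruct a Euclidean-style equality. The non-negative curvature assumption enters, if at all, only to guarantee that the relevant logarithmic maps are well-defined and single-valued on the geodesically convex $N$ (Assumption~\ref{assumption:common}), so that all the objects written above make sense; no curvature comparison inequality should be needed for this particular proposition, since the hypotheses have already packaged the hard geometric content into \eqref{eq:ggc} and the assumption $z\in N$.
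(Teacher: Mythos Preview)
There is a genuine gap in your choice of base point for \eqref{eq:ggc}. You correctly identify the obstacle --- that $\log_z x_i$ is uncontrolled because the construction of $z$ only pins down $\log_{x_j}z$ --- but your proposed resolution does not work: applying \eqref{eq:ggc} with base $z$ to the pair $(x_i,z)$ still leaves $\log_z x_i$ explicitly on the right-hand side, so nothing has been gained. Worse, that application gives $f(z)\ge f(x_i)+(\cdots)$, which combined with your descent upper bound on $f(z)$ produces an \emph{upper} bound on $f(x_i)$, the wrong direction for $Q_{ij}\ge 0$. Your Step~1 is also redundant: \eqref{eq:ggc} with base $z$ on the pair $(x_j,z)$ collapses, after transporting back along the $x_j$--$z$ geodesic, to ordinary geodesic convexity. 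And a minor arithmetic slip: the descent-lemma right-hand side is $f(x_j)-\tfrac{1}{L}\innerprod{\grad f(x_j)}{\grad f(x_j)-\Gamma_{x_i}^{x_j}\grad f(x_i)}_{x_j}+\tfrac{1}{2L}\norm{\grad f(x_j)-\Gamma_{x_i}^{x_j}\grad f(x_i)}_{x_j}^2$, not $f(x_j)-\tfrac{1}{2L}\norm{\cdots}^2$; the extra cross term only cancels against the GGC contribution.

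The paper's argument uses base $y=x_j$, not base $z$, and is shorter than your outline. Write $f(x_i)-f(x_j)=[f(x_i)-f(z)]+[f(z)-f(x_j)]$. For the first bracket, apply \eqref{eq:ggc} \emph{once}, with base $x_j$, to the pair $(x_i,z)$: this gives $f(x_i)-f(z)\le -\innerprod{\Gamma_{x_i}^{x_j}\grad f(x_i)}{\log_{x_j}z-\log_{x_j}x_i}_{x_j}$, where every term lives in $T_{x_j}N$ and $\log_{x_j}z=-\tfrac{1}{L}\bigl(\grad f(x_j)-\Gamma_{x_i}^{x_j}\grad f(x_i)\bigr)$ is explicit by construction. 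For the second bracket, the descent lemma gives $f(z)-f(x_j)\le \innerprod{\grad f(x_j)}{\log_{x_j}z}_{x_j}+\tfrac{L}{2}\norm{\log_{x_j}z}_{x_j}^2$. Substituting the known $\log_{x_j}z$ into the sum and simplifying yields $f(x_j)-f(x_i)\ge \innerprod{\grad f(x_i)}{\log_{x_i}x_j}_{x_i}+\tfrac{1}{2L}\norm{\Gamma_{x_i}^{x_j}\grad f(x_i)-\grad f(x_j)}_{x_j}^2$, i.e.\ $Q_{ji}\ge 0$; since the hypothesis on $z$ is stated for all $x,y\in N$, swapping roles gives $Q_{ij}\ge 0$ as well. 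The point is that GGC is needed precisely so that the convexity inequality between $x_i$ and $z$ can be written in $T_{x_j}N$ --- the one tangent space in which $\log\,z$ is explicit.
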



We provide the proof in Appendix~\ref{appendix:proof_of_proposition_inequaltiy}. The condition $z \in N$ is technical and generally requires case-specific verification. We would like to emphasize that by the same proof strategy as Proposition~\ref{prop:generalized_geodesic_and_f_ineq_mainbody_one}, one can obtain a Riemannian analogue of the \emph{co-coercivity} type inequality \citep[Theorem 2.1.5]{nesterov2014introductory}, a fundamental property of convex $L$-smooth functions in Euclidean space. This holds when $\vt$-geodesic $L$-smoothness is replaced by standard geodesic smoothness; see Appendix~\ref{appendix:additional_discussions_on_co_coercivity}.

At first glance, \eqref{eq:f_inequality_condition} and the corresponding Proposition~\ref{prop:generalized_geodesic_and_f_ineq_mainbody_one} may appear to be merely a technical assumption imposed for the convenience of the proof, without any further implications. However, we will later demonstrate in Section~\ref{section:application} that \eqref{eq:f_inequality_condition} is not artificially introduced; in particular, when $\vt = \Gamma$, it admits meaningful applications. In this regard, our notion of $\vt$-geodesic convexity and smoothness is not merely from technical convenience; rather, it can be regarded as the generalization of existing concepts and allows the practical applications.

\section{Main Results}\label{section:main_result}
In this section, we present our main convergence results for silver stepsize acceleration on Riemannian manifolds. 

\paragraph{Proposed Algorithm: VTRGD}

While classical RGDs are defined by the exponential map between consecutive updates as in \eqref{eq:riem_gradient_basic}, such construction turned out to be incapable to achieve the convergence for dynamic stepsize methods like silver stepsize. 
The main reason is because dynamic stepsize methods require the relationship between \emph{non-consecutive} itereates, which is not feasible in standard RGD \eqref{eq:riem_gradient_basic}. 
To overcome this bottleneck, we propose \emph{Vector Transported RGD (VTRGD)}. VTRGD is defined as follows. First, choose a vector transport $\vt$. Next, choose an arbitrary base point $b \in M$ which satisfies \eqref{eq:f_inequality_condition} (e.g., a base $b$ that makes $f$ $\vt$-geodesically $L$-smooth with base $b$).
Then, VTRGD makes the following update:
\begin{align}\label{eq:VTRGD}
    x_{n+1} = \exp_{b}\left(\log_{b}x_n - \frac{\eta_n}{L} \vt_{x_n}^{b}\grad f(x_n)\right).
\end{align}
Since we assumed that the exponential map, logarithmic map, and Riemannian gradients are tractable in Assumption~\ref{assumption:common}, \eqref{eq:VTRGD} provides a tractable algorithm whenever $\vt$ is tractable. In particular, we focus on the case when $\eta_n$ is the silver stepsize. The below theorem is our main theorem, showing that silver stepsize VTRGD achieves the accelerated convergence rate.

\begin{thm}\label{thm:silver_stepsize_riemannian_general}
    Let Assumption \ref{assumption:common}. \ref{assumption:for_silver_stepsize} be true and $n = 2^k - 1$. Then, for VTRGD \eqref{eq:VTRGD} with silver stepsizes $\eta_n/L$ \eqref{eq:silverstep}, we have,
    \begin{align*}
        f(x_n) - f(x_*) \leq r_k L \norm{\log_{b} x_0 - \log_{b}x_*}_b^2,\qquad r_k =\left(1 + \sqrt{4\rho^{2k} - 3}\right)^{-1}.
    \end{align*}
\end{thm}
We provide the full proof in Appendix~\ref{appendix:deferred_proof_section_riem_silver}.
Since $r_k \asymp n^{-\log_{2}\rho} \approx n^{-1.2716}$, Theorem~\ref{thm:silver_stepsize_riemannian_general} shows an accelerated convergence rate than constant stepsize RGD on Riemannian manifolds, which is $O(n^{-1})$ \cite[Appendix D]{kim22riem_accel}. For $n \neq 2^k-1$, the error may oscillate and not always remain in a low regime. However, the constraint $n = 2^k-1$ is not a practical issue, since the number of iterations can be freely chosen. Moreover, while the theorem is formally stated for $n = 2^k-1$, the oscillation happens when we encounter the huge stepsize, which is known a priori. Thus, as long as the iteration count avoids these spike points (specifically, when $n = 2^k$), the iterates remain stable. Our numerical experiments confirm this behavior; see Appendix~\ref{appendix:additional_experiment}. Finally, one may also consider optimizing long step-sizes for arbitrary $n$ \cite{grimmer2024composingoptimizedstepsizeschedules}, though we do not pursue this direction here.

\paragraph{Strongly convex case}

While the silver stepsize method allows the strong convexity variant \citep{altschuler2024accelerationI}, it relies heavily on the use of the interpolating inequality \citep[Eqaution (2.6)]{altschuler2024accelerationI} which does not have the direct interpretation in Riemannian setting, as well as suffers the saturation (non-accelerating) regime after the certain number of the iterates.

On the other hand, we found out that silver stepsize VTRGD can be extended to geodesically strongly convex functionals\footnote{\label{strong_convexity}Note that we use standard geodesic strong convexity here, not the $\vt$-variant. Since $\vt$-geodesic $\alpha$-strong convexity implies geodesic $\alpha$-strong convexity, our assumption here is indeed the weaker one.}, by the use of well-known restarting method \cite{odonoghue2012adaptiverestartacceleratedgradient}. The method proceeds as follows:

\begin{enumerate}
\item Perform $m$ steps of gradient descent starting from an initial point $x_0$ to obtain $x_m$.
\item Restart from $x_m$ with the stepsize reset to $\eta_0$, and run $m$ additional steps to obtain $x_{2m}$.
\item Repeat this process $\ell$ times, each time restarting from the most recent iterate with the stepsize reset to $\eta_0$. The total number of iteration will be $n = m\ell$.
\end{enumerate}

For fixed $n$, choosing $m$ and $\ell$ appropriately yields the optimal convergence rate for strongly convex objectives. Notably, this approach remains valid in the Riemannian setting with silver stepsize VTRGD. 

\begin{thm}\label{thm:riem_silver_strong_convex_smooth}
    Consider the same setting of Theorem \ref{thm:silver_stepsize_riemannian_general}. In addition, let $f$ be geodesically $\alpha$-strongly convex\footnotemark[\value{footnote}] with the condition number $\kappa := L/\alpha$. Set $k^* =  \ceil{\log_{\rho} \kappa} + 1$. For any $\ell \in \bbN$, consider the above restarting scheme with $m = 2^{k^*} - 1$, so that the total number of iteration is $n = \ell(2^{k^*} - 1)$. Then, for any $\ell \in \bbN$ and $n = \ell(2^{k^*} - 1)$,
  \begin{align*}
      d^2(x_n, x_*) \leq \exp\left(-\log(\rho/2)n/ \kappa^{\log_{\rho}2}\right)\norm{\log_{b} x_0 - \log_{b}x_*}_b^2
  \end{align*} 
  In particular, the algorithm finds an $\epsilon$-approximate solution, \ie $d(x_n, x_*)^2 \leq \epsilon$, in $O\left(\kappa^{\log_{\rho} 2} \log(1/\epsilon)\right)$ number of iterations.
\end{thm}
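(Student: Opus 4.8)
The plan is to treat the restarting scheme as $\ell$ independent runs of silver step-size RGD, each of length $m := 2^{k^*}-1$, and to chain the per-run guarantee of Theorem~\ref{thm:silver_step-size_riemannian_general} using strong convexity as the bridge between function-value suboptimality and squared distance. Write $y_0 = x_0$ and let $y_j$ be the iterate obtained after the $j$-th restart epoch, so that $x_n = y_\ell$ and $n = \ell m$. Since each epoch begins afresh with step-size $\eta_0/L$ and the hypotheses of Theorem~\ref{thm:silver_step-size_riemannian_general} hold along each epoch's trajectory (Assumptions~\ref{assumption:common} and~\ref{assumption:for_silver_step-size}, with the same minimizer $x_*$), that theorem applied to epoch $j$ with initial point $y_{j-1}$ and $k = k^*$ gives $f(y_j) - f(x_*) \le r_{k^*} L\, d^2(y_{j-1}, x_*)$, where $r_{k^*} = (1+\sqrt{4\rho^{2k^*}-3})^{-1}$.

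Next I would invoke geodesic $\alpha$-strong convexity at the minimizer: since $N$ is open, $x_*$ is an interior minimizer and $\grad f(x_*) = 0$, so evaluating the defining inequality with $x = x_*$, $y = y_j$ yields $\tfrac{\alpha}{2} d^2(y_j,x_*) \le f(y_j) - f(x_*)$. Combining with the previous bound gives the one-step contraction $d^2(y_j,x_*) \le 2\kappa\, r_{k^*}\, d^2(y_{j-1},x_*)$, and iterating over the $\ell$ epochs yields $d^2(x_n,x_*) \le (2\kappa r_{k^*})^{\ell}\, d^2(x_0,x_*)$.

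The remainder is purely arithmetic, verifying that the choice $k^* = \lceil\log_\rho\kappa\rceil + 1$ makes the constants close. Since $k^* \ge \log_\rho\kappa + 1$ we get $\rho^{k^*} \ge \rho\kappa$, hence $r_{k^*} \le (1+\sqrt{4\rho^2\kappa^2-3})^{-1}$; the inequality $2\kappa\rho \le 1 + \sqrt{4\rho^2\kappa^2-3}$ reduces, after squaring the positive quantity $2\kappa\rho - 1$, to $\kappa\rho \ge 1$, which holds because $\kappa \ge 1$. Therefore $2\kappa r_{k^*} \le 1/\rho$, so $d^2(x_n,x_*) \le \rho^{-\ell}\, d^2(x_0,x_*) = \exp(-n\ln\rho/m)\, d^2(x_0,x_*)$. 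On the other hand $k^* \le \log_\rho\kappa + 2$ gives $m < 2^{k^*} \le 4\cdot 2^{\log_\rho\kappa} = 4\kappa^{\log_\rho 2}$, so the exponent is at most $-\tfrac{\ln\rho}{4}\cdot\tfrac{n}{\kappa^{\log_\rho 2}}$. Finally $\tfrac14\ln\rho \ge \ln(\rho/2)$ is equivalent to $\rho^{3}\le 16$, and $\rho^3 = (1+\sqrt2)^3 = 7 + 5\sqrt2 \approx 14.07 < 16$, which weakens the bound to the stated $\exp(-\log(\rho/2)\,n/\kappa^{\log_\rho 2})\, d^2(x_0,x_*)$. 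Solving $\exp(-\log(\rho/2)\,n/\kappa^{\log_\rho 2})\,d^2(x_0,x_*) \le \epsilon$ for $n$ then gives the claimed $O(\kappa^{\log_\rho 2}\log(1/\epsilon))$ iteration complexity.

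I do not anticipate a genuine obstacle: Theorem~\ref{thm:silver_step-size_riemannian_general} already absorbs all the Riemannian-geometric difficulty, and restarting contributes nothing beyond the strong-convexity sandwich, which is identical to the Euclidean argument. The only delicate point is the numerology — specifically that the ``$+1$'' in $k^*$ is exactly what forces $2\kappa r_{k^*}\le 1/\rho$, and that the leftover factor of $4$ in $m$ is tolerated only because the silver ratio satisfies $\rho^3 < 16$. I would also double-check that Assumption~\ref{assumption:for_silver_step-size}(2) is meant to cover the full restarted trajectory, since each epoch must keep its iterates inside $N$ for Theorem~\ref{thm:silver_step-size_riemannian_general} to be applicable epoch by epoch.
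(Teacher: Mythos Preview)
Your argument is correct and follows the same restarting strategy as the paper: convert the per-epoch function-value bound from Theorem~\ref{thm:silver_step-size_riemannian_general} into a squared-distance contraction via strong convexity, then iterate $\ell$ times. The only substantive difference is in the arithmetic. You establish the sharper per-epoch contraction $2\kappa r_{k^*}\le 1/\rho$ (via $1+\sqrt{4\rho^2\kappa^2-3}\ge 2\kappa\rho$), whereas the paper bounds $1+\sqrt{4\rho^{2k^*}-3}\ge \rho^{k^*}+1\ge\rho\kappa$ and obtains only $2\kappa r_{k^*}\le 2/\rho$. Your tighter factor is exactly what makes the final step work: after bounding $m<4\kappa^{\log_\rho 2}$ you absorb the extra factor of $4$ using $\rho^3<16$. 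By contrast, the paper's passage from the $2/\rho$ contraction to the stated exponent $\exp(-\log(\rho/2)\,n/\kappa^{\log_\rho 2})$ implicitly requires $2^{k^*}-1\le\kappa^{\log_\rho 2}$, which fails for all $\kappa\ge\rho$ (since then $2^{k^*}-1\ge 2^{\log_\rho\kappa+1}-1>2^{\log_\rho\kappa}=\kappa^{\log_\rho 2}$). So your more careful numerology actually repairs a small gap in the published argument while keeping the overall approach identical.
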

We provide the proof in Appendix~\ref{appendix:restarting}. Since constant stepsize RGD finds an $\epsilon$-approximate solution for strongly convex objectives in $O(\kappa \log(1/\epsilon))$ iterations \cite[Appendix D]{kim22riem_accel}, our algorithm achieves an improved rate in the strongly convex setting as well. While the algorithm requires to pick the certain number of iterations, again in practice this is not problematic as one can freely choose the number of iterates. 
Also, the restarting method avoids suffering saturation regime unlike \cite{altschuler2024accelerationI}. 


We conclude this section with a remark. Although the convergence rates of our VTRGD algorithms in Theorems~\ref{thm:silver_stepsize_riemannian_general} and \ref{thm:riem_silver_strong_convex_smooth} are independent of the choice of the base point $b$, the choice of $b$ still influences the algorithm’s performance through the constant factor.

\section{Applications}\label{section:application}

At first glance, \eqref{eq:f_inequality_condition} and \eqref{eq:VTRGD} may appear to be merely a technical assumptions and consequences imposed for the convenience of the proof, without any further implications. However, in this section, we demonstrate that the scheme actually turned out to reduce to the standard Riemannian gradient descent in 2-Wasserstein space, under the canonical choice of $\vt$.


\subsection{Optimization on the 2-Wasserstein Space}\label{section:application_wasserstein} 

A key advantage of our algorithm over existing methods is that one can achieve the acceleration without curvature bounds. This feature makes our analysis particularly suitable for the 2-Wasserstein space, which admits a Riemannian structure but lacks an upper curvature bound (see Lemmas~\ref{thm_wasser_curv} and \ref{thm_bw_sectional_curvature}). However, while the acceleration has been studied in the continuous-time setting \cite{carrillo2018damped, wang2022accelerated}, no discrete-time algorithm with provable acceleration guarantee was previously available. To the best of our knowledge, our method provides the first theoretically guaranteed accelerated algorithm in the 2-Wasserstein space for a widely used family of functionals.


We briefly introduce the 2-Wasserstein geometry (see Appendix \ref{appendix:wasser_geom} for details). Let $\calP_{2,ac}(\bbR^d)$ denote the set of probability measures on $\bbR^d$ with finite second moments and absolutely continuous with respect to the Lebesgue measure, $\calL^2(\mu)$ be the space of square-integrable functions from $\bbR^d \to \bbR^d$ under $\mu\in \calP_{2,ac}(\bbR^d)$, and $T_{\#\mu}$ denotes a pushforward of $\mu$ by $T$. For any $\mu, \nu \in \calP_{2,ac}(\bbR^d)$, the 2-Wasserstein metric is defined as:
\begin{align*}
    W_2^2(\mu, \nu) := \min_{T \in \calL^2(\mu) \text{ s.t. } T_{\# \mu} = \nu} \bbE_{x \sim \mu}\left[\norm{T(x) - x}^2\right].\numberthis\label{eq:w2def}
\end{align*}
The well-definedness (precisely, the existence of the minimum $T$) is guaranteed by Brenier Theorem \cite{brenier1991PolarFA}. The map $T_{\mu, \nu}$ achieving the minimum in \eqref{eq:w2def} is called an \emph{optimal transport map} from $\mu$ to $\nu$. The metric space $(\calP_{2,ac}(\bbR^d), W_2)$, called the 2-Wasserstein space, admits a Riemannian structure with tangent space $T_{\mu} \calP_{2,ac}(\bbR^d) \subset \calL^2(\mu)$ and the Riemannian metric given by the $\calL^2(\mu)$ inner product. The exponential map is defined by $\exp_{\mu}(v) = (id + v)_{\# \mu}$, and the logarithmic map is defined by $\log_{\mu} \nu = T_{\mu, \nu} - id$. The Wasserstein gradient at $\mu$ is defined as the operator satisfying $\partial_t\vert_{t=0} \calF(\mu_t) = \innerprod{\wgrad \calF(\mu)}{v_0}$, where $\mu_t$ is any sufficiently regular curve of measures with $\mu_0 = \mu$ and $v_t \in \calL^2(\mu_t)$ is the velocity vector satisfying the continuity equation $\partial_t \mu_t + \mathrm{div}(\mu_t v_t) = 0$. This definition is the exact analogy of the definition of Riemannian gradient that is defined by $d_vf(x) = \innerprod{\grad f(x)}{v}$.

A natural vector transport in 2-Wasserstein space is a transport map, \ie $\vt_{\mu}^{\nu} = T_{\nu, \mu}$, where $T_{\nu, \mu}$ is a map satisfying $(T_{\nu,\mu})_{\#\nu} = \mu$. This is in fact an \emph{un-projected} parallel transport in 2-Wasserstein space. In particular, when one uses the \emph{optimal} transport map, Definition~\ref{defn:generalized_geodesic_convexity}, \ref{defn:vt_l_smooth} becomes generalized geodesic convexity (\resp smoothness) in 2-Wasserstein space \citep{ambrosio2008gradientflow, santambrogio2014introduction, salim2020wasserstein}. 

Furthermore, our VTRGD recovers the standard Wasserstein gradient descent (WGD) in this case, since for all $b \in \calP_{2,ac}(\bbR^d)$ and any transport map $T_{b,\mu_n}$
\begin{align}\label{eq:wasserstein_silver_gradient}
    \mu_{n+1} = (T_{b, \mu_n} - \frac{\eta_n}{L} \wgrad \calF(\mu_n) \circ T_{b, \mu_n})_{\# b} = (id - \frac{\eta_n}{L} \wgrad \calF(\mu_n))_{\#\mu_n}.
\end{align}

Hence, in 2-Wasserstein space \emph{silver stepsize Wasserstein gradient descent} (without any base point) guarantees the accelerated convergence, analogous to Theorem~\ref{thm:silver_stepsize_riemannian_general}, and \ref{thm:riem_silver_strong_convex_smooth}. 
\begin{figure*}[t]
    \centering
    \includegraphics[width=0.25\textwidth]{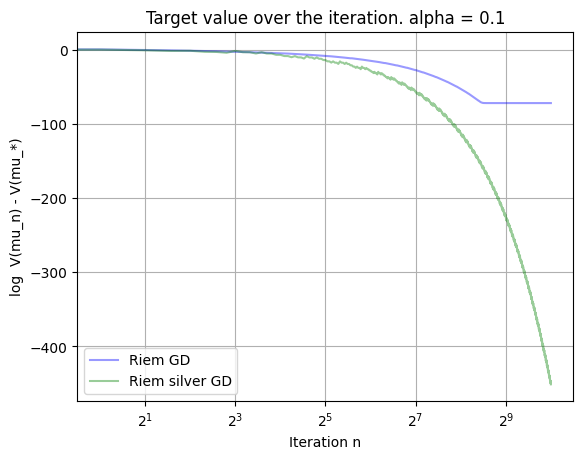}\hfill
    \includegraphics[width=0.25\textwidth]{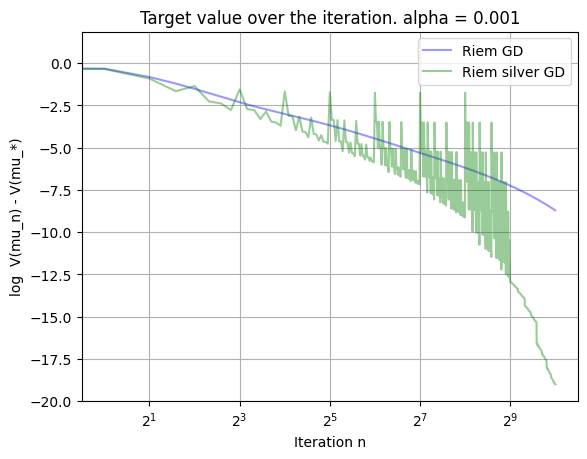}\hfill
    \includegraphics[width=0.25\textwidth]{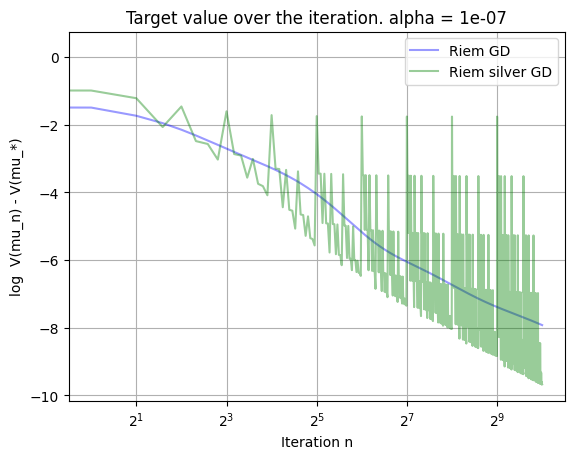}\hfill
    \includegraphics[width=0.25\textwidth]{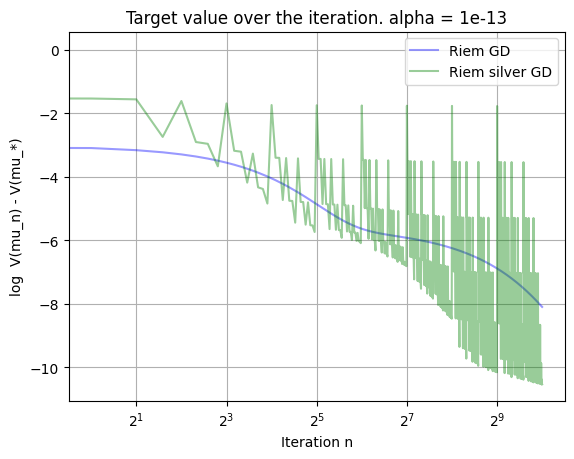}
    \caption{Comparison between silver stepsize method and RGD for potential functional optimization in $BW(\bbR^d)$, with different convexity parameters. We set $\ell = 2^{\floor{\log_2\left(\frac{2^{10} - 1}{2^{k^*} - 1}\right)}}$ and $n = \ell(2^{k^*} - 1)$, where $k^*$ being the optimal sub-iterate derived in Theorem~\ref{thm:riem_silver_strong_convex_smooth}. \textbf{Columns}: From left to right, each column corresponds to $\kappa = 10^{1}, 10^{3}, 10^{7}, 10^{13}$.
    }\label{figure:wasser_silver}
\end{figure*}

\begin{cor}[Acceleration by silver stepsize WGD]\label{thm:silver_stepsize_wasserstein_gradient}
    Let $\calF: \calP_{2,ac}(\bbR^d) \to \bbR$ to be a functional on 2-Wasserstein space. Set $n = 2^k - 1$, and let $\mu_n$ be a WGD update \eqref{eq:wasserstein_silver_gradient} with $\eta_n$ being the silver stepsize.
    If $\calF$ satisfies \eqref{eq:f_inequality_condition} with some $b \in \calP_{2,ac}(\bbR^d)$ and $\vt_{b}^{x_i} = T_{b, \mu_i}$, and all $\mu_i$  and $(id - \frac{1}{L} \left(\wgrad \calF(\mu_i) - \wgrad \calF(\mu_j) \circ T_{\mu_i, \mu_j} \right)_{\#\mu_i}$ admit the densities, we get
    \begin{align*}
        \calF(\mu_{n}) - \calF(\mu_*) \leq r_{k} L \norm{T_{b,\mu_0} - T_{b,\mu_*}}_{b}^2.
    \end{align*}
    Suppose $\calF$ is, in addition, geodesically $\alpha$-strongly convex with the condition number $\kappa = L/\alpha$. Let $k^* =  \ceil{\log_{\rho} \kappa} + 1$. Then by restarting silver stepsize WGD every $(2^{k^*} - 1)$ steps, one obtains for all $n$ of the form  $\ell(2^{k^*} - 1)$ with $\ell \in \bbN$,
    \begin{align*}
        W_2^2(\mu_n, \mu_*) \leq C_{\kappa}\exp\left(-\log(\rho/2)n/ \kappa^{\log_{\rho}2}\right) \norm{T_{b,\mu_0} - T_{b,\mu_*}}_{b}^2.
    \end{align*}
    Again, the algorithm finds an $\epsilon$-approximate solution in $O\left(\kappa^{\log_{\rho} 2} \log(1/\epsilon)\right)$ number of iterations.
\end{cor}

In particular, when the condition \eqref{eq:f_inequality_condition} holds for all $b \in \calP_{2,ac}(\bbR^d)$, one can substitute the left hand sides $\norm{T_{b,\mu_0} - T_{b,\mu_*}}_{b}^2$ to $\min_b \norm{T_{b,\mu_0} - T_{b,\mu_*}}_{b}^2 = W_2^2(\mu_0,\mu_*)$. While Corollary~\ref{thm:silver_stepsize_wasserstein_gradient} may appear to be a straightforward application of our main theorems, it in fact requires careful attention to the underlying geometry, since the 2-Wasserstein space is not \emph{geodesically complete} \citep{panaretos2020wasserstein}. This obstacle can nevertheless be resolved by exploiting specific properties of the 2-Wasserstein geometry. The proof is deferred to Appendix~\ref{appendix:deferred_proof_section_application}.

\begin{rmk}[No optimality is needed]
    In classical Wasserstein gradient descent, one typically needs to impose the condition $id - \frac{\eta_n}{L} \wgrad \calF(\mu_n)$ to be convex, which restricts the size of steps to be less or equal to $1/L$; the convexity condition is essential for the gradient update to be the \emph{optimal} transport map. However, our proof does not use the optimality of the update step. Therefore, we do not require the map $id - \frac{\eta_n}{L} \wgrad \calF(\mu_n)$ to be convex, and it justifies the choice of large stepsizes.
\end{rmk}

\begin{rmk}[Simplification the regularity condition]\label{rmk:regularity_is_not_restrictive}
    The regularity condition--existence of densities among iterates--can be simplified as follows. If the transport map $T_{\mu_i,\mu_j} \in C^{1,1}_{loc}(\bbR^d)$, and $\wgrad \calF(\mu) = \nabla h$ for some $h \in C_{loc}^{1,1}(\bbR^d)$, which is in fact the case for many of Wasserstein gradient algorithms, then the regularity condition boils down to $I - \frac{\eta_n}{L}\nabla \wgrad \calF(\mu)$ and $I - (\frac{1}{L}\nabla \wgrad \calF(\mu_i) - \nabla \wgrad \calF(\mu_j) \circ T_{\mu_i,\mu_j})$ being invertible. See Appendix~\ref{appendix:proof_of_rmk_regularity} for the detail. 
\end{rmk}

Motivated by Corollary~\ref{thm:silver_stepsize_wasserstein_gradient}, we provide the numerical experiments on WGD. we set $N$ to be the Bures-Wasserstein space $BW(\bbR^d)$, the space of non-singular Gaussian distributions in $\bbR^d$ equipped with Wasserstein geometry. This set is a geodesically convex subset of $\calP_{2,ac}(\bbR^d)$. Moreover, $BW(\bbR^d)$ can be identified with a product Riemannian manifold of mean vectors and covariance matrices, \ie $\bbR^d \times \SPD(d)$ where $\SPD(d)$ denotes the space of symmetric positive definite matrices of $\bbR^{d \times d}$. Then, \eqref{eq:wasserstein_silver_gradient} becomes
\begin{align}\label{eq:bwasserstein_silver_gradient}
    (m_{n+1}, \Sigma_{n+1}) &= \exp_{(m_n, \Sigma_n)} \left(-\frac{\eta_n}{L} \bwgrad \calF(m_n, \Sigma_n)\right)
\end{align}
with $\exp_{(m_n, \Sigma_n)}(\cdot)$ and Bures-Wasserstein gradient $\bwgrad \calF(m_n, \Sigma_n)$ defined in Definition \ref{defn_bw_explog}, \ref{defn:bw_grad}. 
We introduce more detail of $BW(\bbR^d)$ geometry in Appendix \ref{appendix_BW_geom}. As our objective functional, we consider an important functional on this space, the \emph{potential energy functional}:
\[
\calV(\mu):= \bbE_{x \sim \mu}[V(x)]
\]
where $V:\bbR^d \to \bbR$. Such functional appears frequently, e.g. variational inference. Using the explicit formula of $\bwgrad \calV(m, \Sigma)$ \cite{diao2023forwardbackward} in \eqref{eq:bwasserstein_silver_gradient}, we obtain the following silver stepsize RGD in $BW(\bbR^d)$ for $\calV(\mu)$ \eqref{eq:bwasserstein_silver_gradient_potential}:
\begin{align}\label{eq:bwasserstein_silver_gradient_potential}
\begin{split}
    m_{n+1} &= m_{n} - \frac{\eta_n}{L} \bbE_{X \sim N(m_n, \Sigma_n)}[\nabla V(X)],\\
    \Sigma_{n+1} &= (I - \frac{\eta_n}{L} \bbE_{X \sim N(m_n, \Sigma_n)}[\nabla^2 V(X)]) \Sigma_{n} (I - \frac{\eta_n}{L} \bbE_{X \sim N(m_n, \Sigma_n)}[\nabla^2 V(X)]).
\end{split}
\end{align}
The following proposition indicates that the potential functional satisfies the conditions required for Corollary~\ref{thm:silver_stepsize_wasserstein_gradient} whenever $V$ is convex and $L$-smooth.

\begin{prop}\label{prop:potential_function_satisfies_f_inequality}
    If $V$ is convex and $L$-smooth in $\bbR^d$, then $\calV$ satisfies \eqref{eq:f_inequality_condition} with any $b \in \calP_{2,ac}(\bbR^d)$ and any transport map $\vt_{\mu}^b = T_{b, \mu_i}$ under both the Wasserstein and Bures-Wasserstein geometries. In particular, if one updates using \eqref{eq:wasserstein_silver_gradient} or \eqref{eq:bwasserstein_silver_gradient_potential}, one has
    \begin{align}\label{eq:conv_wasser_potential}
        \calV(\mu_n) - \calV(\mu_*) &\leq r_k L W_2^2(\mu_0, \mu_*).
    \end{align}
\end{prop} 
We provide the proof in Appendix~\ref{pf:potential_function_satisfies_f_inequality}.

By Corollary~\ref{thm:silver_stepsize_wasserstein_gradient}, the silver stepsize WGD \eqref{eq:bwasserstein_silver_gradient_potential} achieves the accelerated convergence rate whenever $I - \eta_n \bbE[\nabla^2 V(X)]/L$ is invertible. For our experiment, we considered quadratic $V(x) = \frac{1}{2}(x-m_*)^T \Sigma_*^{-1}(x-m_*)$ defined on $\bbR^{10}$, with $m_*, \Sigma_*$ being a randomly generated vector and symmetric positive definite matrix respectively. Since $V$ is a strongly-convex quadratic function, by Proposition~\ref{prop:potential_function_satisfies_f_inequality} $\calV$ is generalized geodesically $\alpha$-strongly convex and geodesically $L$-smooth with $L = 1/\lambda_{\min}(\Sigma_*)$ and $\alpha = 1/\lambda_{\max}(\Sigma_*)$. To study the effect of the condition number $\kappa=L/\alpha$, we fix $L = 1$, and vary $\alpha$. Small $\alpha$ corresponds to convex case, and larger $\alpha$ stands for the strongly convex case. We choose $1/L$ as the stepsize for constant stepsize WGD \cite{zhang2016first, kim22riem_accel}. 
Figure \ref{figure:wasser_silver} shows that the silver stepsize WGD outperforms constant stepsize WGD in both convex and strongly convex case. We provide further implementation detail (e.g., the specific distributions of $m_*$ and $\Sigma_*$) and additional experiments under various settings (e.g., different random seeds, number of iterations, and comparisons with various constant stepsizes) in Appendix~\ref{appendix:additional_experiment}. 

Even in the absence of convexity and smoothness guarantees, we observed that our algorithm yields numerical improvements for other optimization problems in the 2-Wasserstein space. Specifically, we present additional experiments on mean-field training of neural networks in Appendix~\ref{appendix:mean_field}.

\subsection{Optimization on Symmetric Positive definite matrices}

While our main motivation is 2-Wasserstein space, we also provide the numerical experiments for the optimization problem in symmetric positive definite matrix ($\SPD$) with $\vt = \Gamma$ again. The tangent space is $\Sym(d)$, the space of symmetric matrices of $\bbR^{d \times d}$. On this space, there is a natural choice of the metric, called \emph{affine invariant metric}. The metric is defined by:
\[
d_{AI}(A,B) := \norm{\log A^{-1/2}BA^{-1/2}}_F, \quad \innerprod{S}{R}_{A} = \tr(A^{-1}SA^{-1}R).
\]
This metric coincides with the Fisher-Rao metric between multivariate Gaussian distributions with fixed mean and covariance matrices $A$ and $B$ \citep{nielsen2023asimpleapproximation}. Additionally, the Fr\'echet mean of SPD matrices with respect to $d_{AI}$ coincides with the geometric mean and plays an important role in many applications, such as diffusion tensor imaging \citep{fillard2005extrapolation, pennec2005ARF, bhatia2006riemannian, nguyen2022gyrovector, kim2025robust}. This metric induces the complete non-positively curved manifold on $\SPD(d)$ with the curvature bound $[-1/2, 0]$ \citep{criscitiello2023accelerated}.

While there is a problem setup which fully satisfies our assumption on this space (see Proposition~\ref{prop:ggc_lgs_of_entropy_ai}), to validate the wide applicability of our method we provide experiments on the representative benchmark problem which does not satisfy our assumption. In addition, we provide more experiments on various setup on $\SPD(d)$ in Appendix~\ref{appendix:additional_experiment_in_SPD}, including the setup which satisfies our assumption.

\paragraph{Linear minus log-determinant}
The linear minus log-determinant is widely studied problem, as it can be considered as a generalization of linear semidefinite programming (SDP) problems as well as the log-likelihood of Wishart distribution \citep{BoydVandenberghe2004ConvexOptimization, wang2010solving, han2021riemannian, malitsky2024adaptive}. The problem is formulated as follows: for $C \in \SPD(d)$, one optimizes the function 
\[
\min_{X \in \SPD(d)} f(X):= \tr(CX) - \lambda \log\det X.
\]
Unfortunately, this problem do not satisfy our desired assumption, precisely $\Gamma$-geodesic convexity, as well as $L$-smoothness. In fact, this problem is not even geodesically $L$-smooth, hence standard RGD also does not have the theoretical guarantee. In practice, however, standard RGD methods have shown strong empirical performance even in the absence of such theoretical guarantee \citep{han2021riemannian}. In this spirit, we provide numerical experiments on this problem. We compared the numerical performance of standard RGD and our VTRGD \eqref{eq:VTRGD} on this problem with $\lambda = 1$, and with different choice of $C$, for small condition number $\kappa = 10$ and large condition number $\kappa = 10^5$, which is the setup similar to \cite{han2021riemannian}. We set $d = 50$. Since the problem is not geodesically $L$-smooth, one has to choose $L$. We set $L = 2$ which is the value guarantees the stability of both algorithms. For VTRGD with silver stepsize \eqref{eq:VTRGD}, we set our base point $b = I$. The results are aggregated in Figure~\ref{figure:spd_linear_log}.

\begin{figure*}[t]
    \centering
    \includegraphics[width=0.25\textwidth]{figures/spd/linear_log/small_condi_minus_min.png}~~~~~\qquad
    \includegraphics[width=0.25\textwidth]{figures/spd/linear_log/large_condi_minus_min.png}
    
    \caption{Optimization of $f(X) = \tr(CX) - \log \det X$, with the different condition number on $C$. We set $b =  I$ for VTRGD \eqref{eq:VTRGD}. We set $L = 2$ for both experiments. We plot $f - f_{\min}$, where $f_{\min}$ is the minimum value over all experiments. \textbf{Left:} $\kappa = 10$.  \textbf{Right:} $\kappa = 10^5$.}\label{figure:spd_linear_log}
\end{figure*}
\vspace{-0.05in}

\section{Conclusion}

In this work, we provide the new concept, namely vector transported geodesic convexity and smoothness. Based on these new concepts, we propose vector transported Riemannian gradient descent (VTRGD) method \eqref{eq:VTRGD}, which enables silver stepsize acceleration to be feasible for the functions satisfying the $\vt$-geodesic convexity and $\vt$-geodesic smoothness with a base. Albeit under a different notion of convexity, our algorithm is the first tractable accelerated algorithm in Riemannian manifolds, without imposing the curvature assumption or diameter assumption.

As our core application, in 2-Wasserstein space VTRGD coincides with standard gradient descent in 2-Wasserstein space. Particularly, our framework yields the acceleration for potential energy functional optimization problem, which is the functional frequently appears in optimal transport literature. 



We conclude the paper with some open questions:
\begin{enumerate*}
    \item The implication of $\vt$-geodesic convexity and smoothness is not direct. We are unaware how restrictive or favorable these conditions will be. That said, the facts that this concept, under specific $\vt$, is widely studied topic in 2-Wasserstein geometry, and there are some nontrivial examples satisfying these conditions, the concept seems to require further explorations. We leave this to the future work.
    \item Our VTRGD algorithm coincides with standard RGD in some applications, thus we obtain the silver stepsize acceleration of standard RGD for such specific cases. However, whether one can obtain silver stepsize acceleration of standard RGD for general problems remains an open question.
    \item We restricted our attention to deterministic gradient descent. Stochastic or proximal version still remains the open.
    \item There are extensions of the silver stepsize; showing for arbitrary iterates $n$ or showing the optimality  \citep{Grimmer2024LongSteps, grimmer2024composingoptimizedstepsizeschedules}. One can possibly explore the similar directions under Riemannian setting. 
    \item Recently, for specific classes of functions in the Euclidean setting, \cite{altschuler2024accelerationrandomstepsizeshedging} proposed a stepsize schedule for gradient descent that achieves the fully accelerated rate $O(1/n^2)$, matching that of momentum methods. Extending these ideas to the Riemannian setting would be an intriguing direction for future work. 
\end{enumerate*}


\section*{Acknowledgments}

JP acknowledges Bofan Wang for catching out the algebraic flaw in the initial draft. JP acknowledges Hyunwoong Chang and Jaewook J. Suh for helpful discussions on this topic. JP acknowledges support from NSF DMS 2210689. AB acknowledges NSF DMS 2210689, NSF DMS 1916371 for supporting this project. JS ackowledges support from NSF DMS DMS-2424305 and the ONR MURI grant N00014-20-1-2787.

\newpage
\clearpage

    \bibliographystyle{alpha}
    \bibliography{riem_silver_main.bib}

\newpage
\appendix

\section*{Appendix} 
\startcontents[appendices]
  \printcontents[appendices]{l}{1}{\setcounter{tocdepth}{1}}

\section{Preliminaries}

\subsection{Riemannian geometry}\label{appendix:riem_geom}
In this appendix, we introduce key concepts in Riemannian geometry \purple{briefly discussed in Section~\ref{section:background}}. We mainly mention the known results, and omit the proof and well-definedness of definitions. For detail, interested reader can find relevant material in textbooks, e.g., \cite{lee2012introduction, lee2018introduction, boumal2023intromanifolds}. 

A Riemannian manifold is a manifold equipped with an inner product for each tangent space, called a Riemannian metric. 

\begin{defn}[Riemannian manifold]\label{defn_rime_mfd}
    A Riemannian manifold $(M, g)$ is a real smooth manifold equipped with a Riemannian metric $g$ which assigns to each $p \in M$ a positive-definite inner product $g_p(v, w) = \langle v, w \rangle_{p}$ on the tangent space $T_{p}M$.
\end{defn}

Often, this tangent space $T_p M$ is \purple{conveniently expressed} in the form of the vector field, which takes a point in a manifold as an input and returns a tangent space vector at that point. Formally, the vector field of $M$ is defined as follows:
\begin{defn}[Vector field]
A map $X: C^{\infty}(M) \rightarrow C^{\infty}(M)$ is called a smooth vector field if it is a derivation, \ie $X$ satisfies
\[
X_{\cdot}(fg) = X_{\cdot}(f)g(\cdot) + f(\cdot)X_{\cdot}(g).
\]
Here $\cdot \in M$ is the input of the function.
\end{defn}
As the name derivation indicates, one can think of the vector field as a directional derivative along the direction of the vector field. The following familiar example may help. 

\begin{eg}[Vector field in $\bbR^d$]
For $f \in C^{\infty}(\bbR^d)$, $p \in \bbR^d$, and $v \in \bbR^d$, think of a directional derivative of $f$ at $p$ along direction $v$, $d_{v} f(p)$. If we fix $p$ and view $f$ as a variable input, then $v \in T_{p}M$ can be identified with the functional $f \mapsto d_v f(p)$. In other words, by defining $X_p(f) := d_{v}f(p)$, the value of vector field $X_{p}$ at each point $p \in M$ can be identified as a tangent vector $v \in T_{p}\bbR^d$. 
\end{eg}

From now on, we will write $X$ as a vector field, and this will mean a function $X_p(f) = d_{v}f(p)$ where $v = X_p$. For the definition of a directional derivative in general manifolds, we refer to \cite{lee2012introduction}. We write $\frakX(M)$ as a set of smooth vector fields on $M$.



One of the fundamental structure of a manifold is an affine connection, a concept that connects tangent spaces of different points of the manifold.

\begin{defn}[Affine connection]\label{defn_affine_connection}
    Let $M$ be a manifold, and $\frakX(M)$ be the set of all smooth vector fields on $M$. An operator $\nabla_{\cdot} \cdot: \frakX(M) \times \frakX(M) \rightarrow \frakX(X)$ is called an affine connection if for all $f \in C^{\infty}(M)$ and $X, Y \in \frakX(M)$ it satisfies the following properties:
    \begin{enumerate}
        \item $\nabla_{fX}Y = f \nabla_{X}Y$, i.e. linear in the first variable.
        \item $\nabla_{X}(fY) = (d_{X}f)Y + f \nabla_{X}Y$, that is, $\nabla$ satisfies the Leibniz rule in the second variable.
    \end{enumerate}
\end{defn}

In the case of Riemannian manifolds, we have a natural connection induced from the Riemannian metric, called Levi-Civita connection. 

\begin{defn}[Levi-Civita connection]\label{defn_levicivita_connection}
    For a Riemannian manifold $(M, g)$, let $\frakX(M)$ be a set of smooth vector field on $M$. The Levi-Civita connection is the unique affine connection $\nabla_{\cdot}\cdot: \frakX(M) \times \frakX(M) \rightarrow \frakX(M)$, satisfying the following properties:
    \begin{enumerate}
        \item $\nabla_X Y - \nabla_Y X = [X,Y]$, i.e. it is torsion-free. Here, $[\cdot, \cdot]$ denotes a Lie bracket.
        \item $X\left(g(Y,Z)\right) = g(\nabla_X Y, Z) + g(Y, \nabla_X Z)$, that is, the connection is compatible with the metric $g$.
    \end{enumerate}
\end{defn}


The choice of the affine connection determines multiple geometric concepts. One fundamental concept is geodesic curve, which is a constant speed curve on the manifold.
\begin{defn}[Geodesic]
    A smooth curve $\gamma: [0,1] \to M$ is called a geodesic curve if $\nabla_{\dot{\gamma}}\dot{\gamma} = 0$.
\end{defn}

A Riemannian manifold is called \emph{complete} if any two points are connected by some geodesic. We will always assume $M$ is a complete Riemannian manifold. 

We say a Riemannian submanifold $\widetilde{M} \subseteq M$ is \emph{totally geodesic} if for every $v \in T \widetilde{M}$, the geodesic with respect to $\widetilde{M}$, $\gamma_v$, lies entirely in $M$.

Equipped with the notion of geodesic, one can define the exponential map and logarithmic map on a Riemannian manifold. 

\begin{defn}[Exponential map, logarithmic map]
    Let $p \in M$. 
    \begin{enumerate}
        \item For any $v \in T_p M$, one can define a geodesic curve $\gamma_v:[0,1] \to M$ such that $\gamma_v(0) = p$ and $\gamma_v'(0) = v$. Then, one can define a map $\exp_p(v) := \gamma_v(1)$. This map is called the exponential map.
        \item It is known that the exponential map is a local diffeomorphism on $U$, the open neighborhood of $0 \in T_p M$. Therefore, one can define $\log_p q := \exp_p^{-1}(q)$ for $q \in \exp_p(U)$. This map is called the logarithmic map.
    \end{enumerate}
\end{defn}
To understand the notions of the exponential map and logarithmic map, \purple{we illustrate these concepts in the Euclidean case}. In the Euclidean space, $\exp_p(v) = p + v$ and $\log_p q = q - p$. In other words, the exponential map moves $p$ along the tangent direction $v$, and the logarithmic map returns the tangent direction from $p$ to $q$. 

Note the logarithmic map is only defined locally. While our analysis assumed the global existence of the logarithmic map over the geodesically convex subset $N$ (Assumption~\ref{assumption:common}), whether there is a global logarithmic map is not always guaranteed.

Another geometric concept induced from the connection is a covariant derivative, a notion of differentiation of the vector field along the curve.
\begin{defn}\cite[Definition 5.28]{boumal2023intromanifolds}[Vector field along the curve]
    Let $\gamma: [0,1] \to M$ be a smooth curve. A map $Z: [0,1] \to TM$ is called a vector field on $\gamma$ if $Z(t) \in T_{\gamma(t)}M$ for all $t \in [0,1]$. We write the set of vector fields on $\gamma$ as $\frakX(\gamma)$.
\end{defn}

\begin{defn}\cite[Theorem 5.29]{boumal2023intromanifolds}[Covariant derivative]
    Let $\gamma:[0,1] \to M$ be a smooth curve and $\nabla$ be an affine connection. Then, the covariant derivative is the unique operator $D_t: \frakX(\gamma) \to \frakX(\gamma)$ satisfying the following properties for all $Y, Z \in \frakX(\gamma), W \in \frakX(M), g \in c^{\infty}([0,1])$ and $a, b \in \bbR$:
    \begin{enumerate}
        \item $D_t(a Y + bZ) = a D_t(Y) + bD_t(Z)$.
        \item $D_t(gZ) = (\frac{d}{dt}g)Z + g D_t(Z)$.
        \item $(D_t(W \circ \gamma))(t) = \nabla_{\gamma'(t)}W$ for all $t \in [0,1]$.
    \end{enumerate}
    If $\nabla$ is the Levi-Civita connection, then the covariant derivative also satisfies
    \[
    \frac{d}{dt}\innerprod{Y}{Z} = \innerprod{D_t Y}{Z} + \innerprod{Y}{D_t Z}.
    \]
\end{defn}

Parallel transport is a notion of transporting vectors between different tangent space parallely. The parallel transport is uniquely determined by the covariant derivative.

\begin{defn}\cite[Defintion 10.33]{boumal2023intromanifolds}
    A vector field $Z \in \frakX(\gamma)$ is called parallel if $D_t Z = 0$.
\end{defn}

\begin{defn}\cite[Definition 10.35]{boumal2023intromanifolds}[Parallel transport]
    Let $\gamma:[0,1] \to M$ be a smooth curve. The parallel transport of the tangent vector at $T_{\gamma(t_0)}M$ to the tangent vector at $T_{\gamma(t_1)}M$ along the curve $\gamma$ is the map
    \[
    \Gamma(\gamma)_{t_0}^{t_1}: T_{\gamma(t_0)}M \to T_{\gamma(t_1)}M
    \]
    defined by $\Gamma(\gamma)_{t_0}^{t_1}(Z(t_0)) = Z(t_1)$ for the parallel vector field $Z \in \frakX(\gamma)$.
\end{defn}

We collect some properties of the parallel transport.

\begin{prop}\cite[Proposition 10.36]{boumal2023intromanifolds}\label{prop:parallel_transport_property}
    \begin{enumerate}
        \item $\Gamma(\gamma)_{t_0}^{t_1}$ is a linear map.
        \item $\Gamma(\gamma)_{t_1}^{t_2} \circ \Gamma(\gamma)_{t_0}^{t_1} = \Gamma(\gamma)_{t_0}^{t_2}$.
        \item $\Gamma(\gamma)_{t_0}^{t_1} \circ \Gamma(\gamma)_{t_1}^{t_0} = id$.
        \item $\innerprod{v}{w}_{\gamma(t_0)} = \innerprod{\Gamma(\gamma)_{t_0}^{t_1} v}{\Gamma(\gamma)_{t_0}^{t_1} w}_{\gamma(t_1)}$.
    \end{enumerate}
\end{prop}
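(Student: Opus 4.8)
The plan is to derive all four properties from a single ingredient: the existence and uniqueness of a parallel vector field along $\gamma$ with a prescribed value at one point. Concretely, in any coordinate chart the condition $D_t Z = 0$ becomes a linear first-order ODE system for the components of $Z$, with coefficients built from the Christoffel symbols evaluated along $\gamma$; standard linear ODE theory gives global solutions on compact intervals, and covering $[0,1]$ by finitely many charts shows that for every $s\in[0,1]$ and every $v\in T_{\gamma(s)}M$ there is a unique $Z\in\frakX(\gamma)$ with $D_tZ=0$ and $Z(s)=v$. This is exactly what makes $\Gamma(\gamma)_{t_0}^{t_1}$ well defined, and everything else is a formal consequence.

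For linearity (item 1), given $v,w\in T_{\gamma(t_0)}M$ and $a,b\in\bbR$, let $Y,Z$ be the parallel fields with $Y(t_0)=v$ and $Z(t_0)=w$. Linearity of $D_t$ gives $D_t(aY+bZ)=0$, so $aY+bZ$ is parallel with value $av+bw$ at $t_0$; by uniqueness it is \emph{the} parallel field through $av+bw$, and evaluating at $t_1$ yields $\Gamma(\gamma)_{t_0}^{t_1}(av+bw)=a\,\Gamma(\gamma)_{t_0}^{t_1}v+b\,\Gamma(\gamma)_{t_0}^{t_1}w$. For the composition law (item 2), fix $v\in T_{\gamma(t_0)}M$ and let $Z$ be parallel along $\gamma$ with $Z(t_0)=v$; the restriction of $Z$ to $[t_1,t_2]$ is again parallel, hence by uniqueness equals the parallel field with initial value $Z(t_1)$, so $\Gamma(\gamma)_{t_1}^{t_2}(Z(t_1))=Z(t_2)$. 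Since $\Gamma(\gamma)_{t_0}^{t_1}v=Z(t_1)$ and $\Gamma(\gamma)_{t_0}^{t_2}v=Z(t_2)$, composing gives $\Gamma(\gamma)_{t_1}^{t_2}\circ\Gamma(\gamma)_{t_0}^{t_1}=\Gamma(\gamma)_{t_0}^{t_2}$. Item 3 is the special case $t_2=t_0$ after relabeling, using the trivial identity $\Gamma(\gamma)_s^s=\mathrm{id}$ (the parallel field through $v$, evaluated at its own starting time, is $v$). Finally, for item 4, since we use the Levi-Civita connection the covariant derivative is metric-compatible, so for parallel $Y,Z$ with $Y(t_0)=v$ and $Z(t_0)=w$ we get $\tfrac{d}{dt}\innerprod{Y(t)}{Z(t)}_{\gamma(t)}=\innerprod{D_tY}{Z}+\innerprod{Y}{D_tZ}=0$; hence $t\mapsto\innerprod{Y(t)}{Z(t)}_{\gamma(t)}$ is constant, and comparing $t=t_0$ with $t=t_1$ gives $\innerprod{\Gamma(\gamma)_{t_0}^{t_1}v}{\Gamma(\gamma)_{t_0}^{t_1}w}_{\gamma(t_1)}=\innerprod{v}{w}_{\gamma(t_0)}$.

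The only step with genuine content is the existence-and-uniqueness claim for the parallel transport ODE $D_tZ=0$ along $\gamma$; this is where the linear ODE theory and the chart-covering argument live, and it is also what the well-definedness of $\Gamma(\gamma)_{t_0}^{t_1}$ presupposes. Once that is granted, items 1--4 are each one-line formal deductions, requiring no curvature information and no manifold-specific structure beyond metric compatibility of the Levi-Civita connection, which is invoked only in item 4. Since the statement is quoted verbatim from \cite{boumal2023intromanifolds}, in the write-up I would simply cite it, the above being the standard argument behind it.
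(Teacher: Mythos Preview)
Your proof is correct and is the standard textbook argument. The paper does not supply its own proof of this proposition; it is stated as a quotation from \cite{boumal2023intromanifolds}[Proposition 10.36] and left at that, exactly as you anticipate in your final sentence.
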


When $\gamma$ is chosen to be the geodesic curve such that $\gamma(0) = x$ and $\gamma(1) = y$, we denote the parallel transport $\Gamma(\gamma)_{0}^{1}$ as $\Gamma_{x}^{y}$. When context is clear, we will denote $\Gamma_x^y$ as the (geodesic) parallel transport from $x$ to $y$.

\begin{rmk}[Properties of geodesic parallel transport]\label{rmk:geodesic_parallel_transport_property}
    By Proposition \ref{prop:parallel_transport_property}, a geodesic parallel transport $\Gamma_x^y$ satisfies the following properties:
    \begin{enumerate}
        \item $\Gamma_x^y$ is a linear map.
        \item $\Gamma_x^y \circ \Gamma_y^x = id$.
        \item $\innerprod{v}{w}_{x} = \innerprod{\Gamma_x^y v}{\Gamma_x^y w}_y$.
    \end{enumerate}
    Note the second property is dropped, as geodesics from $x$ to $y$ and $y$ to $z$ do not necessarily be in the same curve. 
\end{rmk}

Remark \ref{rmk:geodesic_parallel_transport_property} is the key properties of parallel transport used in our analysis. These properties play a pivotal role when we define the parallel transport in 2-Wasserstein space (Proposition \ref{prop:transfer}).





The last geometric concept induced from the Levi-Civita connection is curvature.

\begin{defn}[Riemannian curvature]\label{defn_riem_curvature}
    The Riemannian curvature tensor $R(\cdot, \cdot)\cdot: \frakX(M) \times \frakX(M) \times \frakX(M) \rightarrow \frakX(M)$ is defined by the following formula:
    \[
    R(X,Y)Z = \nabla_X \nabla_Y Z - \nabla_Y \nabla_X Z -\nabla_{[X,Y]}Z
    \]
    where $[\cdot, \cdot]$ denotes a Lie bracket.
\end{defn}

The key geometric quantity in our analysis is sectional curvature, which generalizes Gaussian curvature in a 2-dimensional surface. 

\begin{defn}[Sectional curvature]\label{defn_sec_curvature}
    Let $p \in M$, and denote $\Sigma_p$ a set of two-dimensional subspaces in $T_{p}M$. The sectional curvature $K: \Sigma_p \rightarrow \bbR$ is defined by the following formula:
    \[
    K(\sigma_p) = \frac{\innerprod{R(u, v) v}{u}_{p}}{\norm{u}_p^2\norm{v}_p^2 - \innerprod{u}{v}_p^2}
    \]
    where $\{u, v\}$ is a basis of $\sigma_p$.
\end{defn}

Note that we can write this sectional curvature as a function of two linearly independent vectors in $T_{p}M$ as well. In particular, if $u, v$ are orthonormal, then $K(u,v) = \innerprod{R(u,v)v}{u}_p$.

A Riemannian manifold is called flat if for all $p$ and $\sigma_p$ sectional curvature $K(\sigma_p) = 0$, positively curved if $K(\sigma_p) > 0$, and negatively curved if $K(\sigma_p) < 0$.


\subsubsection{Functional properties of functions on Riemannian manifolds}\label{appendix:functional_properties_of_riem}

In this appendix, we introduce additional functional properties of functions on a Riemannian manifold.

We begin with introducing the notion of geodesically convex set.
\begin{defn}\cite[Definition 11.2]{boumal2023intromanifolds}
   Let $(M,g)$ be a complete Riemannian manifold. $N \subseteq M$ is called geodesically convex subset of $M$ if for all $x, y \in N$, there exists a geodesic $\gamma:[0,1] \to M$ such that $\gamma(0) = x$, $\gamma(1) = y$, and $\gamma(t) \in N$ for all $t \in [0,1]$.
\end{defn}

Next, we introduce the notion of geodesic convexity and smoothness.

\begin{defn}[Geodesic convexity and smoothness]\label{defn:geo_convex_and_smooth_general}
    Let $f: N \to \bbR$ be a differentiable function. 
    \begin{enumerate}
        \item $f$ is called geodesically $\alpha$-strongly convex if for all $x, y \in N$
        \[
        f(y) \geq f(x) + \innerprod{\grad f(x)}{\log_x y}_{x} + \frac{\alpha}{2}d^2(x,y).
        \]
        If $\alpha = 0$, we say $f$ is geodesically convex.
        \item $f$ is called geodesically $L$-smooth if for all $x, y \in N$
        \[
        \norm{\Gamma_y^x \grad f(y) -\grad f(x)}_x \leq Ld(x,y).
        \]
    \end{enumerate}
\end{defn}

Now, we show the key inequality induced from the geodesic $L$-smoothness. This is often called descent lemma.
\begin{lem}[Descent lemma]\label{lem:descent_lemma}
    If $f$ is geodesically $L$-smooth, then for all $x, y \in N$
    \[
    f(y) \leq f(x) + \innerprod{\grad f(x)}{\log_x y} + \frac{L}{2}d^2(x,y).
    \]
\end{lem}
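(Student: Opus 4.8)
The plan is to integrate the Riemannian gradient of $f$ along the geodesic from $x$ to $y$ and to estimate the deviation from the linearization at $x$ via geodesic $L$-smoothness, mirroring the classical Euclidean argument with parallel transport inserted so that all inner products are taken in $T_xM$. Let $\gamma\colon[0,1]\to N$ be the geodesic with $\gamma(0)=x$ and $\gamma'(0)=\log_x y$ (so $\gamma(1)=y$), which exists and stays in $N$ by Assumption~\ref{assumption:common}; since it is a minimizing constant-speed geodesic, $\norm{\log_x y}_x = d(x,y)$ and $d(x,\gamma(t))\le t\,d(x,y)$ for $t\in[0,1]$. Put $g(t):=f(\gamma(t))$. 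Differentiability of $f$ and the chain rule give $g'(t)=\innerprod{\grad f(\gamma(t))}{\gamma'(t)}_{\gamma(t)}$, so by the fundamental theorem of calculus
\[
f(y)-f(x)=\int_0^1 \innerprod{\grad f(\gamma(t))}{\gamma'(t)}_{\gamma(t)}\,\rmd t.
\]

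The one geometric fact I would invoke is that the velocity field of a geodesic is parallel along itself, \ie $D_t\gamma'=\nabla_{\gamma'}\gamma'=0$, whence $\gamma'(t)=\Gamma_x^{\gamma(t)}\log_x y$. Using that a geodesic parallel transport is a linear isometry (Remark~\ref{rmk:geodesic_parallel_transport_property}), each integrand rewrites as $\innerprod{\Gamma_{\gamma(t)}^x\grad f(\gamma(t))}{\log_x y}_x$. Subtracting $\innerprod{\grad f(x)}{\log_x y}_x=\int_0^1\innerprod{\grad f(x)}{\log_x y}_x\,\rmd t$ from both sides yields
\[
f(y)-f(x)-\innerprod{\grad f(x)}{\log_x y}_x=\int_0^1\innerprod{\Gamma_{\gamma(t)}^x\grad f(\gamma(t))-\grad f(x)}{\log_x y}_x\,\rmd t.
\]

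Applying \cs inside the integral and then geodesic $L$-smoothness (Definition~\ref{def:geosmooth}), namely $\norm{\Gamma_{\gamma(t)}^x\grad f(\gamma(t))-\grad f(x)}_x\le L\,d(x,\gamma(t))\le Lt\,d(x,y)$, together with $\norm{\log_x y}_x=d(x,y)$, gives
\[
f(y)-f(x)-\innerprod{\grad f(x)}{\log_x y}_x\le\int_0^1 Lt\,d^2(x,y)\,\rmd t=\frac{L}{2}d^2(x,y),
\]
which is the assertion of Lemma~\ref{lem:descent_lemma}.

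I do not expect a genuine obstacle here: the only points needing care are the identity $\gamma'(t)=\Gamma_x^{\gamma(t)}\log_x y$ and the chain-rule formula for $g'$, both standard consequences of the Levi-Civita connection recalled in Appendix~\ref{appendix:riem_geom}; the remainder is a line-by-line transcription of the Euclidean proof, with parallel transport used precisely to make the inner products comparable. The mild technical caveat is the implicit use of the fact that $\log_x y$ realizes the minimizing geodesic and that $\gamma$ stays in $N$, both of which are built into the well-definedness hypotheses on $N$ in Assumption~\ref{assumption:common}.
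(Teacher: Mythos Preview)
Your proof is correct and follows essentially the same approach as the paper: both integrate $\langle\grad f(\gamma(t)),\gamma'(t)\rangle_{\gamma(t)}$ along the geodesic, use the parallel-transport isometry to pull everything back to $T_xM$, subtract the linearization, and bound the remainder via Cauchy--Schwarz together with $L$-smoothness and the constant-speed property $d(x,\gamma(t))=t\,d(x,y)$. The only cosmetic difference is that you explicitly invoke $\gamma'(t)=\Gamma_x^{\gamma(t)}\log_x y$ (geodesic velocity is parallel) while the paper absorbs this into ``properties of the parallel transport.''
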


\begin{proof}
    Let $\gamma:[0,1] \to M$ be a geodesic curve such that $\gamma(0) = x$, $\gamma(1) = y$. By the definition of the Riemannian logarithmic map, we get $\gamma'(0) = \log_x y$. By Fundamental Theorem of Calculus and properties of the parallel transport,
    \begin{align*}
        f(y) &= f(\gamma(1)) = f(\gamma(0)) + \int_{0}^{1} \frac{d}{dt} (f \circ \gamma)(t) dt = f(x) + \int_{0}^{1} \innerprod{\grad f(\gamma(t))}{\gamma'(t)}dt\\
        &= f(x) + \int_{0}^{1} \innerprod{\Gamma_{\gamma(t)}^{\gamma(0)} \grad f(\gamma(t))}{\gamma'(0)}dt = f(x) + \int_{0}^{1} \innerprod{\Gamma_{\gamma(t)}^{x} \grad f(\gamma(t))}{\log_x y}dt.
    \end{align*}
    Then, by subtracting $f(x) + \innerprod{\grad f(x)}{\log_x y}$ from the both hand sides,
    \begin{align*}
        f(y) -f (x) - \innerprod{\grad f(x)}{\log_x y} &= \int_{0}^{1} \innerprod{\Gamma_{\gamma(t)}^{x} \grad f(\gamma(t)) - \grad f(x)}{\log_{x}y}\\
        &\overset{\text{(i)}}{\leq} \int_{0}^{1} \norm{\Gamma_{\gamma(t)}^{x} \grad f(\gamma(t)) - \grad f(x) }\norm{\log_x y}dt\\
        &\overset{\text{(ii)}}{\leq} \int_{0}^{1} Ld(\gamma(t), x) d(x,y) dt \overset{\text{(iii)}}{=} Ld^2(x,y)\int_{0}^{1}tdt\\
        &= \frac{L}{2}d^2(x,y).
    \end{align*}
    For (i) we used Cauchy-Schwartz inequality, and for (ii) we used $L$-smoothness property. For (iii) we used the fact that the geodesic curve satisfies $d(x, \gamma(t)) = td(x, y)$ due to the constant speed property.
\end{proof}

\subsubsection{Product Riemannain manifold}\label{appendix:product_riemannian}

In Appendix~\ref{appendix_BW_geom}, we will encounter a product manifold. To that end, we present some preliminary facts here. We omit the details and simply list a few useful results. For more information on product Riemannian manifolds, we refer the reader to \cite{lee2018introduction}.

\begin{defn}[Product Riemannian manifold]\label{defn_prod_mfd}
A product Riemannian manifold is a manifold $M = M_1 \times M_2$ such that each $(M_1, g_1)$ and $(M_2, g_2)$ are Riemannian manifolds, and the Riemannian metric $g$ is defined by the product metric:
\[
g\left((X_1, X_2), (Y_1, Y_2)\right) = g_1(X_1, Y_1) + g_2(X_2, Y_2).
\]
\end{defn}

Product Riemannians manifold have useful properties that make the computation easier.

\begin{thm}[Levi-Civita connection of a product Riemannian manifold]\label{thm_prod_connection}
    The Levi-Civita connection of a product Riemannian manifold $(M,g) = (M_1, g_1) \times (M_2, g_2)$ satisfies the following property:
    \[
    \nabla_{(X_1, X_2)} (Y_1, Y_2) = \nabla_{1, X_1} Y_1 \oplus \nabla_{2, X_2} Y_2.
    \]
\end{thm}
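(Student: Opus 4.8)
The plan is to invoke the uniqueness half of the characterization of the Levi-Civita connection recalled in Definition~\ref{defn_levicivita_connection}: on $(M,g)$ there is exactly one affine connection (Definition~\ref{defn_affine_connection}) that is both torsion-free and compatible with $g$. So it suffices to show that the candidate operator
\[
\widetilde{\nabla}_{(X_1,X_2)}(Y_1,Y_2) := \nabla_{1,X_1}Y_1 \oplus \nabla_{2,X_2}Y_2
\]
is an affine connection enjoying both properties; uniqueness then forces $\widetilde{\nabla} = \nabla$, which is the claim. Throughout I would use the identification $T_{(p,q)}M \cong T_pM_1 \oplus T_qM_2$, under which every field in $\frakX(M)$ is a $C^\infty(M)$-combination of lifts of fields from the two factors, so the connection axioms and the two Levi-Civita properties (all of which are $C^\infty(M)$-linear or derivational in the appropriate slots) need only be verified on split fields $(X_1,X_2)$. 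I would also record the two elementary facts for split fields: $d_{(X_1,X_2)}f = X_1(f) + X_2(f)$ for $f \in C^\infty(M)$ (the differential splits as a direct sum over the factors), and $[(X_1,X_2),(Y_1,Y_2)] = [X_1,Y_1]\oplus[X_2,Y_2]$ (lifts from different factors commute, while lifts from the same factor bracket as in that factor).

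First I would check that $\widetilde{\nabla}$ is an affine connection: $C^\infty(M)$-linearity in the first argument is immediate from the same property of $\nabla_1,\nabla_2$, and the Leibniz rule $\widetilde{\nabla}_X(fY) = (d_Xf)Y + f\widetilde{\nabla}_XY$ follows by applying the Leibniz rules of $\nabla_1,\nabla_2$ componentwise together with $d_{(X_1,X_2)}f = X_1(f)+X_2(f)$. Next I would verify the two defining properties. For torsion-freeness, with $X=(X_1,X_2)$, $Y=(Y_1,Y_2)$,
\[
\widetilde{\nabla}_XY - \widetilde{\nabla}_YX = \big(\nabla_{1,X_1}Y_1 - \nabla_{1,Y_1}X_1\big)\oplus\big(\nabla_{2,X_2}Y_2-\nabla_{2,Y_2}X_2\big) = [X_1,Y_1]\oplus[X_2,Y_2] = [X,Y],
\]
using that $\nabla_1,\nabla_2$ are torsion-free and the bracket formula above. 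For metric compatibility, expand $g(Y,Z) = g_1(Y_1,Z_1) + g_2(Y_2,Z_2)$ and apply $X=(X_1,X_2)$; since $X_i$ differentiates only the $i$-th factor, the derivative separates, and the metric compatibility of $\nabla_1$ and of $\nabla_2$ gives $X(g(Y,Z)) = g(\widetilde{\nabla}_XY,Z) + g(Y,\widetilde{\nabla}_XZ)$. Hence $\widetilde{\nabla}$ is a torsion-free, metric-compatible affine connection, so by uniqueness $\widetilde{\nabla}=\nabla$.

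The main obstacle — indeed the only non-mechanical point — is the bookkeeping for vector fields on a product manifold: a general element of $\frakX(M_1\times M_2)$ is not of the form $(X_1,X_2)$ with $X_i\in\frakX(M_i)$, so one must justify that verifying all identities on lifts of factor fields suffices, using that both the connection axioms and the two Levi-Civita conditions are tensorial or derivational enough to propagate from a $C^\infty(M)$-generating set. Once this reduction is set up carefully, every displayed computation is a one-line consequence of the corresponding statement on each factor, and no genuine curvature or analytic estimate is involved.
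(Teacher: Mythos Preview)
Your proposal is correct and follows the standard textbook argument: define the candidate connection componentwise, verify it is an affine connection, check torsion-freeness and metric compatibility on split fields, then invoke uniqueness from Definition~\ref{defn_levicivita_connection}. The paper does not actually supply its own proof of this statement; it explicitly says it omits details and simply lists useful results, referring the reader to Lee's textbook. So there is nothing to compare against, and your argument is exactly the kind of verification one finds in that reference. Your caveat about general vector fields on $M_1\times M_2$ not being split is well taken and is the only point requiring any care; the reduction to lifts via $C^\infty(M)$-linearity and the Leibniz rule is standard and you have identified it correctly.
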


The following corollary is a direct consequence of the definition of Riemannian curvature, Lie bracket, and Theorem \ref{thm_prod_connection}.

\begin{cor}[Riemannian curvature of a product Riemannian manifold]\label{cor_prod_cur}
    \[
    R\left((X_1, X_2), (Y_1, Y_2)\right) (Z_1, Z_2) = R_{1}(X_1, Y_1)Z_1 \oplus R_{2}(X_2, Y_2)Z_2.
    \]
\end{cor}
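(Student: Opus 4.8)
\textbf{Proof plan for Corollary~\ref{cor_prod_cur}.}

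The plan is to unwind the definition of the Riemannian curvature tensor (Definition~\ref{defn_riem_curvature}) componentwise, using the product structure of the Levi-Civita connection from Theorem~\ref{thm_prod_connection} together with the behavior of the Lie bracket on a product manifold. First I would recall that for vector fields $X = (X_1, X_2)$ and $Y = (Y_1, Y_2)$ on $M = M_1 \times M_2$, the Lie bracket splits as $[X, Y] = ([X_1, Y_1], [X_2, Y_2])$, since the flows of product vector fields act independently on the two factors; this is a standard fact about product manifolds. Combining this with the identity $\nabla_{(X_1, X_2)}(Y_1, Y_2) = \nabla_{1, X_1} Y_1 \oplus \nabla_{2, X_2} Y_2$ from Theorem~\ref{thm_prod_connection}, I would observe that each of the three terms appearing in $R(X, Y)Z = \nabla_X \nabla_Y Z - \nabla_Y \nabla_X Z - \nabla_{[X,Y]} Z$ splits as a direct sum over the two factors.

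The key steps, in order, are: (1) write $Z = (Z_1, Z_2)$ and apply Theorem~\ref{thm_prod_connection} to get $\nabla_Y Z = \nabla_{1, Y_1} Z_1 \oplus \nabla_{2, Y_2} Z_2$; (2) apply Theorem~\ref{thm_prod_connection} a second time to compute $\nabla_X \nabla_Y Z = \nabla_{1, X_1}(\nabla_{1, Y_1} Z_1) \oplus \nabla_{2, X_2}(\nabla_{2, Y_2} Z_2)$; (3) do the same for the symmetric term $\nabla_Y \nabla_X Z$; (4) use the splitting of the Lie bracket to write $\nabla_{[X,Y]} Z = \nabla_{1, [X_1, Y_1]} Z_1 \oplus \nabla_{2, [X_2, Y_2]} Z_2$; (5) subtract, and collect the $M_1$-components and $M_2$-components separately, recognizing each collected expression as exactly $R_1(X_1, Y_1) Z_1$ and $R_2(X_2, Y_2) Z_2$ respectively by Definition~\ref{defn_riem_curvature} applied on each factor. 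This yields $R((X_1, X_2), (Y_1, Y_2))(Z_1, Z_2) = R_1(X_1, Y_1) Z_1 \oplus R_2(X_2, Y_2) Z_2$.

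There is no real obstacle here; the result is essentially bookkeeping once Theorem~\ref{thm_prod_connection} is in hand. The only point requiring a word of care is the splitting of the Lie bracket on the product manifold, which I would either cite from a standard reference (e.g.\ \cite{lee2018introduction}) or justify briefly by noting that a product vector field $(X_1, X_2)$ acts on a function $f(p_1, p_2)$ through partial derivatives in the two factors, so the commutator of two such fields again has no cross terms. Everything else follows by linearity of the connection in its arguments and direct substitution.
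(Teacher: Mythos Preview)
Your proposal is correct and matches the paper's approach exactly: the paper simply states that the corollary is ``a direct consequence of the definition of Riemannian curvature, Lie bracket, and Theorem~\ref{thm_prod_connection}'' without giving further details, and your plan fills in precisely those details.
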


Lastly, we obtain the following collorary, which will play an important role in our later section.

\begin{cor}[Sectional curvature of product Riemannian manifold]\label{cor_prod_sec_cur}
    Let $(u_1, u_2), (v_1, v_2)$ be orthonormal vectors in $T_p M$. Write $A_i := \norm{u_i}^2 \norm{v_i}^2 - g_i(u_i, v_i)^2$. Then,
    \[
    K\left((u_1, u_2),(v_1, v_1)\right) = A_1 K_1(u_1, v_1) + A_2 K_2(u_2, v_2).
    \]
\end{cor}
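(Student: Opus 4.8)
\textbf{Proof proposal for Corollary~\ref{cor_prod_sec_cur}.}

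The plan is to start from the definition of sectional curvature (Definition~\ref{defn_sec_curvature}), namely
\[
K(\sigma_p) = \frac{\innerprod{R(u,v)v}{u}_p}{\norm{u}_p^2\norm{v}_p^2 - \innerprod{u}{v}_p^2},
\]
and then simplify the numerator and denominator separately using the product structure. For orthonormal $(u_1,u_2),(v_1,v_2)$ the denominator is simply $1$, so it suffices to compute the numerator $\innerprod{R((u_1,u_2),(v_1,v_2))(v_1,v_2)}{(u_1,u_2)}$. First I would invoke Corollary~\ref{cor_prod_cur} to split the curvature tensor as
\[
R((u_1,u_2),(v_1,v_2))(v_1,v_2) = R_1(u_1,v_1)v_1 \oplus R_2(u_2,v_2)v_2,
\]
and then apply the definition of the product metric (Definition~\ref{defn_prod_mfd}) to get
\[
\innerprod{R((u_1,u_2),(v_1,v_2))(v_1,v_2)}{(u_1,u_2)}_p = g_1(R_1(u_1,v_1)v_1, u_1) + g_2(R_2(u_2,v_2)v_2, u_2).
\]

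Next I would rewrite each summand $g_i(R_i(u_i,v_i)v_i,u_i)$ in terms of $K_i$. Since $u_i,v_i$ are in general \emph{not} orthonormal in $M_i$ — they are only the components of globally orthonormal vectors — I cannot use the simplified formula $K_i(u_i,v_i) = g_i(R_i(u_i,v_i)v_i,u_i)$ directly. Instead, by the scaling identity in Definition~\ref{defn_sec_curvature} the full formula gives
\[
g_i(R_i(u_i,v_i)v_i,u_i) = K_i(u_i,v_i)\bigl(\norm{u_i}_i^2\norm{v_i}_i^2 - g_i(u_i,v_i)^2\bigr) = A_i\, K_i(u_i,v_i),
\]
provided $u_i,v_i$ are linearly independent (if either pair is dependent, $A_i = 0$ and the corresponding term vanishes, consistent with the convention). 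Substituting both terms back yields exactly
\[
K((u_1,u_2),(v_1,v_2)) = A_1 K_1(u_1,v_1) + A_2 K_2(u_2,v_2).
\]

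The main obstacle, and really the only subtlety, is the one flagged above: the component vectors $u_i,v_i$ need not be orthonormal (or even nonzero) within the factor manifolds, so one must carefully use the homogeneous-degree-zero property of sectional curvature rather than the orthonormal shortcut, and handle the degenerate case where a component pair is linearly dependent (in which both the numerator contribution and $A_i$ vanish). Everything else is a routine unwinding of the product metric, the product curvature tensor from Corollary~\ref{cor_prod_cur}, and bilinearity of $g_i$; I would not belabor those computations.
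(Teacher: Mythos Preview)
Your proposal is correct and follows essentially the same approach as the paper: both use orthonormality to drop the denominator, apply Corollary~\ref{cor_prod_cur} and the product metric to split $g(R(u,v)v,u)$ into the two factor contributions, and then identify each $g_i(R_i(u_i,v_i)v_i,u_i)$ with $A_i K_i(u_i,v_i)$. You are actually more careful than the paper about the subtlety that $u_i,v_i$ need not be orthonormal (or independent) in $M_i$, which the paper's proof absorbs silently into its final equality.
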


\begin{proof}
    From Definition~\ref{defn_sec_curvature}, Definition~\ref{defn_prod_mfd}, and Corollary~\ref{cor_prod_cur}, we have
    \begin{align*}
        K\left((u_1, u_2),(v_1, v_2)\right) &= g\left(R((u_1, u_2),(v_1, v_2))(v_1, v_2), (u_1,u_2)\right)\\
        &= g\left((R_1(u_1, v_1) v_1, R_2(u_2, v_2) v_2), (u_1, u_2)\right)\\
        &= g_1(R_1(u_1,v_1)v_1, u_1) + g_2(R_2(u_2,v_2)v_2, u_2)\\
        &= A_1 K_1(u_1, v_1) + A_2K_2(u_2, v_2).
    \end{align*}
\end{proof}

In particular, if $K_1 = 0$, \ie one of the spaces is flat, the the curvature behavior of the product manifold is entirely determined by $K_2$. This will be the case in Appendix~\ref{appendix_BW_geom}.

\subsection{Wasserstein geometry}\label{appendix:wasser_geom}

In this appendix, we introduce the core concept of Wasserstein geometry, which is one of our key application. We write the space of probability measures with a finite $p$th moment on $\bbR^d$ by $\calP_{p}(\bbR^d)$. Again, we mainly introduce the known results without proofs. For interested readers, we refer to \cite{Villani2008OptimalTO, ambrosio2008gradientflow, santambrogio2014introduction, chewi2024logconcave}.

For $\mu, \nu \in \calP_p(\bbR^d)$, let $\Gamma(\mu, \nu)$ be a set of couplings of $\mu$ and $\nu$. Wasserstein distance between $\mu$ and $\nu$ are defined as follows.

\begin{defn}[Wasserstein metric]\label{defn:wasserstein_distance}
    Let $\mu, \nu \in \calP_p(\bbR^d)$. Denote $\Gamma(\mu, \nu)$ to be a set of coupling measures of $\mu$ and $\nu$. $p$-Wasserstein distance between $\mu$ and $\nu$ is defined as follows:
    \[
    W_p^p(\mu, \nu):= \inf_{\gamma \in \Gamma(\mu, \nu)} \bbE_{(x,y) \sim \gamma}\left[\norm{x - y}^p\right].
    \]
\end{defn}
This is known to be a well-defined metric. A metric space $(\calP_p(\bbR^d), W_p)$ is called $p$-Wasserstein space.

2-Wasserstein space is typically a more interesting space compared to other p-Wasserstein spaces due to its geometric properties. \cite{brenier1991PolarFA, jordan1998jko, otto2001geometry} found out that if we restrict our attention to the probability measures which are absolutely continuous with respect to Lebesgue measure and have a finite second moment, denoted by $\calP_{2,ac}(\bbR^d)$, then $(\calP_{2,ac}(\bbR^d), W_2)$ endows a richer geometric properties. Specifically, while $(\calP_{2,ac}(\bbR^d), W_2)$ is not precisely a Riemannian manifold, its geometry is almost same to the non-negatively curved Riemannian manifold. 

The reason $(\calP_{2,ac}(\bbR^d), W_2)$ endows a Riemannian structure is rooted from the following theorem \cite{brenier1991PolarFA}:
\begin{thm}[Brenier Theorem]\label{thm:brenier}
    If $\mu, \nu \in \calP_{2,ac}(\bbR^d)$, then
    \[
    W_2^2(\mu, \nu) = \min_{T \in L^2(\mu) \text{ s.t. } T_{\# \mu} = \nu} \bbE_{x \sim \mu}\left[\norm{T(x) - x}^2\right] =  \min_{T \in L^2(\mu) \text{ s.t. } T_{\# \mu} = \nu} \norm{T - id}_{L^2(\mu; \bbR^d)}^2.
    \]
    Denote the minima as $T_{\mu,\nu}$. Then $T_{\mu, \nu}$ is a gradient of some convex function $\phi$ on $\bbR^d$ $\mu$-a.e. Furthermore, $T_{\mu, \nu} \circ T_{\nu, \mu} = id$. The minima $T_{\mu, \nu}$ is called the optimal transport map from $\mu$ to $\nu$. 
\end{thm}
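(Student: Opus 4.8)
The plan is to deduce Brenier's theorem from the Kantorovich relaxation of the quadratic transport problem, and then to upgrade the optimal \emph{coupling} to a genuine \emph{map} by exploiting that $\mu$ is absolutely continuous. First I would pass from the minimization over maps $T$ with $T_{\#\mu}=\nu$ to the Kantorovich problem $\inf_{\gamma\in\Gamma(\mu,\nu)}\bbE_{(x,y)\sim\gamma}[\norm{x-y}^2]$ over couplings. Since the marginals are fixed, $\Gamma(\mu,\nu)$ is tight, hence weakly compact by Prokhorov's theorem; the cost $(x,y)\mapsto\norm{x-y}^2$ is lower semicontinuous and nonnegative, so a minimizer $\gamma^{*}$ exists. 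Because every admissible map $T$ induces the coupling $(id,T)_{\#\mu}\in\Gamma(\mu,\nu)$ with the same cost, the Kantorovich value is at most the value of the map problem; it therefore suffices to show that $\gamma^{*}$ is concentrated on the graph of a Borel map.

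To identify that map, I would use the quadratic structure of the cost together with Kantorovich duality (equivalently, cyclical monotonicity). Expanding $\norm{x-y}^2=\norm{x}^2-2\innerprod{x}{y}+\norm{y}^2$ and noting that the $\norm{x}^2$ and $\norm{y}^2$ terms are integrable against $\mu$ and $\nu$ respectively by the finite-second-moment assumption, minimizing the transport cost is equivalent to \emph{maximizing} $\bbE_{(x,y)\sim\gamma}[\innerprod{x}{y}]$ over $\gamma\in\Gamma(\mu,\nu)$. Duality for this problem is attained by a pair of convex conjugate potentials $u$ and $v=u^{*}$, with $u(x)+u^{*}(y)\ge\innerprod{x}{y}$ everywhere (Fenchel--Young) and equality exactly on the subdifferential graph; complementary slackness then forces
\[
\operatorname{supp}(\gamma^{*})\subseteq\{(x,y):u(x)+u^{*}(y)=\innerprod{x}{y}\}=\{(x,y):y\in\partial u(x)\}.
\]
Alternatively, and without invoking duality, $\operatorname{supp}(\gamma^{*})$ is cyclically monotone (optimality against finite rearrangements of the support points), hence contained in the subdifferential of a convex function by Rockafellar's theorem.

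The decisive step is the regularity of $u$: a proper convex function on $\bbR^{d}$ is locally Lipschitz on the interior of its domain and therefore differentiable Lebesgue-almost everywhere (Rademacher, or Alexandrov for second-order differentiability); since $\mu$ is absolutely continuous with respect to Lebesgue measure (and, after checking properness, charges only the interior of the domain of $u$), it is differentiable $\mu$-almost everywhere, so $\partial u(x)=\{\nabla u(x)\}$ is single-valued $\mu$-a.e. Consequently $\gamma^{*}$, being concentrated on the subdifferential graph, equals $(id,\nabla u)_{\#\mu}$; hence it is induced by the map $T_{\mu,\nu}:=\nabla u$, which satisfies $(T_{\mu,\nu})_{\#\mu}=\nu$, attains the Kantorovich value, and is therefore also optimal among transport maps --- establishing the claimed chain of equalities. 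Uniqueness $\mu$-a.e.\ follows because \emph{any} optimal plan is supported on a cyclically monotone set, which determines $y$ as a function of $x$ on the full-measure differentiability set. Finally, interchanging coordinates sends $\gamma^{*}$ to the (unique) optimal plan from $\nu$ to $\mu$, which by the same argument is $(id,\nabla\psi)_{\#\nu}$ for a convex $\psi$; reading off the two descriptions of this plan gives $T_{\nu,\mu}(T_{\mu,\nu}(x))=x$ for $\mu$-a.e.\ $x$, i.e.\ $T_{\mu,\nu}\circ T_{\nu,\mu}=id$ after relabeling.

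\textbf{Main obstacle.} The delicate part is the duality/cyclical-monotonicity step and the bookkeeping around it: establishing Kantorovich duality with attainment by a \emph{proper} convex potential, controlling integrability so the squared-norm terms split off cleanly (this is precisely where finite second moments enter), verifying that $\mu$ charges only the set where $u$ is differentiable, and handling the measurable-selection point needed to realize $\gamma^{*}$ as the pushforward by a Borel map. The passage from absolute continuity of $\mu$ to almost-everywhere differentiability of $u$ is standard but is the crux that produces a map rather than merely a plan --- without absolute continuity of $\mu$ the statement fails. For the full technical details I would follow \cite{brenier1991PolarFA, ambrosio2008gradientflow, Villani2008OptimalTO, santambrogio2014introduction} rather than reprove them here.
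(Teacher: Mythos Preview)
Your outline is the standard textbook proof of Brenier's theorem and is correct. The paper, however, does not prove this statement at all: it is listed in the appendix on Wasserstein geometry under the preamble ``we mainly introduce the known results without proofs'' and simply cites \cite{brenier1991PolarFA, Villani2008OptimalTO, ambrosio2008gradientflow, santambrogio2014introduction, chewi2024logconcave}. So there is nothing to compare against; your sketch (Kantorovich relaxation, cyclical monotonicity or duality to land on the subdifferential of a convex potential, then Rademacher plus absolute continuity of $\mu$ to single out a map) is exactly what those references do, and your closing remark that you would defer to them for the full details is in the same spirit as the paper's treatment.
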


Theorem \ref{thm:brenier} gives a notion of tangent direction at $\mu$.

\begin{defn}[Riemannian metric in 2-Wasserstein space]\label{defn_wasser_tangent}
For $\mu \in W_{2}(\bbR^d)$, a tangent space of $\mu$ is $T_{\mu} \calP_{2,ac}(\bbR^d) = \overline{\set{\nabla \psi \:\vert\: \psi \in C^{\infty}_{c}(\bbR^d)}}^{\calL^2(\mu)} \subset \calL^2(\mu)$. Here, $C^{\infty}_{c}(\bbR^d)$ is a set of compactly supported smooth functions on $\bbR^d$.
The Riemannian metric is defined as a $\calL^2(\mu)$-inner product. In other words, $\langle v, w \rangle_{\mu} = \bbE_{x \sim \mu} [\langle v(x), w(x)\rangle]$.
\end{defn}

\begin{rmk}[Interpretation of the tangent space]
    By Brenier theorem, $T_{\mu, \nu} = \nabla \phi$. For arbitrary $\lambda > 0$, it follows that $\lambda(T_{\mu, \nu} - id) = \nabla (\lambda \phi - \lambda\frac{\norm{\cdot}^2}{2}) \in T_{\mu}\calP_{2,ac}(\bbR^d)$. This implies that the tangent space $T_{\mu}\calP_{2,ac}(\bbR^d)$ can be interpreted as the set of scaled displacement fields $\lambda(T_{\mu, \nu} - id)$. If $X \sim \mu$ and $Y \sim \nu$, then $\lambda(T_{\mu, \nu} - id)(X) = \lambda (Y- X)$, which corresponds to directions in the usual Euclidean sense. From this perspective, the tangent space is naturally constructed to represent Euclidean directions at the level of individual particles.
\end{rmk}

One can naturally define a geodesic curve in $(\calP_{2,ac}(\bbR^d), W_2)$, by pushforwarding the interpolation between particles to the measure space.

\begin{defn}[Geodesic in Wasserstein space]\label{defn:wasser_geodesic}
    A geodesic curve $\gamma:[0,1] \to \calP_{2,ac}(\bbR^d)$ such that $\gamma(0) = \mu$ and $\gamma(1) = \nu$ can be defined as follows:
    \[
    \gamma(t) = \left((1-t)id + tT_{\mu, \nu}\right)_{\# \mu}.
    \]
\end{defn}


The exponential map and logarithmic map are then defined accordingly. 

\begin{defn}[Exponential map and Logarithmic map in Wasserstein space]\label{defn_wasser_explog}
    For $\mu, \nu \in \calP_{2,ac}(\bbR^d)$ and $v \in \calL^2(\mu)$, exponential map and logarithmic map of $(\calP_{2,ac}(\bbR^d) , W_2)$ are defined as follows:
    \begin{align*}
    exp_{\mu}(v) &= (v + id)_{\#\mu},\\
    \log_{\mu}(\nu) &= T_{\mu, \nu} - id.
    \end{align*}
\end{defn}

A favorable property of 2-Wasserstein space is that the exponential map (and accordingly logarithmic map) is globally well-defined on $\calL^2(\mu)$, \ie 2-Wasserstein space satisfies Assumption \ref{assumption:common}.

This Riemannian structure induces 2-Wasserstein metric. Observe the Riemannian distance induced from the above structure coincides with the Wasserstein distance; $d(\mu, \nu)^2 = \|\log_{\mu} \nu\|^2 = \norm{T_{\mu, \nu} - id}^2= W_2^2(\mu,\nu)$.

One can define a geodesic parallel transport as well. 

\begin{defn}\cite{ambrosio2008construction}[Parallel transport]\label{defn_wasser_parallel}
    For $\mu, \nu \in \calP_{2,ac}(\bbR^d)$ and $v \in T_{\mu} \calP_{2,ac}(\bbR^d)$,
    \[
    \Gamma_{\mu}^{\nu} v := \Pi_{\nu} (v \circ T_{\nu, \mu}).
    \]
    Here, $\Pi_{\cdot}$ is a projection operator $\calL^2(\cdot) \to T_{\cdot}\calP_{2,ac}(\bbR^d)$.
\end{defn}

This definition of parallel transport is not entirely satisfactory, as it involves the operator $\Pi_{\cdot}$ which lacks an explicit form. However, recall our analysis only requires the properties of parallel transport in Remark \ref{rmk:geodesic_parallel_transport_property}. It turns out that even if we drop $\Pi_{\cdot}$ and consider $\Gamma_{\mu}^{\nu}v = v \circ T_{\nu, \mu}$ as a parallel transport onto $\calL^2(\mu)$, the corresponding parallel transport still has properties in Remark \ref{rmk:geodesic_parallel_transport_property}, which are sufficient for our analyses.  

\begin{prop}[Transfer lemma]\label{prop:transfer}
    For $\mu, \nu \in \calP_{2,ac}(\bbR^d)$ and $v \in \calL^2(\mu)$, define $\Gamma_{\mu}^{\nu}v := v \circ T_{\nu, \mu}$. Then,
    \begin{enumerate}
        \item $\Gamma_{\mu}^{\nu}$ is linear operator on $\calL^2(\mu)$.
        \item $\Gamma_{\mu}^{\nu} \circ \Gamma_{\nu}^{\mu} = id$.
        \item $\innerprod{v}{w}_{\mu} = \innerprod{\Gamma_{\mu}^{\nu}v}{\Gamma_{\mu}^{\nu}w}_{\nu}$.
    \end{enumerate}
\end{prop}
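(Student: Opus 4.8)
\textbf{Proof proposal for Proposition~\ref{prop:transfer} (Transfer lemma).}

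The plan is to verify the three listed properties directly from the definition $\Gamma_{\mu}^{\nu}v = v \circ T_{\nu,\mu}$, using only elementary facts about optimal transport maps supplied by the Brenier theorem (Theorem~\ref{thm:brenier}) — in particular the identity $T_{\mu,\nu}\circ T_{\nu,\mu} = id$ (valid $\nu$-a.e., and with $T_{\nu,\mu}\circ T_{\mu,\nu}=id$ valid $\mu$-a.e.) and the change-of-variables/pushforward relation $(T_{\nu,\mu})_{\#}\nu = \mu$. Linearity (item 1) is immediate: for $v,w\in\calL^2(\mu)$ and scalars $a,b$, precomposition with the fixed map $T_{\nu,\mu}$ is linear, so $\Gamma_{\mu}^{\nu}(av+bw) = (av+bw)\circ T_{\nu,\mu} = a(v\circ T_{\nu,\mu}) + b(w\circ T_{\nu,\mu})$; one should also note that $v\circ T_{\nu,\mu}\in\calL^2(\nu)$, which follows from item~3 applied to $w=v$ (i.e. the norm is preserved), so the operator is well-defined with values in $\calL^2(\nu)$.

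For item 3, the isometry property, I would write $\innerprod{\Gamma_{\mu}^{\nu}v}{\Gamma_{\mu}^{\nu}w}_{\nu} = \bbE_{y\sim\nu}\big[\innerprod{v(T_{\nu,\mu}(y))}{w(T_{\nu,\mu}(y))}\big]$ and apply the change of variables $x = T_{\nu,\mu}(y)$: since $(T_{\nu,\mu})_{\#}\nu = \mu$, the pushforward formula gives $\bbE_{y\sim\nu}[g(T_{\nu,\mu}(y))] = \bbE_{x\sim\mu}[g(x)]$ for any integrable $g$, hence the expression equals $\bbE_{x\sim\mu}[\innerprod{v(x)}{w(x)}] = \innerprod{v}{w}_{\mu}$. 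Taking $v=w$ also establishes that $\Gamma_{\mu}^{\nu}$ maps $\calL^2(\mu)$ into $\calL^2(\nu)$, completing the well-definedness needed above.

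For item 2, I would compute $\Gamma_{\mu}^{\nu}\circ\Gamma_{\nu}^{\mu}$ on an arbitrary $v\in\calL^2(\mu)$: by definition $\Gamma_{\nu}^{\mu}v = v\circ T_{\mu,\nu}\in\calL^2(\nu)$, and then $\Gamma_{\mu}^{\nu}(\Gamma_{\nu}^{\mu}v) = (v\circ T_{\mu,\nu})\circ T_{\nu,\mu} = v\circ(T_{\mu,\nu}\circ T_{\nu,\mu}) = v\circ id = v$, where the penultimate equality uses the Brenier identity $T_{\mu,\nu}\circ T_{\nu,\mu} = id$ holding $\nu$-a.e.\ — and this is exactly the a.e.\ sense needed since equality in $\calL^2(\mu)$ is up to $\mu$-null sets. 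The only genuinely delicate point, and the one I would be most careful about, is bookkeeping of the almost-everywhere qualifiers: the two composition identities for the Brenier maps hold on different null sets (one $\mu$-a.e., the other $\nu$-a.e.), and one must make sure that each time a composition identity is invoked it is paired with the correct base measure so that the resulting equation is meaningful as an identity in the appropriate $\calL^2$ space; the pushforward relation $(T_{\nu,\mu})_{\#}\nu = \mu$ is what reconciles these, and it is the crux of item~3.
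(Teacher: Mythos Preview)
Your proposal is correct and follows essentially the same approach as the paper: linearity is immediate from precomposition, item~3 is the change-of-variables/pushforward identity, and item~2 follows from the Brenier inverse relation $T_{\mu,\nu}\circ T_{\nu,\mu}=id$. One small bookkeeping slip to fix in item~2: since $\Gamma_{\nu}^{\mu}:\calL^2(\nu)\to\calL^2(\mu)$ and $\Gamma_{\mu}^{\nu}:\calL^2(\mu)\to\calL^2(\nu)$, the composition $\Gamma_{\mu}^{\nu}\circ\Gamma_{\nu}^{\mu}$ acts on $\calL^2(\nu)$, so you should start with $v\in\calL^2(\nu)$ (not $\calL^2(\mu)$), obtain $\Gamma_{\nu}^{\mu}v\in\calL^2(\mu)$, and conclude equality in $\calL^2(\nu)$ --- the computation and the $\nu$-a.e.\ use of Brenier are then exactly as you wrote.
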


\begin{proof}
    Property 1 is direct: for $v, w \in \calL^2(\mu)$ and $a, b \in \bbR$, $\Gamma_{\mu}^{\nu} (av + bw) = a v \circ T_{\mu, \nu} + bw \circ T_{\mu, \nu} = a \Gamma_{\mu}^{\nu}v + b \Gamma_{\mu}^{\nu}w$.
    
    Property 2 is from Theorem \ref{thm:brenier}. 
    
    Property 3 is a direct consequence of the change of the measure formula: 
    \[
    \innerprod{v}{w}_{\mu} = \int \innerprod{v(x)}{w(x)}d (T_{\nu, \mu})_{\# \nu}(x) = \int \innerprod{v \circ T_{\nu, \mu}(x)}{w \circ T_{\nu, \mu}(x)}d\nu(x) = \innerprod{\Gamma_{\mu}^{\nu} v}{\Gamma_{\mu}^{\nu}w}_{\nu}.
    \]
\end{proof}

Therefore, by Proposition \ref{prop:transfer}, we can use the \emph{un-projected} parallel transport $\cdot \circ T_{\nu, \mu}$ as a parallel transport $\Gamma_{\mu}^{\nu} \cdot$ and $\calL^2(\mu)$ as the tangent space for our analysis. In fact, such parallel transport and tangent space are sufficient for other first-order Wasserstein gradient flow analyses as well (e.g., \cite{ambrosio2008gradientflow, salim2020wasserstein}).

Now, we introduce a sectional curvature in 2-Wasserstein space. To establish this, we introduce the continuity equation and the notion of covariant derivative in the 2-Wasserstein space.

\begin{defn}[Continuity equation]
    Let $\mu_t$ be a flow in $\calP_{2,ac}(\bbR^d)$. For given $\mu_t$, there exists a vector field $v_t \in \calL^2(\mu_t)$ such that
    \[
    \partial_t \mu_t = -\text{div}(\mu_t v_t).
    \]
    Such $v_t$ is called a (velocity) vector field of the flow $\mu_t$.
\end{defn}

One can think of $v_t$ as a velocity at $\mu_t$, and plays a similar role as $\gamma'(t)$ in Riemannian manifolds.

\begin{defn}[Covariant derivative]
    A covariant derivative of $w_t \in T_{\mu_t}\calP_{2,ac}(\bbR^d)$ along a curve $\mu_t$ is defined by the following formula:
    \[
    \nabla_{v_t} w_t = \Pi_{\mu_t} \left(\lim_{h \rightarrow 0}\frac{\Gamma_{\mu_{t}}^{\mu_{t+h}} w_{t+h} - w_t}{h} \right). 
    \]
    Here, $\Gamma$ is a parallel transport defined in Definition \ref{defn_wasser_parallel}, and $v_t$ is a vector field of the flow $\mu_t$.
\end{defn}

We are ready to introduce the result that 2-Wasserstein space is non-negatively curved.

\begin{lem}\label{thm_wasser_curv}
    Let $v_t, w_t$ be orthonormal elements in $T_{\mu_t}\calP_{2,ac}(\bbR^d)$. Then, the sectional curvature of the subspace spanned by these two tangent vectors is as follows:
    \[
    K_{\mu_t}(v_t, w_t) = 3\|\nabla v_t \cdot w_t - \nabla_{w_t} v_t\|_{\calL^2(\mu_t)}^{2}
    \]
    where the first $\nabla$ is Euclidean gradient, and the second $\nabla_{w_t}v_t$ is a covariant derivative.
\end{lem}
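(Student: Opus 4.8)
The plan is to derive the curvature formula from Otto's formal Riemannian submersion together with O'Neill's formula. Fix a reference measure $\mu_0$ (e.g.\ a standard Gaussian) and let $E = \calL^2(\mu_0;\bbR^d)$ be the flat Hilbert space of transport maps, with submersion $\pi(T) = T_{\#}\mu_0$. Over a point $\mu = T_{\#}\mu_0$, the differential $d\pi_T$ identifies the \emph{horizontal} subspace with the (closure of the) pulled-back gradient fields $\{(\nabla\phi)\circ T\}$ and the \emph{vertical} subspace $\ker d\pi_T$ with those $\xi$ for which $\xi\circ T^{-1}$ is $\mu$-divergence-free; on the horizontal subspace $d\pi_T$ is an isometry onto $T_\mu\calP_{2,ac}(\bbR^d)$, so $\pi$ is (formally) a Riemannian submersion. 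For orthonormal horizontal lifts $\tilde v_t,\tilde w_t$ of $v_t, w_t$, O'Neill's curvature identity reads
\[
K_{\mu_t}(v_t, w_t) = K_E(\tilde v_t, \tilde w_t) + \tfrac{3}{4}\,\bigl\|[\tilde v_t, \tilde w_t]^V\bigr\|_{\calL^2(\mu_0)}^{2},
\]
and since $E$ is flat, $K_E \equiv 0$; this already yields $K_{\mu_t}\ge 0$, and it remains to evaluate the vertical part of the bracket.

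The Lie bracket on the flat space $E$ is the difference of ordinary directional derivatives of the horizontal lifts. Writing $v_t = \nabla\phi$, $w_t = \nabla\psi$, at the base point one gets $[\tilde v_t,\tilde w_t] = \bigl(\nabla w_t\cdot v_t - \nabla v_t\cdot w_t\bigr) + (\text{a gradient field})$, the last summand collecting the variation of the potentials along the submersion, which remains a gradient field and is therefore horizontal. Projecting onto the vertical subspace (i.e.\ applying $\Pi^{\perp}_{\mu_t} = \mathrm{id} - \Pi_{\mu_t}$) kills that summand, so $[\tilde v_t,\tilde w_t]^V = \Pi^{\perp}_{\mu_t}\bigl(\nabla w_t\cdot v_t - \nabla v_t\cdot w_t\bigr)$. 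The decisive elementary identity is $\nabla^2\phi\,\nabla\psi + \nabla^2\psi\,\nabla\phi = \nabla\langle\nabla\phi,\nabla\psi\rangle$: the symmetrization of $\nabla v_t\cdot w_t$ is itself a gradient field, hence annihilated by $\Pi^{\perp}_{\mu_t}$, which forces $\Pi^{\perp}_{\mu_t}(\nabla w_t\cdot v_t) = -\Pi^{\perp}_{\mu_t}(\nabla v_t\cdot w_t)$ and thus $[\tilde v_t,\tilde w_t]^V = -2\,\Pi^{\perp}_{\mu_t}(\nabla v_t\cdot w_t)$. Finally, a short computation from the definition of the covariant derivative (using $T_{\mu_t,\mu_{t+h}} = \mathrm{id} + h v_t + o(h)$ and the parallel transport of Definition~\ref{defn_wasser_parallel}) gives $\nabla_{w_t} v_t = \Pi_{\mu_t}(\nabla v_t\cdot w_t)$, so $\Pi^{\perp}_{\mu_t}(\nabla v_t\cdot w_t) = \nabla v_t\cdot w_t - \nabla_{w_t}v_t$. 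Substituting, $\|[\tilde v_t,\tilde w_t]^V\|^2 = 4\,\|\nabla v_t\cdot w_t - \nabla_{w_t}v_t\|_{\calL^2(\mu_t)}^{2}$, and plugging into the O'Neill identity yields exactly $K_{\mu_t}(v_t,w_t) = 3\,\|\nabla v_t\cdot w_t - \nabla_{w_t}v_t\|_{\calL^2(\mu_t)}^{2}$.

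A self-contained alternative that stays inside the paper's formalism is to skip the submersion and compute $R$ directly: one first checks that $\nabla_v w = \Pi_\mu(\nabla w\cdot v)$ is the Levi-Civita connection on $T_\mu\calP_{2,ac}(\bbR^d)$ (torsion-free and metric-compatible), then expands $\langle R(v_t,w_t)w_t,v_t\rangle$ with $R(v,w)z = \nabla_v\nabla_w z - \nabla_w\nabla_v z - \nabla_{[v,w]}z$ along a flow $\mu_t$ — differentiating the time-dependent fields via the continuity equation $\partial_t\mu_t = -\mathrm{div}(\mu_t v_t)$ and using the orthogonal splitting $\calL^2(\mu_t) = T_{\mu_t}\calP_{2,ac}(\bbR^d)\oplus(\text{divergence-free fields})$ — where the same algebraic identity once more produces the factor $3$. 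Both routes recover the stated equality.

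I expect the main difficulty to be rigor rather than computation: $(\calP_{2,ac}(\bbR^d),W_2)$ is only a formal Riemannian manifold, so the submersion, O'Neill's formula, the differentiation of the potentials, and the limits defining parallel transport and the covariant derivative all need justification. A clean way to make this precise is to restrict to densities bounded above and below away from $0$ with smooth, sufficiently decaying potentials, where the maps $T$, their inverses, and the projections $\Pi_\mu$ onto gradient fields are genuinely differentiable; alternatively one can invoke the established formal-geometry computations of Lott and Gigli. Since the rest of the paper only uses non-negativity of the curvature — indeed, in the Wasserstein application even that is bypassed via optimality of the transport map in Lemma~\ref{lem:metric_distortion_in_wasserstein} — the exact formula can reasonably be recorded ``for completeness'' with a pointer to these references.
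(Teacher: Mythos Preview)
The paper does not actually prove this lemma: immediately after the statement it writes ``We refer to \cite{ambrosio2008construction}[Proposition 7.2] or \cite{lott2007geometric}[Corollary 5.13] for the derivation.'' Your O'Neill/submersion computation is precisely Lott's argument, so you have correctly reconstructed the proof behind the citation --- including the key identity $\nabla^2\phi\,\nabla\psi + \nabla^2\psi\,\nabla\phi = \nabla\langle\nabla\phi,\nabla\psi\rangle$ that makes the symmetric part horizontal and produces the factor $4$ (hence $3 = \tfrac{3}{4}\cdot 4$). Your closing suggestion, to record the formula ``for completeness'' with a pointer to the references, is exactly what the paper does.

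One small slip: in the line deriving $\nabla_{w_t} v_t = \Pi_{\mu_t}(\nabla v_t\cdot w_t)$ you expand the transport map as $T_{\mu_t,\mu_{t+h}} = \mathrm{id} + h v_t + o(h)$, but since you are differentiating along a curve with velocity $w_t$ this should read $\mathrm{id} + h w_t + o(h)$. The stated conclusion is unaffected.
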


We refer to \cite[Proposition 7.2] {ambrosio2008construction} or \cite[Corollary 5.13]{lott2007geometric} for the derivation.


The last ingredients we need for the analysis of the Wasserstein space are notions of gradient. The concept is defined as an analogous manner to the Riemannian case. Again, we omit the detail and just present the result.

Wasserstein gradient is defined analogously to the formula $d_v f(x) = \innerprod{\grad f(x)}{v}_{x}$ in Riemannian manifold. 

\begin{defn}[Wasserstein gradient]\label{defn:wasser_gradient}
    For a functional $\calF: \calP_{2,ac}(\bbR^d) \to \bbR$, the Wasserstein gradient of $\calF$ at $\mu_0$ is an element of $\calL^2(\mu_0)$ satisfying the following equation:
    \[
    \partial_t \calF(\mu_t) \big\vert_{t= 0} = \innerprod{\wgrad \calF(\mu_0)}{v_0}_{\mu_0}.
    \]
    Here $v_t$ is a vector field of the flow $\mu_t$.

    One has the following explicit formula:
    \[
    \wgrad \calF(\mu) = \nabla \frac{\delta \calF(\mu)}{\delta \mu}.
    \]
    Here, $\nabla$ is Euclidean gradient and $\frac{\delta \calF(\mu)}{\delta \mu}$ is the first variation.
\end{defn}

Here, the role of $\gamma'(0)$ is changed to $v_0$. For the derivation we refer to \cite[Theorem 1.4.1]{chewi2024logconcave}.

\subsubsection{Bures-Wasserstein geometry} \label{appendix_BW_geom}

In this appendix, we briefly introduce Bures-Wasserstein space $BW(\bbR^d)$, a space of Gaussian measures equipped with $W_2$ metric. Main takeaways of this appendix are as follow: 
\begin{enumerate}
    \item $BW(\bbR^{d})$ is a product Riemannian manifold with non-negative sectional curvature. 
    \item $BW(\bbR^d)$ is a geodesically convex subset of $(\calP_{2,ac}(\bbR^d), W_2)$ and totally geodesic submanifold. In this regard, we can take $N = BW(\bbR^d)$ for our algorithm.
    \item This example shows how one can parameterize the transport map to make the algorithm implementable as in Equation~\eqref{eq:bwasserstein_silver_gradient}, \eqref{eq:bwasserstein_silver_gradient_potential}.
    \item This example confirms that $BW(\bbR^d)$, and therefore the 2-Wasserstein space, do not admit the curvature upper bound. Consequently, existing acceleration methods requiring the curvature upper bound are not well-suited for solving the optimization problems in Wasserstein space.
\end{enumerate} 

Again, we briefly list the results. For detail, we refer to \cite{takatsu2009wasserstein, bhatia2019bureswasserstein, altschuler2021averaging, lambert2022variational, diao2023forwardbackward}.

\begin{defn}[Optimal transport map between Gaussian]\label{defn_bw_ot}
    The optimal transport map between $
    \mu_0 = N(m_0,\Sigma_0)$ and $\mu_1 = N(m_1, \Sigma_1)$ is defined as follows:
    \[
    T_{\mu_0,\mu_1}(x) = m_1 + \Sigma_0^{-1/2}(\Sigma_0^{1/2}\Sigma_1 \Sigma_{0}^{1/2})^{1/2}\Sigma_{0}^{-1/2}(x-m_0).
    \]
\end{defn}

Definition \ref{defn_bw_ot} saids the optimal transport map between Gaussians is an affine map. This fact provides two favorable results.

First, since affine transform of the Gaussian is also a Gaussian, from Definition \ref{defn:wasser_geodesic} every geodesic interpolation between two Gaussians is also Gaussian. This shows $BW(\bbR^d)$ is a geodesically convex subset of 2-Wasserstein space. In addition it implies $BW(\bbR^d)$ is totally geodesic submanifold of 2-Wasserstein space \citep[Exercise 8.4]{lee2018introduction}.

Second, we can identify $\mu = N(m, \Sigma) \cong (m, \Sigma) \in \bbR^d \times \SPD(d)$ and $T_{\mu}BW(\bbR^d) \cong (a, S) \in \bbR^d \times \Sym(d)$. Here, $\SPD(d)$ is the space of $\bbR^{d \times d}$ symmetric positive definite matrices, and $\Sym(d)$ is the space of $\bbR^{d \times d}$ symmetric matrices. By writing an affine map as $T(x) = a + S(x - m)$ for fixed $m$ (which is the mean of $\mu$), any affine map starting at $\mu = N(m, \Sigma)$ can be parameterized by $(a, S)$. Under this identification, we can view $BW(\bbR^d)$ space as a product Riemannian manifold of $\bbR^d \times \SPD(d)$ (Appendix \ref{appendix:product_riemannian}). Then one can parameterize every quantity in Appendix \ref{appendix:wasser_geom} by this product manifold sense. For instance, the vector corresponding to the optimal transport map is $(m_1, \Sigma_0^{-1/2}(\Sigma_0^{1/2}\Sigma_1 \Sigma_{0}^{1/2})^{1/2}\Sigma_{0}^{-1/2})$. 

Then, we can define Riemannian metric, exponential map, logarithmic map, and Bures-Wasserstein gradient in terms of parameters as well.

\begin{defn}[Riemannian metric of Bures-Wasserstein space]\label{defn_bw_riem_metric}
    Let $\mu = N(m, \Sigma)$. The Riemannian metric of $BW(\bbR^d)$ is define by
    \begin{align*} 
    \langle (a_0, S_0), (a_1, S_1) \rangle_{\mu} = \innerprod{a_0}{a_1}_{\bbR^d} + tr(S_0 \Sigma S_1).
    \end{align*}
\end{defn}

\begin{defn}\cite[Appendix B.3] {lambert2022variational}\label{defn_bw_explog}
    Let $\mu_i = N(m_i, \Sigma_i)$. The exponential map and a logarithm map in $BW(\bbR^d)$ are defined by
    \begin{align*}
    \exp_{\mu_0}((a,S)) &= N \left(a + m_0, (S + I)\Sigma_0 (S+I)\right),\\
    \log_{\mu_0}(\mu_1) &= (m_1 - m_0, \Sigma_0^{-1/2}(\Sigma_0^{1/2}\Sigma_1 \Sigma_{0}^{1/2})^{1/2}\Sigma_{0}^{-1/2} - I).
    \end{align*}
\end{defn}
Note the exponential map is only defined when $\det(S+I) \neq 0$ (more precisely, for all geodesic interpolations to be well-defined, one needs $-I \prec S$). Hence, Bures-Wasserstein space is \emph{not} geodesically complete.

\begin{defn}\cite[Appendix B.3] {lambert2022variational}\label{defn:bw_grad}
    Bures-Wasserstein metric of the functional $\calF$ can be written as a function on $\bbR^d \times \SPD(d)$, the space of the mean and covariance. Then, for $m \in \bbR$ and $\Sigma \in \SPD(d)$,
    \[
    \bwgrad \calF(m, \Sigma) = (\nabla_{m} \calF(m,\Sigma), 2\nabla_{\Sigma} \calF(m, \Sigma)).
    \]
\end{defn}
See \cite{lambert2022variational, diao2023forwardbackward} for further discussion.

Using the isometry between the function representation and the vector-matrix representation of $T_{p}BW(\bbR^d)$, we can define the following operation, which can be used to construct the (un-projected) parallel transport.

\begin{defn} \label{defn_bw_oper}
    For $(a, S) \in T_{\mu_1}BW(\bbR^d)$ and $(b,R) \in T_{\mu_0}BW(\bbR^d)$, we have the following operation.
    \[
    (a,S) \circ (b, R) = (a + Sb - Sm_1, SR).
    \]
    In particular,
    \[
    \Gamma_{(m_{1}, \Sigma_{1})}^{(m_{0}, \Sigma_{0})} (a, S) = (a, S\Sigma_0^{-1/2}(\Sigma_0^{1/2}\Sigma_1 \Sigma_{0}^{1/2})^{1/2}\Sigma_{0}^{-1/2}).
    \]
\end{defn}

Some works adopt an alternative definition of the Bures–Wasserstein metric; we make a remark that this definition is equivalent to the one we present here. This remark plays a pivotal role when we conduct actual calculation in $BW(\bbR^d)$ space (Appendix~\ref{appendix:functionals_generalized_geodesic_convexity}).

\begin{rmk}[Equivalent formulation of Bures-Wasserstein metric]\label{rmk:bw_equivalent}
    In some works (e.g., \cite{han2021riemannian}), $BW(\bbR^d)$ metric is defined as $\innerprod{(a,S)}{(b,R)}_{\mu} = \innerprod{a}{b}_{\bbR^d} + \frac{1}{2}\tr(L_{\Sigma}(S)R)$, where $L_{\Sigma}(S)$ is the Lyapunov operator defined via the solution of $L_{\Sigma}(S)\Sigma + \Sigma L_{\Sigma}(S) = S$. \purple{While it has the different form with what we introduced earlier, these two formulations turned out to be equivalent: our formulation is from Wasserstein perspective, and the other formulation is from Riemannian perspective.} In our setup, we define the tangent vector to directly parameterize the optimal transport map. That said, this does not directly fit with the Riemannian framework. For instance, if we consider the curve $\gamma(t) = \exp_{\mu}(t(a, S))$ defined by our exponential map, then the velocity at $t = 0$ is $\dot{\gamma}(0) = (a, S \Sigma + \Sigma S)$, which does not coincide with the tangent vector $(a,S)$. By contrast, under the Lyapunov operator based definition, the initial velocity is exactly $\dot{\gamma}(0) = (a, S)$. However, since there is a one-to-one correspondence between $S \Sigma + \Sigma S$ and $S$ for a given $\Sigma$, one may regard these two definitions as equivalent by identifying the tangent vector with $v_0 = S$ whenever the velocity $\dot{\gamma}(0) = S \Sigma + \Sigma S$ appears. One can change all corresponding quantities accordingly, and these two definitions turned out to be equivalent. We have chosen our formulation because it leads to a simpler algorithm \eqref{eq:bwasserstein_silver_gradient} that avoids solving the Lyapunov equation.
\end{rmk}

Lastly, we end up with the analysis of the curvature of $BW(\bbR^d)$. In particular, we show the result that even $BW(\bbR^d)$ space does not allow the curvature upper bound, indicating that the 2-Wasserstein space does not have the curvature upper bound as well. 

By applying Corollary \ref{cor_prod_sec_cur} and the flatness of Euclidean space, we obtain the following result:

\begin{cor}\label{cor_bw_curvature}
    For any $\mu \in BW(\bbR^d)$ and $\{(a, S), (b, R)\}$ orthonormal vectors in $T_{\mu}BW(\bbR^d) = \bbR^d \times \Sym(d)$,
    \[
    K_{BW(\bbR^d)}\left((a, S), (b, R)\right) = (\tr(S \Sigma S)\tr(R \Sigma R) - \tr(S \Sigma R)^2) K_{\Sym_{+}(\bbR^{d \times d})}(S, R).
    \]
\end{cor}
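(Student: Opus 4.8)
The plan is to apply the product-manifold sectional-curvature identity of Corollary~\ref{cor_prod_sec_cur} to the splitting $BW(\bbR^d) \cong \bbR^d \times \mathrm{SPD}(d)$ recorded earlier in this appendix, exploiting the fact that the first factor is flat. Under the identification $\mu = N(m,\Sigma) \cong (m,\Sigma)$ and $T_\mu BW(\bbR^d) \cong \bbR^d \times \mathrm{Sym}(d)$, the Bures--Wasserstein metric of Definition~\ref{defn_bw_riem_metric},
\[
\innerprod{(a_0,S_0)}{(a_1,S_1)}_\mu = \innerprod{a_0}{a_1}_{\bbR^d} + \tr(S_0 \Sigma S_1),
\]
is precisely a product metric $g = g_1 \oplus g_2$ in the sense of Definition~\ref{defn_prod_mfd}: $g_1$ is the standard (flat) Euclidean metric on $\bbR^d$, which does not depend on $\Sigma$, and $g_2$ is the metric on $\mathrm{SPD}(d)$ given at the point $\Sigma$ by $g_2(S,R) = \tr(S\Sigma R)$, which does not depend on $m$. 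Hence Corollary~\ref{cor_prod_sec_cur} applies with $K_1 \equiv 0$ (flatness of $\bbR^d$) and $K_2 = K_{\mathrm{Sym}_+(\bbR^{d\times d})}$.

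Carrying this out, I would write the two orthonormal tangent vectors as $(a,S)$ and $(b,R)$, so in the notation of Corollary~\ref{cor_prod_sec_cur} one takes $u_1 = a,\ v_1 = b$ in the $\bbR^d$ factor and $u_2 = S,\ v_2 = R$ in the $\mathrm{SPD}(d)$ factor. The corollary then gives
\[
K_{BW(\bbR^d)}\big((a,S),(b,R)\big) = A_1\, K_1(a,b) + A_2\, K_{\mathrm{Sym}_+(\bbR^{d\times d})}(S,R),
\]
with $A_1 = \norm{a}^2\norm{b}^2 - \innerprod{a}{b}^2$ and $A_2 = g_2(S,S)\,g_2(R,R) - g_2(S,R)^2$. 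Since $K_1 \equiv 0$, the first summand vanishes irrespective of $A_1$, and substituting $g_2(S,R) = \tr(S\Sigma R)$ into $A_2$ yields $A_2 = \tr(S\Sigma S)\tr(R\Sigma R) - \tr(S\Sigma R)^2$, which is exactly the claimed coefficient. This establishes the identity.

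The argument is essentially mechanical once Corollary~\ref{cor_prod_sec_cur} is in hand, so there is no genuine obstacle; the only point warranting a moment's care is confirming that the Bures--Wasserstein metric decomposes as an honest product metric — i.e.\ that the $\bbR^d$ block of $g$ is independent of $\Sigma$ and the $\mathrm{SPD}(d)$ block is independent of $m$ — which is immediate from the displayed formula, together with the fact (noted earlier in this appendix) that $BW(\bbR^d)$ is a totally geodesic submanifold of the $2$-Wasserstein space, so that the intrinsic curvature computed here is the relevant one. As a byproduct, since $A_2 \geq 0$ by Cauchy--Schwarz, the identity reduces non-negativity of the sectional curvature of $BW(\bbR^d)$ to non-negativity of $K_{\mathrm{Sym}_+(\bbR^{d\times d})}$, and likewise shows that $BW(\bbR^d)$ inherits the absence of a curvature upper bound from $\mathrm{SPD}(d)$ under the Bures--Wasserstein metric.
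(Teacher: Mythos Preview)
Your proposal is correct and follows essentially the same approach as the paper: apply the product-manifold sectional-curvature formula of Corollary~\ref{cor_prod_sec_cur} to $BW(\bbR^d)\cong \bbR^d\times \mathrm{SPD}(d)$ and use flatness of the Euclidean factor to kill the $A_1 K_1$ term. The paper's proof is just the one-line remark ``by applying Corollary~\ref{cor_prod_sec_cur} and the flatness of Euclidean space,'' so your write-up is simply a more detailed version of the same argument.
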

Therefore, to analyze the curvature of $BW(\bbR^d)$, it is sufficient to analyze the space of positive definite matrices, without accounting for the mean component. In this regard, without the loss of generality we consider $\mu = N(0, \Sigma)$. Then, since $\Sigma$ is a symmetric positive definite matrix, it is diagonalizable, and therefore we can write $\Sigma = P D(\lambda_i)P^{T}$ with $P$ being an orthogonal matrix and all real positive eigenvalues $\lambda_i$. Then, it is known that $\Sym(d)$ is spanned by the following orthonormal basis \cite{takatsu2009wasserstein}:
\begin{align*}
\left\{ e_{+} = \frac{P(E_{11} + E_{dd})P^{T}}{\sqrt{\lambda_1 + \lambda_d}},  e_{ij} = \frac{P(E_{ii} - E_{jj})P^{T}}{\sqrt{\lambda_i + \lambda_j}},  f_{ij} = \frac{P(E_{ij} + E_{ji})P^{T}}{\sqrt{\lambda_i + \lambda_j}}\right\}_{1 \leq i,j \leq d}
\end{align*}
where $E_{ij}$ is a matrix with only its $(i,j)$ entry is 1 and 0 otherwise.

Using this orthonormal basis, we can characterize all of the sectional curvature in $\SPD(d)$ as follows:

\begin{lem}\cite{takatsu2009wasserstein}[Sectional curvature of Bures-Wasserstein space]\label{thm_bw_sectional_curvature}

\begin{align*}
K(e_{+}, f_{ij}) &= \frac{3\lambda_i \lambda_j}{(\lambda_i + \lambda_j)^2 (\lambda_1 + \lambda_d)} \qquad (i = 1 \text{ or } j = d),\\
K(e_{ik}, f_{ij}) &= \frac{3\lambda_i \lambda_j}{(\lambda_i + \lambda_j)^2 (\lambda_i + \lambda_k)} \qquad (j \neq k),\\
K(e_{ij}, f_{ij}) &= \frac{12\lambda_i \lambda_j}{(\lambda_i + \lambda_j)^3},\\
K(f_{ij}, f_{ik}) &= \frac{3\lambda_j \lambda_k}{(\lambda_i + \lambda_j)(\lambda_j + \lambda_k) (\lambda_i + \lambda_k)} \qquad (j \neq k),\\
K(\text{any other combinations}) &= 0.
\end{align*}

\end{lem}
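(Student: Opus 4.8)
The plan is to deduce the statement directly from Lemma~\ref{thm_wasser_curv}, specialized to the Gaussian submanifold. Since $BW(\bbR^d)$ is totally geodesic in $(\calP_{2,ac}(\bbR^d),W_2)$, its sectional curvature on a $2$-plane equals the Wasserstein sectional curvature on that plane (Gauss equation, vanishing second fundamental form); equivalently, Lemma~\ref{thm_wasser_curv} is already a pointwise formula and the linear velocity fields below span a genuine $2$-plane in $T_\mu BW(\bbR^d) \cong \mathrm{Sym}(d)$, so it suffices to evaluate its right-hand side. Because sectional curvature is invariant under the isometry $\Sigma\mapsto P^\top\Sigma P$, we may take $P=I$, i.e.\ $\Sigma=\mathrm{diag}(\lambda_1,\dots,\lambda_d)$, so the orthonormal basis is $\{e_+,e_{ij},f_{ij}\}$ built from the standard matrix units $E_{pq}$.

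\textbf{Reduction to a matrix computation.} A tangent vector $S\in\mathrm{Sym}(d)$ at $\mu=N(0,\Sigma)$ is the linear velocity field $v_S(x)=Sx$, and $\innerprod{v_{S_0}}{v_{S_1}}_\mu=\tr(S_0\Sigma S_1)$, so the basis above is orthonormal. Fix basis elements $A,B\in\mathrm{Sym}(d)$. The Euclidean Jacobian of $v_A$ is the constant matrix $A$, so the first term in Lemma~\ref{thm_wasser_curv} is $\nabla v_A\cdot v_B=(x\mapsto ABx)$. For the second term, computing the Wasserstein covariant derivative of the (time-independent) field $v_A$ along the flow of $v_B$ — transporting via Definition~\ref{defn_wasser_parallel} and Proposition~\ref{prop:transfer} and differentiating at $t=0$ — gives $\nabla_{v_B}v_A=\Pi_\mu(x\mapsto ABx)$, the $\calL^2(\mu)$-orthogonal projection of $x\mapsto ABx$ onto gradient fields. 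Hence
\[
  \nabla v_A\cdot v_B-\nabla_{v_B}v_A \;=\; \Pi_\mu^{\perp}(x\mapsto ABx)\;=\;(x\mapsto Yx),
\]
where $Y\in\bbR^{d\times d}$ is the unique solution of the Lyapunov equation $Y\Sigma+\Sigma Y=\Sigma(AB-BA)=\Sigma[A,B]$; note $[A,B]$ is antisymmetric, which is the source of non-negativity. By Lemma~\ref{thm_wasser_curv}, $K(v_A,v_B)=3\,\tr(Y^\top Y\Sigma)$.

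\textbf{Evaluation on the basis.} Using $E_{pq}E_{rs}=\delta_{qr}E_{ps}$, compute $[A,B]$ for all pairs from $\{e_+,e_{ij},f_{ij}\}$: commutators of two diagonal matrices vanish; $[\text{diagonal},f_{pq}]\propto E_{pq}-E_{qp}$; $[f_{pq},f_{pr}]\propto E_{qr}-E_{rq}$ for $q\neq r$; and $[A,B]=0$ whenever $A,B$ are supported on disjoint index sets — so $K=0$ for every pair not listed. For each surviving pair $[A,B]=c\,(E_{pq}-E_{qp})$ for an explicit constant $c$ (carrying the normalizations $1/\sqrt{\lambda_i+\lambda_j}$, together with an extra factor $2$ in the $\{e_{ij},f_{ij}\}$ case, which produces the coefficient $12$). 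Solving $Y\Sigma+\Sigma Y=c(\lambda_pE_{pq}-\lambda_qE_{qp})$ entrywise gives $Y=c(\lambda_pE_{pq}-\lambda_qE_{qp})/(\lambda_p+\lambda_q)$, whence $\tr(Y^\top Y\Sigma)=c^2\lambda_p\lambda_q/(\lambda_p+\lambda_q)$; substituting the four values of $c$ reproduces the four displayed formulas.

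\textbf{Main obstacle and an alternative.} The delicate point is (i) identifying $\nabla v\cdot w-\nabla_w v$ as the Helmholtz projection $\Pi_\mu^{\perp}(\mathrm{Jac}(v)\,w)$, determined by a Lyapunov equation rather than by the bare matrix commutator field $[A,B]x$ (the two have different $\calL^2(\mu)$-norm unless $\Sigma$ is scalar), and reconciling the metric normalizations of Remark~\ref{rmk:bw_equivalent} so that the sharp constants — in particular the overall factor $3$ — come out correctly; and (ii) the bookkeeping across the several families of basis pairs, tracking the $\lambda$-weights through each Lyapunov solve. A cleaner route worth recording is Takatsu's original one: $BW(\bbR^d)\cong SPD(d)$ is the base of the Riemannian submersion $\Phi\colon M\mapsto MM^\top$ from $(GL(d),\text{Frobenius metric})$, which is flat, so O'Neill's formula gives $K_{\mathrm{base}}(X,Y)=3\|A_XY\|^2$ with $A_XY=\tfrac12[\widetilde X,\widetilde Y]^{\mathrm{vert}}$; non-negativity is then immediate, and the four formulas drop out of computing vertical components of brackets of horizontal lifts at $M=\Sigma^{1/2}$, at the cost of setting up the horizontal distribution explicitly.
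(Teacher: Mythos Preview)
The paper does not prove this lemma at all: it is quoted verbatim from \cite{takatsu2009wasserstein} and followed only by the remark ``See \cite{takatsu2009wasserstein} for more discussions,'' so there is no in-paper argument to compare against. Your proposal therefore supplies strictly more than the paper does.

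Your derivation is correct. The key identification $\nabla v_A\cdot v_B-\nabla_{v_B}v_A=\Pi_\mu^{\perp}(x\mapsto ABx)$ is exactly right (the Wasserstein Levi--Civita connection is the $L^2(\mu)$-projection of the flat directional derivative), and for a centered Gaussian the orthogonal complement of symmetric linear fields is characterized by $Y\Sigma+\Sigma Y^T=0$, which together with $Y-Y^T=[A,B]$ yields the Lyapunov equation you wrote. I spot-checked the $\{e_{ij},f_{ij}\}$ and $\{f_{ij},f_{ik}\}$ cases through your formula $K=3c^2\lambda_p\lambda_q/(\lambda_p+\lambda_q)$ and they reproduce the stated values, including the factor $12$ from the extra $2$ in $[E_{ii}-E_{jj},E_{ij}+E_{ji}]=2(E_{ij}-E_{ji})$.

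On the comparison with Takatsu's original argument (your ``alternative''): the two routes are the same mechanism viewed through different submersions. Lemma~\ref{thm_wasser_curv} is itself O'Neill's formula for the flat submersion $L^2(\mu)\to\calP_{2,ac}(\bbR^d)$, and restricting to linear velocity fields specializes it to the Gaussian slice; Takatsu instead applies O'Neill directly to the finite-dimensional flat submersion $GL(d)\to SPD(d)$, $M\mapsto MM^\top$. In both cases the curvature is $3\|\text{vertical part of a bracket}\|^2$, and the Lyapunov solve you perform is exactly the computation of the vertical projection at $\Sigma^{1/2}$. So your main route and your stated alternative are two presentations of one calculation; the advantage of yours is that it stays entirely within the objects already set up in the paper (Lemma~\ref{thm_wasser_curv}, Definition~\ref{defn_wasser_parallel}), while Takatsu's buys a cleaner linear-algebraic setup at the cost of introducing the $GL(d)$ total space.
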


This explicit form indicates that the curvature upper bound at $\mu$ depends on the smallest eigenavalue of the covariance matrix $\Sigma$. Since the space of symmetric positive definite matrices does not have the uniform positive eigenvalue lower bound, $BW(\bbR^d)$ does not have the uniform curvature upper bound. See \cite{takatsu2009wasserstein} for more discussions on the sectional curvature of $BW(\bbR^d)$ space.

\purple{In general, the curvature of a submanifold and the curvature of its ambient manifold needs not be the same. However, if the submanifold is totally geodesic, by Gauss formula \citep[Theorem 8.2]{lee2018introduction} and the fact that the second fundamental form vanishes \citep[Exercise 8.4]{lee2018introduction}, the curvature of the submanifold coincides to the curvature of the ambient manifold. Since $BW(\bbR^d)$ is a totally geodesic submanifold of the 2-Wasserstein space \citep{chen2020otng}, Lemma~\ref{thm_bw_sectional_curvature} implies that 2-Wasserstein space also does not have the sectional curvature upper bound.}

\section{Deferred proofs}

\subsection{Deferred proofs for Section~\ref{section:riem_silver}}\label{appendix:proof_of_proposition_inequaltiy}

\begin{proof}[Proof of Proposition~\ref{prop:generalized_geodesic_and_f_ineq_mainbody_one}]\label{appendix:proof_of_proposition_inequaltiy}
If $f$ is $\vt$-geodesically smooth with base $b \in M$ and $\vt$-geodesically convex, then we take 
\[
z = \exp_{b}\left(-\frac{1}{L}\left(\Gamma_{y}^{b}\grad f(y) - \Gamma_{x}^{b}\grad f(x)\right) + \log_b y\right).
\]
$z \in N$ from the condition we assumed. Then,
\begin{align*}
    &f(x) - f(y) 
    = f(x) - f(z) + f(z) - f(y) \\
    &\quad \leq -\innerprod{\vt_x^b \grad f(x)}{\log_b z - \log_b x}_b 
    + \innerprod{\vt_{y}^b \grad f(y)}{\log_b z - \log_b y}_b 
    + \frac{L}{2}\norm{\log_b z -\log_b y}_b^2\\
    &\quad = \innerprod{\vt_x^b \grad f(x)}{\log_b x - \log_b y}_b 
    - \innerprod{\vt_x^b \grad f(x)}{\log_b z - \log_b y}_b\\
    &\qquad +\innerprod{\vt_{y}^b \grad f(y)}{\log_b z - \log_b y}_b
    +\frac{L}{2}\norm{\log_b z - \log_b y}_b^2\\
    &\quad = \innerprod{\vt_x^b \grad f(x)}{\log_b x - \log_b y}_b
    -\frac{1}{2L}\norm{\vt_y^b \grad f(y) - \vt_x^b \grad f(x)}_b^2.
\end{align*}
\end{proof}

\subsubsection{Additional discussions on Proposition~\ref{prop:generalized_geodesic_and_f_ineq_mainbody_one}}\label{appendix:additional_discussions_on_co_coercivity}

In Euclidean space, the important property of convex and $L$-smooth function is that the following inequality holds \citep[Theorem 2.1.5]{nesterov2014introductory}:
    \begin{align}\label{eq:co_coercivity}
    f(y) - f(x) - \innerprod{\nabla f(x)}{y - x} - \frac{1}{2L}\norm{\nabla f(y) - \nabla f(x)}^2 &\geq 0.
    \end{align}
    Such inequality is sometimes referred as co-coercivity type inequality or interpolating inequality, and works as the core inequality for some optimization algorithms, e.g. algorithms based on Performance Estimation Problem (PEP) \citep{TaylorHendrickxGlineur2017WorstCaseStrongConvex, altschuler2024accelerationII} or some recent adaptive methods \citep{ suh2025adaptiveparameterfreenesterovsaccelerated}.
A natural Riemannian analogue of \eqref{eq:co_coercivity} would be written as follows:
\begin{align}\label{eq:riem_co_coercivity}
    f(y) - f(x) - \innerprod{\grad f(x)}{\log_x y} - \frac{1}{2L}\norm{\Gamma_y^x \grad f(y) - \grad f(x)}^2 \geq 0.    
\end{align}
We show the same proof strategy in Appendix~\ref{appendix:deferred_proof_section_riem_silver} can be applied to obtain the sufficient condition for \eqref{eq:riem_co_coercivity}.

In fact, we provide more general result. We show, if $f$ is generalized geodesically convex and for all $x, y \in N$, and $f$ satisfies $z := \exp_{y}\left(-\frac{1}{L}\left(\grad f(y) - \Gamma_{x}^{y} \grad f(x)\right)\right) \in N$, then we show \eqref{eq:riem_co_coercivity} is equivalent to geodesic $L$-smoothness. Hence, generalized geodesic convexity and geodesic smoothness, under additional technical assumption, imply \eqref{eq:riem_co_coercivity}.

For this result, we will use the $L$-Lipchitz gradient for the definition of geodesic $L$-smoothness, as defined in Definition~\ref{defn:geo_convex_and_smooth_general}, Recall $L$-Lipchitz gradient implies quadratic upper bound (Definition~\ref{def:geosmooth}) by Lemma~\ref{lem:descent_lemma}. 

We need to introduce the notion of \emph{co-coercivity}.

\begin{defn}[Geodesic co-coercivity]
    A differentiable function $f:N \to \bbR$ is called geodesically co-coercive if for all $x, y \in N$
    \[
    \innerprod{\Gamma_y^x \grad f(y) - \grad f(x)}{\log_x y} \geq \frac{1}{L}\norm{\Gamma_y^x \grad f(y) - \grad f(x)}^2.
    \]
\end{defn}
The geodesic co-coercivity condition links $L$-smoothness and \eqref{eq:riem_co_coercivity}. The next lemma is a general version of Proposition \ref{prop:generalized_geodesic_and_f_ineq_mainbody_one}, which shows the relationship between $L$-smoothness, co-coercivity, and \eqref{eq:riem_co_coercivity}.

\begin{lem}\label{lem:generalized_geodesic_and_f_inequality}
For a differentiable function $f: N \to \bbR$, The below relationship holds:
\[
\text{\eqref{eq:riem_co_coercivity} $\overset{\text{(i)}}{\Rightarrow}$ geodesic co-coercivity $\overset{\text{(ii)}}{\Rightarrow}$ geodesic $L$-smoothness}.
\]
In addition, suppose for all $x, y \in N$, $f$ satisfies $z := \exp_{y}\left(-\frac{1}{L}\left(\grad f(y) - \Gamma_{x}^{y} \grad f(x)\right)\right) \in N$. Then, if $f$ is generalized geodesically convex, 
\[
\text{geodesic $L$-smoothness $\overset{\text{(iii)}}{\Rightarrow}$ \eqref{eq:riem_co_coercivity}}.
\]
\end{lem}

\begin{proof}
    \textbf{(i)}: By applying \eqref{eq:riem_co_coercivity} for $(x,y)$ and $(y,x)$ and using Lemma \ref{lem:parallel_transport_conversion}, one gets
    \begin{align*}
        f(y) - f(x) - \innerprod{\grad f(x)}{\log_x y} - \frac{1}{2L}\norm{\Gamma_y^x \grad f(y) - \grad f(x)}^2 &\geq 0,\\
        f(x) - f(y) + \innerprod{\Gamma_y^x \grad f(y)}{\log_x y} - \frac{1}{2L}\norm{\Gamma_y^x \grad f(y) - \grad f(x)}^2 &\geq 0.
    \end{align*}
    Summing up two inequalities, one gets
    \[
    \innerprod{\Gamma_y^x \grad f(y) - \grad f(x)}{\log_x y} \geq \frac{1}{L}\norm{\Gamma_y^x \grad f(y) - \grad f(x)}^2.
    \]

    \textbf{(ii)}: Using Cauchy-Schwartz inequality on the co-coercivity condition, one gets
    \[
    \frac{1}{L}\norm{\Gamma_y^x \grad f(y) - \grad f(x)}^2 \leq \norm{\Gamma_{y}^{x} \grad f(y) - \grad f(x)} \norm{\log_x y}.
    \]
    Since $\norm{\log_x y} = d(x,y)$, one gets the result.

    \textbf{(iii)}: We follow the same proof strategy as in the proof of Proposition~\ref{prop:generalized_geodesic_and_f_ineq_mainbody_one}. Take $z = \exp_{y}\left(-\frac{1}{L}\left(\grad f(y) - \Gamma_{x}^{y} \grad f(x)\right)\right)$. Write $f(x) - f(y) = f(x) - f(z) + f(z) - f(y)$. Then, using generalized geodesic convexity with base $y$ and Lemma \ref{lem:descent_lemma}, 
    \begin{align*}
        f(x) - f(y) &= f(x) - f(z) + f(z) - f(y)\\
        &\leq -\innerprod{\Gamma_{x}^{y} \grad f(x)}{\log_y z - \log_y x} + \innerprod{\grad f(y)}{\log_y z} + \frac{L}{2}\norm{\log_y z}^2\\
        &= -\innerprod{\Gamma_x^y \grad f(x)}{-\frac{1}{L}(\grad f(y) - \Gamma_{x}^{y} \grad f(x)) - \log_y x}\\
        &\quad +\innerprod{\grad f(y)}{-\frac{1}{L}(\grad f(y) - \Gamma_{x}^{y} \grad f(x))}\\
        &\quad + \frac{1}{2L}\norm{\grad f(y) - \Gamma_{x}^{y} \grad f(x)}^2\\
        &= \innerprod{\Gamma_{x}^{y}\grad f(x)}{\log_y x}- \frac{1}{2L}\norm{\grad f(y) - \Gamma_{x}^{y} \grad f(x)}^2\\
        &= -\innerprod{\grad f(x)}{\log_x y}- \frac{1}{2L}\norm{\Gamma_y^x \grad f(y) - \grad f(x)}^2.
    \end{align*}
    Here, we again used Lemma \ref{lem:parallel_transport_conversion} for the last equality. This is equivalent to the desired inequality.
\end{proof}

\subsection{Deferred proofs for Section \ref{section:main_result}}\label{appendix:deferred_proof_section_riem_silver}

This appendix contains the proofs of Section \ref{section:main_result}. While the overall proofs follow \cite{altschuler2024accelerationII}, to clarify that the Riemannian settings are properly taken into accout, we provide the full explicit proofs.

Before we proceed, we introduce simplified formulation of $Q_{ij;b}$. By simply expanding the squared norm term, one can write $Q_{ij;b}$ as follows:
\begin{align*}
    Q_{ij;b} &= 2f(x_i) - 2f(x_j) - 2\innerprod{\vt_{x_j}^{b}\grad f(x_j)}{\log_{b}x_i - \log_{b}x_j}_{b}\\
    &\quad - \norm{\vt_{x_i}^{b}\grad f(x_i)}_{x_i}^2 - \norm{\vt_{x_j}^{b}\grad f(x_j)}_{x_j}^2 + 2 \innerprod{\vt_{x_j}^{b}\grad f(x_j)}{\vt_{x_i}^{b}\grad f(x_i)}_{b}.
\end{align*}
This formulation will be used frequently for the rest of the proof.

In addition, we also use the following formula for the intermeidate calculation.

\begin{lem}\label{lem:expanding_square_term}
The following equality holds.
\begin{align*}
    &\norm{\log_{b}x_* - \log_{b}x_n + (2r_k)^{-1} \vt_{x_n}^{b} \grad f(x_n)}_{b}^{2}
     -\norm{\log_{b}x_* - \log_{b}x_0}_{b}^2.\\
     &=(4r_k^2)^{-1}\norm{\vt_{n}^{b}\grad f(x_{n})}_{b}^{2} 
     +r_{k}^{-1} \innerprod{\vt_{x_n}^{b}\grad f(x_n)}{\log_{b}x_*- \log_{b}x_n}_{b}\\
     &\qquad +\sum_{i=0}^{n-1}\eta_i^2 \norm{\vt_{i}^{b}\grad f(x_i)}_{b}^{2}
     +2\sum_{i=0}^{n-1} \eta_i \innerprod{\vt_{x_i}^{b}\grad f(x_i)}{\log_{b}x_* - \log_{b}x_i}_{b}
\end{align*}
\end{lem}

\begin{proof}[Proof of Lemma \ref{lem:expanding_square_term}]
    \begin{align*}
        LHS &= -\norm{\log_{b}x_* - \log_{b}x_0}_{b}^{2} 
        + \norm{\log_{b}x_* - \log_{b}x_n}_{b}^{2}
        + \frac{1}{4r_k^2} \norm{\vt_{n}^{b}\grad f(x_n)}_{b}^{2} \\
        &\quad + \frac{1}{r_k}\innerprod{\log_{b}x_* - \log_{b}x_n}{\vt_{n}^{b}\grad f(x_n)}_{b}\\
        &= -\norm{\log_{b}x_* - \log_{b}x_0}_{b}^{2}
        + \norm{\log_{b}x_* - \log_{b}x_{n-1}}_b^2 
        + \norm{\log_{b}x_n - \log_{b}x_{n-1}}_b^2\\
        &\quad -2\innerprod{\log_{b}x_* 
        - \log_{b}x_{n-1}}{\log_{b}x_n - \log_{b}x_{n-1}}_b\\
        &\quad +\frac{1}{4r_k^2}\norm{\vt_{n}^{b}\grad f(x_n)}_b^2 + \frac{1}{r_k}\innerprod{\log_{b}x_* - \log_{b}x_n}{\vt_{x_n}^{b}\grad f(x_n)}_b\\
        &\overset{\text{(i)}}{=} -\norm{\log_{b}x_* - \log_{b}x_0}_{b}^{2}
        + \norm{\log_{b}x_* - \log_{b}x_{n-1}}_b^2 + \eta_{n-1}^2\norm{\vt_{n-1}^{b}\grad f(x_{n-1})}_b^2\\
        &\quad + 2\eta_{n-1}\innerprod{\log_{b}x_* - \log_{b} x_{n-1}}{\vt_{x_{n-1}}^{b}\grad f(x_{n-1})}_b\\
        &\quad + \frac{1}{4r_k^2} \norm{\vt_{n}^{b}\grad f(x_n)}_{b}^{2} + \frac{1}{r_k}\innerprod{\log_{b}x_* - \log_{b}x_n}{\vt_{n}^{b}\grad f(x_n)}_{b}
    \end{align*}
    For (i), we used $\log_{b}x_{i} - \log_{b}x_{i-1} = -\eta_{i-1}\vt_{x_{i-1}}^{b}\grad f(x_{i-1})$.

    Now, do the same decomposition on $\log_{b}x_* - \log_{b}x_{n-1}$ by $(\log_{b}x_* - \log_{b}x_{n-2}) - (\log_{b}x_{n-1} - \log_{b}x_{n-2})$ and use $\log_{b}x_{i} - \log_{b}x_{i-1} = -\eta_{i-1}\vt_{x_{i-1}}^{b}\grad f(x_{i-1})$. Iteratively conducting this procedure until $x_0$ yields the desired claim.
\end{proof}

The following lemma is the main component of the proof of Theorem~\ref{thm:silver_stepsize_riemannian_general}. Without loss of generality, set $L= 1$. We also set $n = 2^k - 1$ for some $k \in \bbN$.

\begin{lem}\label{lem:goal_formula_riemconvex_smooth}
 Let the conditions of Theorem~\ref{thm:silver_stepsize_riemannian_general} be true. Then, for suitably chosen $\lambda_{ij}\geq 0$,
\begin{align}\label{eq:goal_formula_riemconvex_smooth}
    \begin{split}
        \sum_{i,j=0,\dots,n,*} \lambda_{ij} Q_{ij;b} &= \frac{1}{r_k}(f(x_*) - f(x_n))
        + \norm{\log_b x_* - \log_b x_0}_b^2 \\
        &\qquad -\norm{\log_{b}x_* - \log_b x_n + \frac{1}{2r_k}\vt_{x_n}^b\grad f(x_n)}_b^2.
    \end{split}
\end{align}
\end{lem}
Once we establish Lemma~\ref{lem:goal_formula_riemconvex_smooth}, the proof of Theorem~\ref{thm:silver_stepsize_riemannian_general} is direct.

\begin{proof}[Proof of Theorem~\ref{thm:silver_stepsize_riemannian_general}]
    First consider the case $L = 1$. By Lemma~\ref{lem:goal_formula_riemconvex_smooth} and \eqref{eq:f_inequality_condition}, one gets
    \[
    f(x_n) - f(x_*) \leq r_k \norm{\log_b x_* - \log_b x_0}_b^2.
    \]
    This proves the desired convergence rate for $L = 1$.

    For general $L$, let $g = \frac{1}{L}f$. Then, by the linearity of the vector transport (which is assumed) and the Riemannian gradient, $g$ satisfies \eqref{eq:f_inequality_condition} with $L = 1$. By applying $L = 1$ case on $g$, one gets
    \[
    \frac{1}{L}\left(f(x_n) - f(x_*)\right) = g(x_n) - g(x_*) \leq r_k \norm{\log_{b}x_0 - \log_{b}x_*}_b^2.
    \]
\end{proof}

To prove Lemma~\ref{lem:goal_formula_riemconvex_smooth}, as in \cite{altschuler2024accelerationII} we will prove it by induction.

We begin with the base step of the induction. 
\paragraph{Base Step}\label{section:small_step_formulation}
First, we show \eqref{eq:goal_formula_riemconvex_smooth} is valid for $n = 1$ ($k=1$). 

\begin{lem}\label{thm:2_step_riem_grad_convex_case}
For any arbitrary initialization $x_0 \in N$, consider the following VTRGD update \eqref{eq:VTRGD}.
\begin{align*}
    x_1 = \exp_{b}\left(\log_{b}x_0-\eta_0 \vt_{x_0}^{b} \grad f(x_0)\right),
\end{align*}
where $\eta_0 = \rho - 1$. Choose $\lambda_{ij}$ the same as in \cite[Example 2]{altschuler2024accelerationII}, \ie
\begin{align}
    \begin{pmatrix}
\lambda_{00} & \lambda_{01} & \lambda_{0*}\\
\lambda_{10} & \lambda_{11} & \lambda_{1*}\\
\lambda_{*0} & \lambda_{*1} & \lambda_{**}
\end{pmatrix} = \begin{pmatrix}
0 & \rho & 0\\
1 & 0 & \rho-1\\
\rho-1 & \frac{1}{2r_1} & 0
\end{pmatrix}.
\end{align}
Then, the equality \eqref{eq:goal_formula_riemconvex_smooth} holds.
\end{lem}

\begin{proof}[Proof of Lemma \ref{thm:2_step_riem_grad_convex_case}]\label{pf:2_step_riem_grad_convex_case}

Observe the following calculation: 
    \begin{align*}
        &\sum_{i,j}\lambda_{ij}Q_{ij;b} = \rho Q_{01;b} + Q_{10;b} + (\rho - 1) Q_{1*;b} + (\rho - 1) Q_{*0;b} + \frac{1}{2r_1} Q_{*1;b}\\
        &= \frac{f(x_*) - f(x_1)}{r_1} 
        - 2\rho \innerprod{\vt_{x_1}^{b}\grad f(x_1)}{\log_{b}x_0 - \log_{b} x_1}_{b} 
        - \rho \norm{\vt_{x_{1}}^{b}\grad f(x_1)}_b^2\\
        &\quad - \rho\norm{\vt_{x_{0}}^{b}\grad f(x_0)}_b^2 
        +2\rho\innerprod{\vt_{x_1}^{b}\grad f(x_1)}{\vt_{x_0}^{b}\grad f(x_0)}_b \\
        &\quad -2\rho\innerprod{\vt_{x_0}^{b}\grad f(x_0)}{\log_{b}x_1 - \log_{b}x_0}_b -\norm{\vt_{x_{1}}^{b}\grad f(x_1)}_b^2 
        - \norm{\vt_{x_{0}}^{b}\grad f(x_0)}_b^2\\
        &\quad + 2\innerprod{\vt_{x_{1}}^{b}\grad f(x_1)}{\vt_{x_0}^{b}\grad f(x_0)}_b - (\rho-1)\norm{\vt_{x_{1}}^{b}\grad f(x_1)}_b^2\\
        &\quad - 2(\rho - 1)\innerprod{\vt_{x_0}^{b}\grad f(x_0)}{\log_{b} x_* - \log_{b}x_0}_b - (\rho-1)\norm{\vt_{x_{0}}^{b}\grad f(x_0)}_b^2\\
        &\quad -\frac{1}{r_k}\innerprod{\vt_{x_1}^{b}\grad f(x_1)}{\log_{b}x_* - \log_{b}x_1}_b - \frac{1}{2r_1}\norm{\vt_{x_{1}}^{b}\grad f(x_1)}_b^2 \\
        &\overset{\text{(i)}}{=} \frac{f(x_*) - f(x_1)}{r_1}+ (2+2\rho - 2\eta_0 \rho) \innerprod{\vt_{x_1}^{b}\grad f(x_1)}{\vt_{x_0}^{b}\grad f(x_0)}_b \\
        &\quad- \left(\frac{1}{2r_1} + 2\rho\right)\norm{\vt_{x_{1}}^{b}\grad f(x_1)}_b^2 -(2\rho - 2\eta_0)\norm{\vt_{x_{0}}^{b}\grad f(x_0)}_b^2\\
        &\quad -2(\rho-1)\innerprod{\vt_{x_0}^{b}\grad f(x_0)}{\log_{b}x_* - \log_{b}x_0}_b  - \frac{1}{r_1}\innerprod{\vt_{x_1}^{b}\grad f(x_1)}{\log_{b}x_* - \log_{b}x_1}_b\\
        &\overset{\text{(ii)}}{=} \frac{f(x_*) - f(x_1)}{r_1} - \eta_0^2\norm{\vt_{x_{0}}^{b}\grad f(x_0)}_{b}^2 -\frac{1}{4r_1^2}\norm{\vt_{x_{1}}^{b}\grad f(x_1)}_{b}^2 \\
        &\quad - 2\eta_0\innerprod{\vt_{x_{0}}^{b}\grad f(x_0)}{\log_{b}x_* - \log_{b}x_0}_{b} - \frac{1}{r_1}\innerprod{\vt_{x_{1}}^{b} \grad f(x_1)}{\log_{b}x_* - \log_b x_1}_{b} \\
        &\overset{\text{(iii)}}{=} RHS.
    \end{align*}
    Here, for (i) we used again $\log_{b}x_{i+1} - \log_{b}x_{i} = -(\log_{b}x_i - \log_{b}x_{i+1}) = -\eta_i\Gamma_{x_i}^{b}\grad f(x_i)$, and for (ii) we used the explicit quantity of $\eta_0, \rho$, and $r_1$. (iii) holds from Lemma~\ref{lem:expanding_square_term}.
\end{proof}

\paragraph{Induction step}
Lemma~\ref{thm:2_step_riem_grad_convex_case} validates that \eqref{eq:goal_formula_riemconvex_smooth} holds for the base case $n = 1$. In this section, given we have the inequality \eqref{eq:goal_formula_riemconvex_smooth} for $n = 2^k - 1$ number of iterates, we show by merging two silver stepsizes, one can get \eqref{eq:goal_formula_riemconvex_smooth} for $2n + 1 = 2^{k+1} - 1$ number of iterates. 

\begin{lem}\label{thm:induction_step_riem_silver_stepsize}
    Fix $n = 2^k - 1$. Take $\set{x_{i}}_{i=0,\dots, n} \subset N$ a sequence induced from the silver stepsize VTRGD. Suppose there exist $\lambda_{ij}^{(k)} \geq 0$ such that \eqref{eq:goal_formula_riemconvex_smooth} holds. 
    Write 
    \[
    \sigma_{ij} =         \lambda_{ij}^{(k)} \mathbbm{1}_{\set{i, j = 0, \dots, n, *}} + (1+2\rho)\lambda_{i-n-1, j-n-1}^{(k)} \mathbbm{1}_{\set{i, j = n + 1, \dots, 2n + 1, *}}
    \]
    where $* - n - 1$ is understood to mean $*$. Define
    \begin{align*}
    \lambda_{ij}^{(k+1)} &:=\sigma_{ij} 
    + \rho\eta_j\mathbbm{1}_{\set{i = n, 2n+1, j = n+1,\dots, 2n}} 
    - 2\rho \eta_j \mathbbm{1}_{\set{i = *, j = n+1, \dots, 2n}}\\
    &\quad +
      \left(1 + \rho^{k-1} - \frac{1}{2r_k}\right) \mathbbm{1}_{\set{i = *, j = n}} 
      +
      \left(\frac{1}{2r_{k+1}} - \frac{1+2\rho}{2r_k}\right) \mathbbm{1}_{\set{i=*, j = 2n+1}}\\
      &\quad+ \rho \mathbbm{1}_{\set{i = n, j = 2n+1}} 
      + \rho^k \mathbbm{1}_{\set{i = 2n+1, j = n}}\\
      &\quad +(1-\rho^k)\mathbbm{1}_{\set{i = n, j = *}} + (2\rho - \sqrt{2}\rho^{k+1})\mathbbm{1}_{\set{i = 2n+1, j = *}}.
\end{align*}

Then, $\lambda_{ij}^{(k+1)}$ satisfies
\begin{align*}
    \sum_{i,j = 0,\dots, 2n+1, *} \lambda_{ij}^{(k+1)}Q_{ij;b} &= \frac{f(x_*) - f(x_{2n+1})}{r_{k+1}} 
    + \norm{\log_b x_* - \log_b x_0}_b^2\\
    &\quad - \norm{\log_b x_* - \log_b x_{2n+1} + \frac{1}{2r_{k+1}}\vt_{x_{2n+1}}^b \grad f(x_{2n+1})}_b^2.
\end{align*}
In particular, if $\lambda_{ij}^{(1)}$ is chosen as in Lemma~\ref{thm:2_step_riem_grad_convex_case}, then $\lambda_{ij}^{(k)} \geq 0$ for all $k \in \bbN$ and $i,j = 0, \dots, 2^{k} - 1, *$.
\end{lem}

\begin{proof}[Proof of Lemma \ref{thm:induction_step_riem_silver_stepsize}]

    For the simplicity of the notation, we write $g_i := \grad f(x_i)$. 

    
    From the construction of $\sigma_{ij}$, we have 
    \[
    \sum_{i,j = 0, \dots, 2n+1, *} \sigma_{ij} Q_{ij;b} = \sum_{i,j = 0, \dots, n, *}\lambda_{ij}^{(k)} Q_{ij;b} + (1 + 2\rho)\sum_{i,j = n+1, \dots, 2n+1, *} \lambda_{i-n-1, j-n-1}^{(k)} Q_{ij;b}.
    \] 
    Since we assumed \eqref{eq:goal_formula_riemconvex_smooth} in the induction, we have
    \begin{align*}
        \sum_{i,j = 0,\dots, 2n+1,*} \sigma_{ij}Q_{ij;b} &= \frac{1}{r_k}(f(x_*) - f(x_n)) + \norm{\log_b x_* - \log_b x_0}_b^2\\
        &\quad- \norm{\log_b x_* - \log_b x_n + \frac{1}{2r_k}\vt_{x_{n}}^b \grad f(x_n)}_b^2 \\
        &\quad +\frac{1+2\rho}{r_k}(f(x_*) - f(x_{2n+1})) - (1+2\rho)\norm{\log_b x_* - \log_b x_{n+1}}^2\\
        &\quad - (1+2\rho)\norm{\log_b x_* - \log_b x_{2n+1} + \frac{1}{2r_k}\vt_{x_{2n+1}}^b \grad f(x_{2n+1})}_b^2.
    \end{align*}
    We subtract $\sum_{ij}\sigma_{ij}Q_{ij;b}$ from the RHS. Using Lemma~\ref{lem:expanding_square_term}, one gets
    \begin{align*}
   &RHS - \sum_{ij}\sigma_{ij} Q_{ij;b}\\
    &= \left(\frac{1}{r_{k+1}} - \frac{2 + 2\rho}{r_{k}}\right) f(x_*) + \frac{1}{r_k}f(x_n) + \left(\frac{1+2\rho}{r_k} - \frac{1}{r_{k+1}}\right)f(x_{2n+1})\\
    &\quad +2\rho \sum_{i=n+1}^{2n} \eta_i^2 \norm{\vt_{x_i}^{b}\grad f(x_i)}_{b}^{2} + 4\rho \sum_{i=n+1}^{2n}\eta_i\innerprod{\log_{b}x_* - \log_{b}x_i}{\vt_{x_i}^{b}\grad f(x_i)}_{b}\\
    &\quad -\left(\eta_n^2 - \frac{1}{4r_k^2}\right)\norm{\vt_{x_n}^{b}\grad f(x_n)}_{b}^2 - \left(2\eta_n - \frac{1}{r_k} \right) \innerprod{\log_{b}x_* - \log_{b}x_n}{\vt_{x_n}^{b}\grad f(x_n)}_{b}\\
    &\quad - \left(\frac{1}{4r_{k+1}^2} - \frac{1+2\rho}{4r_k^2}\right)\norm{\vt_{x_{2n+1}}^{b}\grad f(x_{2n+1})}_{b}^2\\
    &\quad - \left(\frac{1}{r_{k+1}} - \frac{1+ 2\rho}{r_k}\right) \innerprod{\log_{b}x_* - \log_{b}x_{2n+1}}{\vt_{x_{2n+1}}^{b}\grad f(x_{2n+1})}_{b}.
\end{align*}

We now subtract the rest of terms in LHS. After careful calculations, one gets
\begin{align*}
    RHS - \sum_{i,j}\lambda_{ij}^{(k+1)}Q_{ij;b} 
    &= 2\rho^k(1+\rho^{k-1})\norm{\vt_{x_n}^{b}\grad f(x_n)}_b^2
    + 2\rho\sum_{i=n+1}^{2n} \eta_i^2 \norm{\vt_{x_i}^{b}\grad f(x_i)}_b^2\\
    &\quad +2\rho\sum_{i=n+1}^{2n}\eta_i\innerprod{\vt_{x_i}^{b}\grad f(x_i)}{\log_{b}x_n - \log_{b}x_i}_b \\
    &\quad + 2\rho\sum_{i=n+1}^{2n}\eta_i \innerprod{\vt_{x_i}^{b}\grad f(x_i)}{\log_{b}x_{2n+1} - \log_{b}x_i}_b\\
    &\quad - 2\rho\sum_{i=n+1}^{2n}\eta_i\innerprod{\vt_{x_i}^{b}\grad f(x_i)}{\vt_{x_n}^{b}\grad f(x_n)}_b\\
    &\quad - 2\rho\sum_{i=n+1}^{2n}\eta_i\innerprod{\vt_{x_i}^{b}\grad f(x_i)}{\vt_{x_{2n+1}}^{b}\grad f(x_{2n+1})}_b\\
    &\quad +2\rho\innerprod{\vt_{x_{2n+1}}^{b}\grad f(x_{2n+1})}{\log_{b}x_n - \log_{b}x_{2n+1}}_b \\
    &\quad +2\rho^k\innerprod{\vt_{x_n}^{b}\grad f(x_{n})}{\log_{b}x_{2n+1} - \log_{b}x_n}_b\\
    &\quad - 2\rho(\rho^{k-1} + 1)\innerprod{\vt_{x_n}^{b}\grad f(x_n)}{\vt_{x_{2n+1}}^{b}\grad f(x_{2n+1})}_b =: A.
\end{align*}

We show $A = 0$, which implies the desired equality.

Note $\vt_{x_i}^{b}\grad f(x_i) = -\frac{1}{\eta_i}\left(\log_{b}x_{i+1} - \log_{b}x_i\right)$. Plug-in this quantity to $A$. Then, the second, third, and fourth terms can be rewritten as
\begin{align}\label{eq:telescoping_sum}
    \begin{split}
        &2\rho\sum_{i=n+1}^{2n} \eta_i^2 \norm{\vt_{x_i}^{b}\grad f(x_i)}_b^2
    +2\rho\sum_{i=n+1}^{2n}\eta_i\innerprod{\vt_{x_i}^{b}\grad f(x_i)}{\log_{b}x_n - \log_{b}x_i}_b\\ 
    &\qquad + 2\rho\sum_{i=n+1}^{2n}\eta_i \innerprod{\vt_{x_i}^{b}\grad f(x_i)}{\log_{b}x_{2n+1} - \log_{b}x_i}_b\\
    &= \rho\sum_{i=n+1}^{2n}\bigg(2\norm{\log_{b}x_{i+1} - \log_{b}x_i}_b^2 
    - 2\innerprod{\log_{b}x_{i+1} - \log_{b}x_{i}}{\log_{b}x_n - \log_{b}x_i}_b\\
    &\qquad \qquad \qquad -2\innerprod{\log_{b}x_{i+1} - \log_{b}x_i}{\log_{b}x_{2n+1} - \log_{b}x_{i}}_b\bigg)\\
    &= \rho \sum_{i=n+1}^{2n}\bigg(\norm{\log_{b}x_n - \log_{b}x_{i+1}}_b^2 - \norm{\log_{b}x_{n} - \log_{b}x_i}_b^2\\
    &\qquad \qquad \qquad + \norm{\log_{b}x_{2n+1} - \log_{b}x_{i+1}}_b^2 - \norm{\log_{b}x_{2n+1} - \log_{b}x_i}_b^2\bigg)\\
    &\overset{\text{(i)}}{=} \rho \left(\norm{\log_{b}x_{n} - \log_{b}x_{2n+1}}_b^2 - \norm{\log_{b}x_{n} - \log_{b}x_{n+1}}_b^2 - \norm{\log_{b}x_{2n+1} - \log_{b}x_{n+1}}_b^2\right)\\
    &\overset{\text{(ii)}}{=} -2\rho\innerprod{\log_{b}x_n - \log_{b}x_{n+1}}{\log_{b}x_{2n+1} - \log_{b}x_{n+1}}_b\\
    &\overset{\text{(iii)}}{=} 2\rho\eta_n\sum_{i=n+1}^{2n}\eta_i \innerprod{\vt_{x_n}^{b}\grad f(x_n)}{\vt_{x_i}^{b}\grad f(x_i)}_b.
    \end{split}
\end{align}
For (i) we used the telescoping sum, for (ii) we used the fact 
\[
\norm{a - b}^2 - \norm{a - c}^2 - \norm{b - c}^2 = -2\innerprod{a-c}{b - c},
\]
and for (iii) we used the fact $\log_{b}x_{2n+1} - \log_{b}x_{n+1} = -\sum_{i=n+1}^{2n}\eta_i \vt_{x_i}^{b}\grad f(x_i)$.

Likewise, using the same fact, 
\begin{align}\label{eq:expanding_log}
    \begin{split}
    &2\rho\innerprod{\vt_{x_{2n+1}}^{b}\grad f(x_{2n+1})}{\log_{b}x_n - \log_{b}x_{2n+1}}_b = 2\rho\sum_{i=n}^{2n}\eta_i \innerprod{\vt_{x_{2n+1}}^{b}\grad f(x_{2n+1})}{\vt_{x_{i}}^{b}\grad f(x_{i})}_b,\\
    &2\rho^k\innerprod{\vt_{x_{n}}^{b}\grad f(x_{n})}{\log_{b}x_{2n+1} - \log_{b}x_{n}}_b = -2\rho^k\sum_{i=n}^{2n} \eta_i \innerprod{\vt_{x_n}^{b}\grad f(x_n)}{\vt_{x_i}^{b}\grad f(x_i)}_b.
    \end{split}
\end{align}
Pluggin-in Equation~\eqref{eq:telescoping_sum} and \eqref{eq:expanding_log} into $A$ and using $\eta_n = 1+\rho^{k-1}$ yields
\begin{align*}
    A &= 2\rho^k(1+\rho^{k-1})\norm{\vt_{x_n}^{b}\grad f(x_n)}_b^2 + 2\rho\eta_n\innerprod{\vt_{x_{2n+1}}^{b}\grad f(x_{2n+1})}{\vt_{x_{n}}^{b}\grad f(x_{n})}_b \\
    &\quad -2\rho^k\eta_n\innerprod{\vt_{x_n}^{b}\grad f(x_n)}{\vt_{x_n}^{b}\grad f(x_n)}_b\\
    &\quad - 2\rho(1+\rho^{k-1})\innerprod{\vt_{x_n}^{b}\grad f(x_n)}{\vt_{x_{2n+1}}^{b}\grad f(x_{2n+1})}_b\\
    &=0
\end{align*}
by using the facts that $\eta_n = 1+\rho^{k-1}$. This completes the induction argument.

For non-negativeness of the coefficients, if the initialization is the same, then by \cite[Section 3.2]{altschuler2024accelerationII} the non-negativeness is guaranteed as we select the same coefficients.
\end{proof}

Equipped with Lemma~\ref{thm:2_step_riem_grad_convex_case} ~\ref{thm:induction_step_riem_silver_stepsize}, Lemma~\ref{lem:goal_formula_riemconvex_smooth} is direct.

\begin{proof}[Proof of Lemma~\ref{lem:goal_formula_riemconvex_smooth}]
    Start with the coefficients in Lemma~\ref{thm:2_step_riem_grad_convex_case}. The coefficients are clearly non-negative, and \eqref{eq:goal_formula_riemconvex_smooth} holds for $n = k = 1$. Then, applying Lemma~\ref{thm:induction_step_riem_silver_stepsize} gives \eqref{eq:goal_formula_riemconvex_smooth} for all $k \in \bbN$ and $n = 2^k - 1$.
\end{proof}

\subsubsection{Moving to strongly convex smooth functional: Restarting method}\label{appendix:restarting}

We now turn our attention to the geodesically strongly convex case. Although an alternative silver stepsize scheme has been proposed for strongly convex, smooth problems in the Euclidean setting \cite{altschuler2024accelerationI}, the co-coercivity condition it relies on does not carry over to geodesically strongly convex, smooth problems on Riemannian manifolds. In contrast, for convex, smooth functions the co-coercivity condition admits a natural Riemannian interpretation via generalized geodesic convexity and geodesic smoothness (see Proposition \ref{prop:generalized_geodesic_and_f_ineq_mainbody_one}. 

Nevertheless, as noted in the main text, one can still employ the silver stepsize in the convex, smooth setting by combining it with the restarting technique of \cite{odonoghue2012adaptiverestartacceleratedgradient}. Theorem~\ref{thm:riem_silver_strong_convex_smooth} shows that applying the restarting method \cite{odonoghue2012adaptiverestartacceleratedgradient} to our silver stepsize RGD yields an algorithm that also applies to geodesically strongly convex problems.

\begin{proof}[Proof of Theorem \ref{thm:riem_silver_strong_convex_smooth}]

Since $f$ is geodesically $\alpha$-strongly convex and $x_*$ is a minimizer,
\[
f(x_m) - f(x_*) \geq \frac{\alpha}{2}d^2(x_m, x_*)
\]
from the geodesic strong convexity and stationarity condition.

Therefore, for $m = 2^{k} - 1$, one gets
\[
d^2(x_m, x_*) \leq \frac{2}{\alpha}\left(f(x_m) - f(x_*)\right) \leq 2\kappa r_k \norm{\log_{b} x_0 - \log_{b}x_*}^2.
\]

We first consider the case when $r = 0$, \ie we exactly iterate $m = 2^k - 1$ silver stepsize gradient descent $\ell$ times, by restarting the algorithm from the very last update of the previous runs. The total number of iterations becomes $n = m\ell = (2^k - 1)\ell$. Then, one gets the following bound for $n$ number of iterations:
\[
d^2(x_n, x_*) \leq \left(2\kappa r_k\right)^{\ell} \norm{\log_{b} x_0 - \log_{b}x_*}^2.
\]
The term $(2\kappa r_k)^\ell$ is the rate we obtain for this algorithm. Now, one can optimize the choice of $k, \ell$ to get the tightest convergence rate, by solving
\[
\min_{\ell, k} \left(2 \kappa r_k\right)^\ell \quad \text{given} \quad (2^{k}-1)\ell = n.
\]

Specifically, we plug-in $k^* = \ceil{\log_{\rho} \kappa} + 1$. Observe $\rho^{k^*} + 1 \geq 1 + \rho^{\log_{\rho} \kappa} = 1 + \rho \kappa \geq \rho \kappa$. Then,
\[
2\kappa r_{k^*} = \frac{2\kappa}{1 + \sqrt{4 \rho^{2k^*} - 3}} \leq \frac{2\kappa}{\rho^{k^*} + 1} \leq \frac{2}{\rho} < 1
\]
Now, since $\ell = \frac{n}{2^{k^*}-1}$,
\[
(2\kappa r_{k^*})^{\ell} = \exp\left(\ell \log\left(2\kappa r_{k^*}\right)\right) \leq  \exp\left(\left(\log\frac{2}{\rho}\right)\frac{n}{2^{k^*} - 1}\right) \leq \exp\left(-\left(\log\frac{\rho}{2}\right) \frac{n}{\kappa^{\log_{\rho} 2}}\right)
\]
which is the claimed rate.

For the $\epsilon$-approximate error, $d^2(x_n,x_*) \leq \epsilon$ holds whenever
\[
\exp\left(-\left(\log\frac{\rho}{2}\right) \frac{n}{\kappa^{\log_{\rho} 2}}\right) \norm{\log_{b} x_0 - \log_{b}x_*}^2 \leq \epsilon.
\]
This is equivalent to 
\[
n \geq \frac{\kappa^{\log_{\rho} 2}}{\log(\rho/2)} \log\frac{ \norm{\log_{b} x_0 - \log_{b}x_*}^2}{\epsilon} = \Theta(\kappa^{\log_{\rho} 2} \log(1/\epsilon)).
\]
\end{proof}

\subsection{Deferred proofs for Section \ref{section:application}}\label{appendix:deferred_proof_section_application}

This appendix contains the proofs for the results in Section \ref{section:application}.

\begin{proof}[Proof of Corollary~\ref{thm:silver_stepsize_wasserstein_gradient}]
    First, choose the base $b \in \calP_{2,ac}(\bbR^d)$ from the points that makes $\calF$ being generalized geodesically $L$-smooth. Then, if Proposition~\ref{prop:generalized_geodesic_and_f_ineq_mainbody_one} holds, then the proof goes exactly same as in Theorem \ref{thm:silver_stepsize_riemannian_general} and \ref{thm:riem_silver_strong_convex_smooth}, once one substitutes the following quantities in the proof of Theorem \ref{thm:silver_stepsize_riemannian_general} and \ref{thm:riem_silver_strong_convex_smooth} accordingly: 
\begin{itemize}
    \item Set $M = N = \calP_{2,ac}(\bbR^d)$.
    \item Change the Riemannian metric by $\innerprod{\cdot}{\cdot}_{\mu} = \innerprod{\cdot}{\cdot}_{\calL^2(\mu)} = \bbE_{x \sim \mu}\left[\innerprod{\cdot(x)}{\cdot(x)}\right]$.
    \item Set $\vt_{\mu}^{b} = T_{b, \mu}$. 
    \item Use $\exp_{b}(v) = (id + v)_{\# b}$.
    \item Take $\log_{b}\nu = T_{b, \nu} - id$.
    \item Set $\grad f(x)$ to $\wgrad \calF(\mu)$, introduced in Definition \ref{defn:wasser_gradient}.
\end{itemize}
    One thing to clarify is that 2-Wasserstein space is \emph{not geodesically complete} \citep{panaretos2020wasserstein}, so Assumptions~\ref{assumption:common} and \ref{assumption:for_silver_stepsize} are not direct. However, we claim that the regularity conditions we imposed ensure that all the proof ingredients in Theorem~\ref{thm:silver_stepsize_riemannian_general}, \ref{thm:riem_silver_strong_convex_smooth}, and Proposition~\ref{prop:generalized_geodesic_and_f_ineq_mainbody_one} to be valid.
    
    To check the claim, first Proposition~\ref{prop:generalized_geodesic_and_f_ineq_mainbody_one} will remain valid if the condition on $z$ in Proposition~\ref{prop:generalized_geodesic_and_f_ineq_mainbody_one} is satisfied. Under this specific geometry, the condition on $z$ can be written as for all $\mu, \nu \in \calP_{2,ac}(\bbR^d)$, 
    \begin{align*}
        \sigma
        &:=\left( -\frac{1}{L}(\wgrad \calF(\nu) \circ T_{b, \nu} - \wgrad \calF(\mu) \circ T_{b, \mu}) + T_{b, \nu}\right)_{\#b} \\
        &= \left(id -\frac{1}{L}(\wgrad \calF(\nu) - \wgrad \calF(\mu)\circ T_{\nu, \mu})\right)_{\#\nu} \in \calP_{2,ac}(\bbR^d).
    \end{align*}
    The above equality holds since $T_{\nu,\mu} \circ T_{b,\nu} = T_{b,\mu}$ when $\mu, \nu$ are the iterates from \eqref{eq:wasserstein_silver_gradient}. Precisely, for $j > i$,
    \[
    T_{b, \mu_j} = \underbrace{(id - \wgrad \calF(\mu_{j-1})) \circ (id - \wgrad \calF(\mu_{j-2})) \circ \dots \circ (id - \wgrad \calF(\mu_{i}))}_{= T_{\mu_i,\mu_j}} \circ T_{b,\mu_i}.
    \]
    Since $\wgrad\calF(\mu), \wgrad\calF(\mu) \circ T_{\nu, \mu} \in \calL^2(\nu)$, the second moment condition is satisfied, and the regularity condition we imposed ensures the absolute continuity. 
    
    Remaining part is whether the gradient iterates are well-defined in $\calP_{2,ac}(\bbR^d)$, which is not guaranteed anymore as the space is not geodesically complete. Again, the second moment condition is direct; since $\wgrad \calF(\mu_n) \in L^2(\mu_n)$, $(id - \eta_n \wgrad \calF(\mu_n))_{\#\mu_n}$ also has the second moment. In addition, we imposed  $d(id - \eta_n \wgrad \calF(\mu_n)_{\#\mu_n} \ll dm$, so it certifies that the gradient iterates are staying in $\calP_{2,ac}(\bbR^d)$. However, there is still one more thing to clarify: Definition~\ref{defn_wasser_explog} is well defined only when all geodesic interpolations exist, which is stronger than merely requiring invertibility at the endpoints. Such a condition guarantees the well definedness of geodesic interpolations between two points. Nevertheless, note that our proofs of Theorems~\ref{thm:silver_stepsize_riemannian_general} and \ref{thm:riem_silver_strong_convex_smooth} rely only on inequalities evaluated at the iterates themselves; no interpolating points are involved (e.g., the proofs do not invoke time integrals). Hence, we do not need the the all geodesic interpolations between $\mu_n$ and $(id - \eta_n \grad \calF(\mu_n)_{\# \mu_n}$ to exist. Under this perspective, we can conclude that as long as the gradient iterates stay in $\calP_{2,ac}(\bbR^d)$, all our proof ingredients remain valid.
    
    In sum, as long as the imposed regularity conditions hold, all our proof ingredients remain valid even without the geodesic completeness. This completes the proof.
\end{proof}

\begin{rmk}[Proof for Bures-Wasserstein space]
    The proof of Corollary~\ref{thm:silver_stepsize_wasserstein_gradient} holds the same if we replace $N$ to be $BW(\bbR^d)$, as $BW(\bbR^d)$ is a totally geodesic submanifold of $\calP_{2,ac}(\bbR^d)$. This justifies our choice of $N$ in Section~\ref{section:application_wasserstein}.
\end{rmk}

Now, we prove the results in Remark~\ref{rmk:regularity_is_not_restrictive}.

\begin{proof}[Proofs on the statements in Remark~\ref{rmk:regularity_is_not_restrictive}]\label{appendix:proof_of_rmk_regularity}

First, we show if $d\mu \ll dm$, $I - s \nabla^2 h$ being invertible implies $d(id -s\nabla h)_{\#\mu} \ll dm$ for any fixed $s > 0$, if $h \in C^{1,1}_{loc}(\bbR^d)$. Since $h \in C^{1,1}_{loc}(\bbR^d)$, the map $T(x) = x - s\nabla h(x)$ is locally Lipschitz and differentiable a.e.

Write $B_R := \overline B(0,R)$, and for any nonnegative measurable function $g$ consider $g_k(y) = \min\set{g(y), k}\mathbbm{1}_{\set{\abs{y} \leq k}}$. Then, by the change of variable formula and area formula,
\begin{align*}
\int_{B_R} g_k(y) d(T_{\#\mu})(y) &= \int_{T^{-1}(B_R)} g_k \circ T(x) d\mu(x) = \int_{T^{-1}(B_R)} g_k \circ T(x) \mu(x)dx \\
&= \int_{B_R} g_k(y) \sum_{x \in B_R \cap T^{-1}(y)} \frac{\mu(x)}{\abs{\det (I - s\nabla^2 h(x))}}dy.
\end{align*}
We used the invertibility for the division by the Jacobian. Next, using monotone convergence theorem with $R \to \infty$, one gets
\[
\int_{\bbR^d} g_k(y) d(T_{\#\mu})(y) = \int_{\bbR^d} g_k(y) \sum_{x \in T^{-1}(y)} \frac{\mu(x)}{\abs{\det (I - s\nabla^2 h(x))}}dy.
\]
Apply MCT one more time with $k \to \infty$ to get
\[
\int_{\bbR^d} g(y) d(T_{\#\mu})(y) = \int_{\bbR^d} g(y) \sum_{x \in T^{-1}(y)} \frac{\mu(x)}{\abs{\det (I - s\nabla^2 h(x))}}dy.
\]
Now, for any measurable function $g$, one can use the standard method in measure theory (spliiting $g = g^{+} - g^{-}$) to get
\[
\int_{\bbR^d} g(y) d(T_{\#\mu})(y) = \int_{\bbR^d} g(y) \sum_{x \in T^{-1}(y)} \frac{\mu(x)}{\abs{\det (I - s\nabla^2 h(x))}}dy
\]
for any measurable function $g$. This shows that $(id - s \nabla h)_{\# \mu}$ admits the density $\sum_{x \in T^{-1}(y)} \frac{\mu(x)}{\abs{\det (I - s\nabla^2 h(x))}}$.

For $(id - \frac{1}{L}(\nabla h - \nabla h \circ T_{\nu, \mu})_{\# \nu} \ll dm$, the same argument as in the above holds as long as $T_{\nu, \mu} \in C^{1,1}_{loc}(\bbR^d)$.
\end{proof}

In many applications, $\wgrad \calF(\mu) \in C^{1,1}_{loc}(\bbR^d)$ is weak condition. For example, our potential energy functional application satisfies this as long as the potential function is convex and $L$-smooth, which is typical case. In addition, the negative entropy functional $\calH(\mu) = \int \mu \log \mu$ also satisfies this condition as long as the log density is $C^{1,1}_{loc}(\bbR^d)$. The regularity condition on the optimal transport map $T_{\nu, \mu} \in C^{1,1}_{loc}(\bbR^d)$ have been studied intensively in the Monge-Ampere Equation literature, and depends on the relationship between densities of $\mu$ and $\nu$, but it holds in many practical applications. For instance, in our Gaussian application, since the transport map is linear, it satisfies the desired regularity.

Next, we present a complete proof of Proposition~\ref{prop:potential_function_satisfies_f_inequality}.

\begin{proof}[Proof of Proposition~\ref{prop:potential_function_satisfies_f_inequality}]\label{pf:potential_function_satisfies_f_inequality}
    Since the argument is identical for both the 2-Wasserstein and Bures–Wasserstein geometries, we only present the proof in the 2-Wasserstein case. 

    Fix an aribtrary base $b \in \calP_{2,ac}(\bbR^d)$. Let $T_{b,\mu}$ and $T_{b,\nu}$ be any transport maps. From the condition that $V$ is convex and $L$-smooth on $\bbR^d$, we have for any $z \sim b$,
    \[
    V(T_{b,\nu}(z)) - V(T_{b,\mu}(z)) - \innerprod{\nabla V(T_{b,\mu}(z))}{T_{b, \nu}(z) - T_{b, \mu}(z)} - \frac{1}{2L}\norm{\nabla V(T_{b,\nu}(z)) - \nabla V(T_{b,\mu}(z))}^2 \geq 0
    \]
    which is the standard inequality for convex $L$-smooth function on $\bbR^d$.
    
    Take an expectation over $z \sim b$ on the above inequality. The result follows from the fact $\wgrad \calV(\mu)(\cdot) = \nabla V(\cdot)$, which is from \cite[Remark 7.13]{santambrogio2014introduction} and Definition~\ref{defn:wasser_gradient}. Since the result holds regardless of the choice of base $b$, it holds with any base $b$.

    To show \eqref{eq:conv_wasser_potential}, we notice the above result holds for any $b$. In addition, notice the gradient update $\mu_n$ itself does not depend on $b$. Therefore, one can rewrite the result of Corollary~\ref{thm:silver_stepsize_wasserstein_gradient} as follows:
    \[
    \calV(\mu_n) - \calV(\mu_*) \leq r_kL\inf_{b \in \calP_{2,ac}(\bbR^d)}\norm{T_{b,\mu_0} - T_{b,\mu_*}}_b^2 = r_k L\inf_{b \in \calP_{2,ac}(\bbR^d)}\norm{T_{b,\mu_*} \circ T_{\mu_0,b} - id}_{\mu_0}^2.
    \]
    By the optimality of the transport plan, one has $\inf_{b \in \calP_{2,ac}(\bbR^d)}\norm{T_{b,\mu_*} \circ T_{\mu_0,b} - id}_{\mu_0}^2 = W_2^2(\mu_0,\mu_*)$, which completes the proof.
    

\end{proof}

\begin{rmk}
    Note for the above proof we did not use the \emph{optimal} transport map, and considered arbitrary transport map. Hence, our algorithm is readily applicable to this setting.
\end{rmk}

\section{Generalized geodesic convexity and smoothness}\label{appendix:functionals_generalized_geodesic_convexity}

The notion of generalized geodesic convexity was originally introduced in optimal transport and has found various usages in Wasserstein geometry, including the theoretical analysis of the proximal operator in the 2-Wasserstein space \cite[Lemma 9.2.7]{ambrosio2008gradientflow}, \cite{salim2020wasserstein, diao2023forwardbackward}, and its connection to $\Gamma$-convergence \cite[Lemma 9.2.9]{ambrosio2008gradientflow}. To the best of our knowledge, this notion has not yet been explored in the Riemannian geometry literature. We therefore expect that introducing it in this context could provide new tools for analyzing proximal operators and $\Gamma$-convergence on Riemannian manifolds, as it has in the 2-Wasserstein setting-areas that, to date, remain underdeveloped.

In this appendix, we provide some examples of generalized geodesically convex functionals for readers who are not familiar with the concept. 

First, recall the notion of generalized geodesic convexity, which is $\vt$-geodesic convexity with $\vt = \Gamma$. Generalized geodesic smoothness can be understood in analogous manner.

\begin{defn}[Generalized geodesic convexity]
    A differentiable function $f: N \to \bbR$ is called generalized geodesically $\alpha$-strongly convex with base $b \in M$ if for all $x, y \in N$ 
        \[
        f(y) \geq f(x) + \innerprod{\Gamma_x^b \grad f(x)}{\log_b y - \log_b x}_{b} + \frac{\alpha}{2}\norm{\log_b y - \log_b x}_b^2.
    \]
    If $\alpha = 0$, we say $f$ is generalized geodesically convex with base $b$. If $f$ is generalized geodesically $\alpha$-strongly convex for all $b \in M$, then $f$ is called generalized geodesically $\alpha$-strongly convex.
\end{defn}

We start with the trivial example: Euclidean space.

\begin{eg}
    A differentiable, $\alpha$-strongly convex function $f: \bbR^d \to \bbR$ is generalized geodesically $\alpha$-strongly convex.
\end{eg}
\begin{proof}
    In Euclidean space, $\exp_x(v) = x+ v$ and $\log_x y = y - x$. Since $f$ is differentiable and $\alpha$-strongly convex, for all $x, y, b \in \bbR^d$
    \begin{align*}
        f(y) &\geq f(x) + \innerprod{\nabla f(x)}{y - x} + \frac{\alpha}{2}\norm{y - x}^2\\
        &= f(x) + \innerprod{\nabla f(x)}{(y - b) - (x - b)} + \frac{\alpha}{2}\norm{(y - b) - (x - b)}^2.
    \end{align*}
\end{proof}

Now, we move to nontrivial examples: non-Euclidean manifolds. As mentioned in the main body, this concept has already been widely discussed in the Wasserstein space. Therefore, there are some known examples in 2-Wasserstein space. We first introduce some generalized geodesically convex functionals in Wasserstein space: potential energy functional and internal energy functional.

\begin{eg}[Potential energy]\label{eg:potential_functional}
    Consider a function $V: \bbR^d \to \bbR$. A functional $\calV(\mu):= \bbE_{X \sim \mu}[V(X)]$ is called a potential functional. If $V$ is $\alpha$-strongly convex ($L$-smooth) in $\bbR^d$, then $\calV$ geodesically $\alpha$-strongly convex (\resp $L$-smooth).
\end{eg}

This is duplicate of Proposition~\ref{prop:potential_function_satisfies_f_inequality}.

\begin{eg}[Internal energy]\label{eg:internal_energy}
    Let $F:[0,\infty) \to (-\infty, \infty]$ be a proper, lower semi-continuous convex function such that
    \[
    F(0) = 0, \quad \liminf_{s \downarrow 0} \frac{F(s)}{s^{\alpha}} > -\infty \text{ for some } \alpha > \frac{d}{d+2}.
    \]
    Consider a functional $\calH_F: \calP_{2,ac}(\bbR^d) \to \bbR$ defined by
    \[
    \calH_F(\mu) := \int_{\bbR^d} F(\mu(x)) dx.
    \]
    If the map $s \mapsto s^d F(s^{-d})$ is convex and non-increasing in $(0,\infty)$, then the functional $\calH_F$ is generalized geodesically convex.
\end{eg}

We refer to \cite[Proposition 9.3.9]{ambrosio2008gradientflow} for the proof.

\begin{rmk}
    Some widely used choice of $F$ satisfying the conditions are as follows:
    \begin{enumerate}
        \item $F(s) = s \log s$. This choice leads to $\calH_F$ being the differential entropy functional.
        \item For any $q > 1$, $F(s) = s^q$.
        \item For $m \geq 1 - 1/d$, $F(s) = \frac{1}{m-1}s^m$.
    \end{enumerate}
\end{rmk}

Now, we present examples on Riemannian manifolds. We begin by providing sufficient conditions for generalized geodesic convexity, which turns out to be useful in verifying the generalized geodesic convexity for a given functional.

\begin{lem}[Criteria for generalized geodesic convexity]\label{lem:generalized_geodesic_criteria}
    Fix $b \in N$. For any $x, y \in N$, let $\gamma(t)$ be any curve such that $\gamma(0) = x$, $\gamma(1) = y$, and $\dot{\gamma}(0) = \Gamma_b^x (\log_b y - \log_b x)$.  If a differentiable function $f: N \to \bbR$ satisfies either one of the following conditions, then $f$ is generalized geodesically convex with base $b \in N$.
    \begin{enumerate}
        \item Zeroth-order criterion: $(1-t) f(x) + tf(y) \geq (f \circ \gamma)(t)$ for all $t \in [0,1]$.
        \item Second-order criterion: $\frac{d^2}{dt^2}(f \circ \gamma)(t) \geq 0$ for all $t \in (0,1)$.
    \end{enumerate}
\end{lem}

\begin{proof}
\textbf{1. Zeroth-order criterion:} Let $\phi(t):= (1-t)f(x) + tf(y)  - (f \circ \gamma)(t)$. Then, $\phi(t) \geq 0$ and $\phi(0)= 0$ is the global minimizer. Since $f$ is differentiable,
\begin{align*}
    0 &\leq \phi'(0^{+}) = \lim_{t \to 0+}\frac{\phi(t)}{t} = f(y) - f(x) - \frac{d}{dt}\bigg\vert_{t=0}(f \circ \gamma)(t)\\
    &= f(y) - f(x) -\innerprod{\grad f(x)}{\Gamma_{b}^{x}(\log_y b - \log_b x)}_x\\ 
    &= f(y) - f(x)-\innerprod{\Gamma_x^b \grad f(x)}{\log_b y - \log_b x}_b.
\end{align*}

\textbf{2. Second-order criterion:} By Taylor's theorem, 
    \begin{align*}
        f(y) &= f(x) + \frac{d}{dt} \bigg\vert_{t=0} (f \circ \gamma)(t) + \int_{0}^{1}(1-t) \frac{d^2}{dt^2}(f \circ \gamma)(t) dt\\
        &\geq f(x) + \innerprod{\grad f(x)}{\Gamma_b^x(\log_b y - \log_b x)} = f(x) + \innerprod{\Gamma_x^b \grad f(x)}{\log_b y - \log_b x}.
    \end{align*}
\end{proof}

\begin{rmk}[Existence of $\gamma$]\label{rmk:existence_of_generalized_geodesic}
    It is natural to ask whether such curve $\gamma(t)$ exists. In fact, as long as the exponential map is defined for sufficiently large neighborhood of $x$, there always exists a curve satisfying the conditions. For example, in a complete manifold, such curve always exists. Let $v(t):=  t\Gamma_{b}^{x}(\log_b y - \log_b x) + t^2(\log_x y - \Gamma_{b}^{x}(\log_b y - \log_b x))$, and define $\gamma(t) = \exp_{x}\left(v(t)\right)$.  Observe $\gamma(0) = x$ and $\gamma(1) = y$. Furthermore, since the differential of the exponential map is the identity at the origin, by the chain rule
    \[
    \dot{\gamma}(0) = d \exp_x(v(0))[v'(0)] = \Gamma_b^x(\log_b y - \log_b x).
    \]
    In certain Riemannian manifolds with a particularly well-behaving exponential map, simpler curves can be used. For instance, in the 2-Wasserstein space, a more natural choice of curve is available. Fix a base $\pi \in \calP_{2,ac}(\bbR^d)$. For any $\mu, \nu \in \calP_{2,ac}(\bbR^d)$, let $\gamma(t) := \exp_{\pi}((1-t) \log_{\pi} \mu + t \log_{\pi} \nu) = ((1-t)T_{\pi, \mu} + tT_{\pi, \nu})_{\#\pi}$ be a curve. Then, $\gamma(0) = \mu, \gamma(1) = \nu$, and the velocity vector field corresponding to $\gamma(t)$ is $v_t = (T_{\pi, \nu} - T_{\pi, \mu}) \circ T_{\gamma(t), \pi}$ \cite[Appendix B.2]{diao2023forwardbackward}.
\end{rmk}

As a specific example, we consider the entropy functional on $SPD(d)$ space. This example will show how one can verify the generalized geodesic convexity using Lemma \ref{lem:generalized_geodesic_criteria}. 

\begin{eg}[Entropy of Gaussian]\label{eg:entropy_generalized_geodesic_convexity_calculation}
    Consider a functional $\calH: SPD(d) \to \bbR$ defined by $\calH(A) = -\frac{1}{2}\log \det A$. This functional is in fact the entropy functional of the multivariate Gaussian distribution $N(0,A)$ (up to an affine transformation). There are two natural Riemannian metrics in $SPD(d)$ space \cite{fillard2005extrapolation, pennec2005ARF, bhatia2006riemannian, han2021riemannian, nguyen2022gyrovector, thanwerdas2022oninvariant, kim2025robust}.
    \begin{enumerate}
        \item Affine invariant metric: $d_{AI}(A,B):= \norm{\log A^{-1/2}BA^{-1/2}}_{F}$, and $\innerprod{S}{R}_{A} = \tr(A^{-1}SA^{-1}R)$ for $S, R \in \text{Sym}(d)$. This metric induces non-positively curved geometry on SPD($d$).
        \item Bures-Wasserstein metric: $d_{BW}^2(A,B) := \tr(A) + \tr(B) - 2\tr(A^{1/2}BA^{1/2})^{1/2}$, and $\innerprod{S}{R}_{A} = \tr(S A R)$ for $S, R \in \text{Sym}(d)$. This metric induces non-negatively curved geometry on SPD($d$).
    \end{enumerate}
    Both geometries originate from the geometry of zero-mean Gaussian distributions. The metric $d_{AI}$ arises from the Fisher information metric associated with zero-mean Gaussians \cite{nielsen2023asimpleapproximation}, while the metric $d_{BW}$ corresponds to the Wasserstein geometry of zero-mean Gaussians, as described in Appendix~\ref{appendix_BW_geom}. Under both geometries, $\calH(A)$ is generalized geodesically convex.
\end{eg}

Note that $d_{BW}$ corresponds to the 2-Wasserstein distance between Gaussians, so the result for $d_{BW}$ is a special case of Example \ref{eg:internal_energy}. Nonetheless, we present the proof entirely in the language of Riemannian geometry to demonstrate that the notion of generalized geodesic convexity remains valid purely within the Riemannian framework. 

\begin{proof}[Proof of Example~\ref{eg:entropy_generalized_geodesic_convexity_calculation}]

In both cases, we apply the second-order criterion from Lemma~\ref{lem:generalized_geodesic_criteria}. The general strategy is to construct a curve that satisfies the required conditions with respect to a fixed starting point, endpoint, and base point. The specific choice of curve should reflect the underlying geometry. Once the curve is chosen, we compute the time derivative of the functional along the curve; this can be carried out entirely using matrix calculus, without explicitly invoking the Riemannian structure. 

We will use $N$ to denote the arbitrary base point, and $M_0, M_1$ to denote the starting point and the endpoint of the curve.

\textbf{1. Affine invariant metric}: We first construct a curve satisfying the desired property. For simplicity, write $X_i := N^{-1/2}M_i N^{-1/2}$. Define $c(t) := At + Bt^2$ where
\begin{align*}
    A &= \log X_1 - \log X_0,\\
    B &= \log(X_0^{-1/2} X_1 X_0^{-1/2}) - (\log X_1 - \log X_0).
\end{align*}
We now consider a curve on $\text{SPD}(d)$ defined by
\[
M(t) := N^{1/2}X_0^{1/2} \exp(c(t))X_0^{1/2}N^{1/2}.
\]
Here, the $\exp$ is usual matrix exponential, not the exponential map. We claim this is the desired curve in Lemma~\ref{lem:generalized_geodesic_criteria}.\footnote{In fact, this curve is designed as in Remark~\ref{rmk:existence_of_generalized_geodesic}.}
First, $M(0) = N^{1/2} X_0 N^{1/2} = M_0$, $M(1) = N^{1/2} X_1 N^{1/2} = M_1$. Now, we check $M'(0)$. Note
\begin{align*}
    M'(0) &= N^{1/2}X_0^{1/2}\exp(c(0)) c'(0) X_0^{1/2}N^{1/2}\\
    &= N^{1/2}X_0^{1/2} A X_0^{1/2}N^{1/2}.
\end{align*}
Now, since $M_0 = N^{1/2}X_0N^{1/2}$, $M_0 N^{-1} = N^{1/2}X_0 N^{-1/2}$. Hence, $(M_0 N^{-1})^{1/2} = N^{1/2}X_0^{1/2} N^{-1/2}$. This leads to 
\begin{align*}
    M'(0) &= N^{1/2}X_0^{1/2}N^{-1/2}[N^{1/2} A N^{1/2}]N^{-1/2}X_0^{1/2}N^{1/2}\\
    &= (M_0N^{-1})^{1/2} N^{1/2} A N^{1/2} ((M_0 N^{-1})^{1/2})^{T}.
\end{align*}
This exactly coincides to $\Gamma_{N}^{M_0}(\log_N M_1 - \log_N M_0)$ on $(\text{SPD}(d), d_{AI})$\footnote{For the formula of the parallel transport and Riemannian logarithmic map on $(SPD, d_{AI})$, see \cite{nguyen2022gyrovector}[Supplement 1.1].}.

Now, since we obtained the desired curve, we compute $\frac{d^2}{dt^2} \calH(M_t)$. First, observe
\begin{align*}
    \calH(M_t) &= -\frac{1}{2} \left( \log \det \exp (c(t)) + \log \det N + \log \det X_0 \right)\\
    &= -\frac{1}{2} \left( \log \exp \tr(c(t)) + \log \det N + \log \det X_0 \right) = -\frac{1}{2}\left(\tr(c(t)) + \log \det N + \log \det X_0 \right)
\end{align*}
where we used the well-known matrix identity $\det \exp (Y) = \exp \tr (Y)$.

Thus,
\begin{align*}
    \frac{d^2}{dt^2} \calH(M_t) &= -\frac{1}{2} \tr (c''(t)) = -\tr(B)\\
    &= -\tr(\log(X_0^{-1/2} X_1 X_0^{-1/2}) + \tr(\log X_1) - \tr(\log X_0)\\
    &= -\log \det X_0 - \log \det X_1 + \log \det X_1 - \log \det X_0 = 0
\end{align*}
from the well-known matrix identity $\tr \log (Y) = \log \det (Y)$. Hence, by the second order criterion of Lemma~\ref{lem:generalized_geodesic_criteria}, $\calH$ is generalized geodesically convex with base $N$.

Since the above result holds for arbitrary choice of $N \in \text{SPD}(d)$, we get the generalized geodesic convexity of $\calH$.\footnote{More precisely, since the second derivative is zero, functional $\calH$ is generalized geodesically linear.}

\textbf{2. Bures-Wasserstein metric}: We again start with constructing a curve satisfying the desired properties. As noted in Remark~\ref{rmk:bw_equivalent}, in this setting we must match the \emph{tangent vector corresponding to $M'(0)$} with $\Gamma_{N}^{M_0}(\log_N M_1 - \log_N M_0)$, rather than matching $M'(0)$ directly. We consider $\nu = N(0, N), \mu_0 = N(0, M_0)$, and $\mu_1 = N(0, M_1)$. From Appendix \ref{appendix_BW_geom}, the optimal transport map between 0-mean Gaussians is a linear map. Therefore, for any $\pi_0, \pi_1$, we denote $B_{L_0, L_1}$ to be the matrix corresponding to the optimal transport map between $\pi_0 = N(0,L_0), \pi_1 = N(0,L_1)$, \ie $T_{\pi_0, \pi_1}(x) = B_{L_0, L_1}x$. Now, consider a curve on $\text{SPD}(d)$ defined by
\[
M(t) := \left((1-t)I + t B_{N, M_1} B_{M_0, N}\right) M_0 \left((1-t)I + t B_{N, M_1} B_{M_0, N}\right)^{T}\footnote{While $B_{N, M_1} B_{M_0, N} - I$ may not be symmetric, the formula on the right hand side is still well-defined. Consequently, there is no harm in defining the curve via this formula.} .
\]
Then, $M(0) = M_0$ trivially and $M(1) = M_1$; for any $X \sim N(0, M_0)$, on the one hand $B_{N, M_1} B_{M_0, N}X = T_{\nu, \mu_1} \circ T_{\mu_0, \nu}(X) \sim N(0, M_1)$, and on the other hand $B_{N, M_1} B_{M_0, N}X \sim N(0, (B_{N, M_1} B_{M_0, N}) M_0 (B_{N, M_1} B_{M_0, N})^{T})$, meaning $(B_{N, M_1} B_{M_0, N}) M_0 (B_{N, M_1} B_{M_0, N})^{T} = M_1$. In addition, since $M'(0) = B_{N, M_1}B_{M_0, N} M_0 + M_0 B_{N, M_1}B_{M_0, N}$, from the identification in Remark \ref{rmk:bw_equivalent} the tangent vector corresponding to $M'(0)$ is $V_0 = B_{N, M_1}B_{M_0, N} - I = \Gamma_{N}^{M_0}(B_{N, M_1} - B_{N, M_0})$. Therefore, the curve $M(t)$ satisfies the conditions in Lemma \ref{lem:generalized_geodesic_criteria}.


Now, we compute $\frac{d^2}{dt^2}\calH(M_t)$. First, since $M_t = A_t M_0 A_t^{T}$, $\calH(M_t) = -\log \det(A_t) -\frac{1}{2}\log \det M_0$. Then, for all $t \in (0,1)$,
\begin{align*}
    \frac{d^2}{dt^2}\calH(M_t) &= -\frac{d^2}{dt^2} \log \det(A_t) = -\frac{d}{dt}\tr\left(A_t^{-1} \dot{A}_t\right) = -\frac{d}{dt}\tr\left(A_t^{-1} (B_{N, M_1} B_{M_0, N} - I)\right)\\
    &= -\tr\left(\frac{d}{dt}A_t^{-1} (B_{N, M_1} B_{M_0, N} - I)\right)= \tr\left(A_t^{-1} \dot{A}_t A_t^{-1} (B_{N, M_1} B_{M_0, N} - I)\right)\\
    &= \tr\left(A_t^{-1} (B_{N, M_1} B_{M_0, N} - I) A_t^{-1} (B_{N, M_1} B_{M_0, N} - I)\right)\\
    &\overset{\text{(i)}}{=} \tr\left(\left[A_{t}^{-1/2} (B_{N, M_1} B_{M_0, N} - I) A_t^{-1/2}\right]^2\right) \geq 0
\end{align*}
which is the desired inequality. For (i), we claim that $A_t^{-1/2}$ is well-defined as the principal square root for all $t \in (0,1)$. This follows from the fact that both $B_{N, M_1}, B_{M_0, N}$ are optimal transport maps and thus, by Brenier’s Theorem~\ref{thm:brenier}, they are non-negative definite. Consequently, the product $B_{N, M_1} B_{M_0 N}$ also has non-negative eigenvalues. Since $A_t$ is a convex combination of the identity matrix $I$ and a matrix with non-negative eigenvalues, it follows that all eigenvalues of $A_t$ are strictly positive on $t \in (0,1)$. Hence, all eigenvalues of $A_t^{-1}$ are positive for $t \in (0,1)$, and then $A_t^{-1/2}$ is well-defined as the principal square root. 

Again, since the inequality holds for arbitrary base $N$, we obtain the generalized geodesic convexity of $\calH$.
\end{proof}

Lastly, we show the generalized geodesic smoothness with base $b$ is not strictly stronger than geodesic smoothness. In particular, as mentioned in Section~\ref{section:riem_silver}, we show the function $f(x) = \frac{1}{2}d^2(x,p)$ for fixed $p$ on Hadamard manifold $M$ is generalized geodesically smooth with base $p$, while it is not geodesically smooth \citep{criscitiello2025horosphericallyconvexoptimizationhadamard}.

Before we show the result, we need the following lemma, which shows how logarithmic map changes under the parallel transport. 

\begin{lem}\label{lem:parallel_transport_conversion}
    For all $x, y \in N$, let $\Gamma_{x}^{y}$ be a parallel transport from $x$ to $y$ induced from the geodesic connecting $x$ and $y$. Then,
    \[
    \Gamma_{x}^{y} \log_{x} y = -\log_{y} x. 
    \]
\end{lem}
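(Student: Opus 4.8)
\textbf{Proof plan for Lemma~\ref{lem:parallel_transport_conversion}.}
The plan is to use the constant-speed geodesic $\gamma:[0,1]\to N$ with $\gamma(0)=x$, $\gamma(1)=y$ as the single object through which both $\log_x y$ and $\log_y x$ are expressed, and to exploit the fact that a geodesic, run forward from $x$ or backward from $y$, traces the same curve with opposite orientation. First I would record the two defining facts: by definition of the Riemannian logarithm, $\gamma'(0)=\log_x y$, and the reversed curve $\bar\gamma(t):=\gamma(1-t)$ is again a geodesic with $\bar\gamma(0)=y$, $\bar\gamma(1)=x$, so $\bar\gamma'(0)=\log_y x$, i.e. $\log_y x=-\gamma'(1)$ (chain rule on $t\mapsto 1-t$). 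Thus the claim $\Gamma_x^y\log_x y=-\log_y x$ reduces to showing $\Gamma_x^y\,\gamma'(0)=\gamma'(1)$, where $\Gamma_x^y=\Gamma(\gamma)_0^1$ is parallel transport along $\gamma$ itself.

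The key step is that the velocity field $t\mapsto\gamma'(t)$ is a parallel vector field along $\gamma$. This is exactly the geodesic equation: $\gamma$ is a geodesic means $\nabla_{\dot\gamma}\dot\gamma=0$, which in the language of the covariant derivative $D_t$ along $\gamma$ (Definition of covariant derivative in Appendix~\ref{appendix:riem_geom}) says $D_t\gamma'=0$, i.e. $\gamma'\in\frakX(\gamma)$ is parallel. By definition of parallel transport (Definition 10.35, as quoted), $\Gamma(\gamma)_{t_0}^{t_1}$ maps $Z(t_0)$ to $Z(t_1)$ for any parallel field $Z$; applying this to $Z=\gamma'$ with $t_0=0$, $t_1=1$ gives $\Gamma(\gamma)_0^1\gamma'(0)=\gamma'(1)$, that is $\Gamma_x^y\gamma'(0)=\gamma'(1)$. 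Combining with $\gamma'(0)=\log_x y$ and $\gamma'(1)=-\log_y x$ from the previous paragraph yields $\Gamma_x^y\log_x y=-\log_y x$, as desired.

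I do not expect a genuine obstacle here; the only point requiring a little care is the bookkeeping of orientations — making sure the "$\Gamma_x^y$'' appearing in the statement is transport along the \emph{same} geodesic $\gamma$ used to define both logarithms (it is, by the convention fixed in Section~\ref{section:background} and Remark~\ref{rmk:geodesic_parallel_transport_property}), and handling the reversal $\bar\gamma(t)=\gamma(1-t)$ correctly so that the sign in $\log_y x=-\gamma'(1)$ comes out right. One should also note the implicit well-definedness hypothesis from Assumption~\ref{assumption:common} that such a geodesic exists in $N$ and the logarithms are defined; given that, the argument is a direct unwinding of definitions. A one-line alternative, if one prefers, is to invoke the general identity $\Gamma(\gamma)_{t_0}^{t_1}\gamma'(t_0)=\gamma'(t_1)$ for geodesics (parallel transport of the velocity of a geodesic is the velocity), which is the same statement.
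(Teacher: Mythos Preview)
Your proposal is correct and follows essentially the same approach as the paper: both take the geodesic $\gamma$ from $x$ to $y$, identify $\log_x y=\gamma'(0)$ and $\log_y x=-\gamma'(1)$ via the reversed geodesic, and then use that the velocity field of a geodesic is parallel to conclude $\Gamma_x^y\gamma'(0)=\gamma'(1)$. Your write-up is in fact slightly more explicit than the paper's, which simply invokes ``the property of the geodesic and the parallel transport'' for the last step.
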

This result is analogous result of $y - x = -(x - y)$ in Euclidean case. 

\begin{proof}
    Let $\gamma:[0,1] \to M$ be a geodesic curve such that $\gamma(0) = x$ and $\gamma(1) = y$. Then, by definition of logarithmic map, one gets $\gamma'(0) = \log_x y$.
    
    Now, consider the reversed geodesic $\sigma(t) := \gamma(1-t)$. Then, $\sigma'(0) = -\gamma'(1)  = \log_y x$. By the property of the geodesic and the parallel transport,
    \[
    \Gamma_x^y \log_x y = \Gamma_x^y \gamma'(0) = \gamma'(1) = -\sigma'(0) = -\log_y x.
    \]
\end{proof}

Now we are ready to prove the following example.

\begin{eg}[Generalized geodesic smoothness with base is not restrictive]\label{eg:sq_dist_ggs_with_base}
    The function $f(x) = \frac{1}{2}d^2(x,p)$ on Hadamard manifold $M$ is generalized geodesically $1$-smooth with base $p$.
\end{eg}
\begin{proof}
    On Hadamard manifold, the exponential map is global diffeomorphism. Hence, we have $f(x) = \frac{1}{2}\norm{\log_p x}^2$. In addition, the standard fact on Riemannian manifold is that $\grad f(x) = -\log_x p$ \citep[Section 4]{alimisis20conti}. Now, we show the inequality~\eqref{eq:vt_l_smooth} with $L = 1$. In fact, in this case it holds with equality. This can be verified by the below calculation:
    \begin{align*}
        &f(y) - f(x) - \innerprod{\Gamma_x^p \grad f(x)}{\log_p y - \log_p x}\\
        &\quad = \frac{1}{2}\norm{\log_p y}^2 - \frac{1}{2}\norm{\log_p x}^2 + \innerprod{\Gamma_{x}^p \log_x p}{\log_p y - \log_p x}\\
        &\quad \overset{\text{(i)}}{=} \frac{1}{2}\norm{\log_p y}^2 - \frac{1}{2}\norm{\log_p x}^2 - \innerprod{\log_p x}{\log_p y - \log_p x}\\
        &\quad = \frac{1}{2}\norm{\log_p y}^2 + \frac{1}{2}\norm{\log_p x}^2 - \innerprod{\log_p x}{\log_p y}\\
        &\quad = \frac{1}{2}\norm{\log_p y - \log_p x}^2.
    \end{align*}
\end{proof}

\section{Implementation detail and additional experiments}\label{appendix:additional_experiment}

This section includes implementation detail and more experiments of our algorithm under different settings. We conduct additional experiments on the problems in Section~\ref{section:application}, to show the robustness of our algorithm. In particular, in this appendix we elaborate the following points that were briefly mentioned in the main body.
\begin{enumerate}
    \item Because the silver stepsize schedule sometimes uses very large stepsizes, one might ask whether simply increasing RGD’s constant stepsize could match its performance. We show this is not the case: using a constant stepsize above the critical threshold $2/L$ causes RGD to diverge, while silver stepsize shows the improved performance.
    \item We conducted experiments using multiple random seeds and demonstrate that our algorithm’s performances are statistically significant.
\end{enumerate}

Furthermore, to demonstrate our method’s versatility, we include experiments on an additional optimization problem in the Wasserstein space: the mean-field training of a two-layer neural network. This problem showcases the applicability of our algorithm, and of Wasserstein-based optimization more broadly, to neural network training. In addition, we provide additional experiments on SPD matrix space.

\subsection{Implementation detail}
All experiments in our paper were conducted on the free version of Google Colab using a T4 GPU. Each task took no more than 5 minutes.

\paragraph{Wasserstein potential functional optimization} For the potential functional optimization problem in Section~\ref{section:application_wasserstein}, we used Python packages \texttt{numpy, scipy} for the implementation. We generated $m_*$ from the uniform distribution on the unit cube $[0,1]^d$. For $\Sigma_*$, since we conducted experiments with fixed $L = 1$ and $\alpha = 10^{-1}, 10^{-3}, 10^{-7}, 10^{-13}$, we have $\lambda_{\min} = 1/L = 1$ and $\lambda_{\max} = 1/\alpha$. We placed $d$ points evenly on a log-scale over the interval $[1/L, 1/\alpha]$ and used those values as the eigenvalues to construct a diagonal matrix $\Lambda$. Then, we uniformly sampled an orthogonal matrix $P$ from the uniform distribution on the orthogonal group $O(d)$ (using Haar measure), and set $\Sigma_* = P\Lambda P^{T}$. We used $m_0 = 0$ and $\Sigma_0 = I$ as the initialization for all experiments.

\paragraph{SPD space optimization}

As in Wasserstein potential optimization, for $C$ we placed $d$ points evenly on a log-scale over the interval $[1/\alpha, 1]$ and used those values as the diagonal matrix $\Lambda$. Then, we again uniformly sampled the orthogonal matrix $P$ and set $C = P \Lambda P^T$. We used $\alpha = 10$ and $\alpha = 10^5$ to make the desired condition number.

\subsection{Additional experiments}

\subsubsection{Potential functional optimization}\label{appendix:additional_experi_potential}

We solve the same task as in Section~\ref{section:application_wasserstein}. To verify that our algorithm remains effective with a general choice of iteration count unless $n$ is close to the spikes (e.g. $n = 2^k$), we set the number of iterations $n = 1500$, which is neither of the form $2^k - 1$ nor close to $2^{10} - 1$ or $2^{11} - 1$. For the inner-iterations in the strongly convex setting for the restarting, we chose $m = 20$ for $\alpha = 10^{-1}$ and $m = 500$ for $\alpha = 10^{-3}$, selecting values near the $2^{k^*} - 1$ in Theorem~\ref{thm:riem_silver_strong_convex_smooth} while ensuring divisibility by 1,500. We compared our silver stepsize RGD with constant stepsize RGD using $\eta = 1/L$ (the standard choice), $\eta = 1.99/L$ (just below the theoretical threshold), and $\eta = 2.01/L$ (just above it). The experiment was repeated over 100 random seeds, and we report the mean error curves along with 95\% confidence intervals. Here, using different seeds can be understood as solving instances of a stochastic optimization problem. In this regard, comparing the errors across different seeds is a reasonable evaluation.

The results are displayed in Figure~\ref{figure:wasser_silver_multiple}. Figure~\ref{figure:wasser_silver_multiple} provides evidence supporting our claims:
\begin{enumerate}
\item The algorithm performs well even when the number of iterations is not of the form $2^k - 1$.
\item Our method is not equivalent to simply increasing the constant stepsize in RGD; it consistently outperforms all tested stepsize choices. In particular, the large stepsize RGD, unlike silver stepsize RGD, diverges. 
\item The performances of our algorithm are statistically significant.
\end{enumerate}

\begin{figure*}[t]
    \centering
    \includegraphics[width=0.25\textwidth]{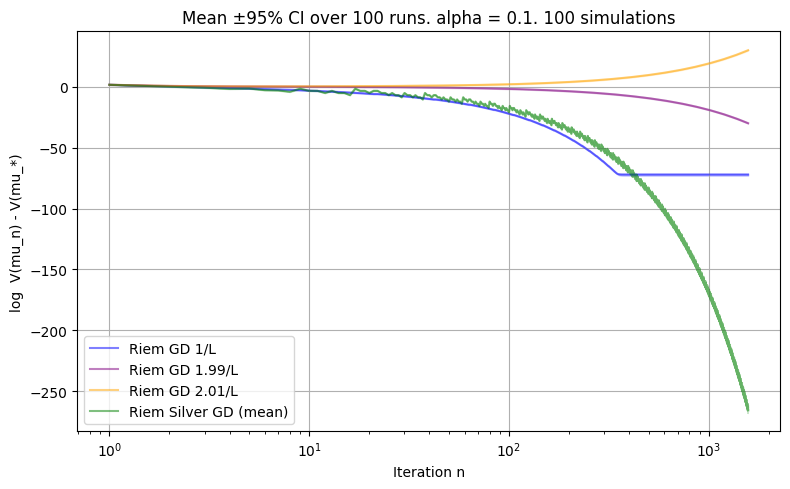}\hfill
    \includegraphics[width=0.25\textwidth]{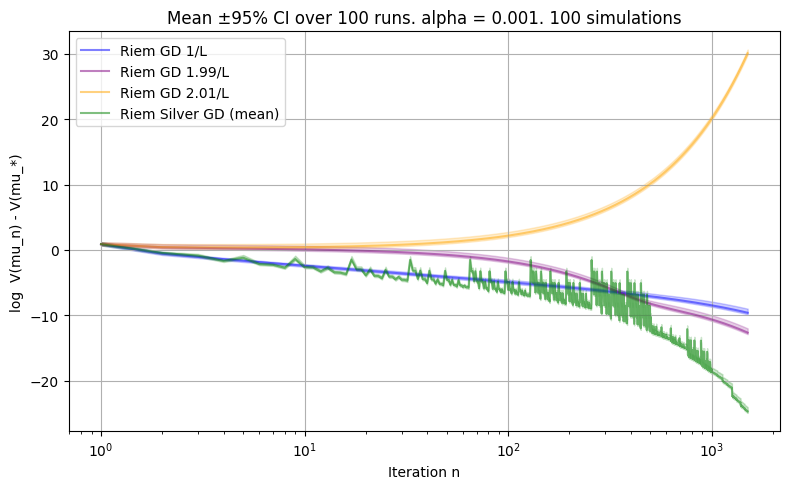}\hfill
    \includegraphics[width=0.25\textwidth]{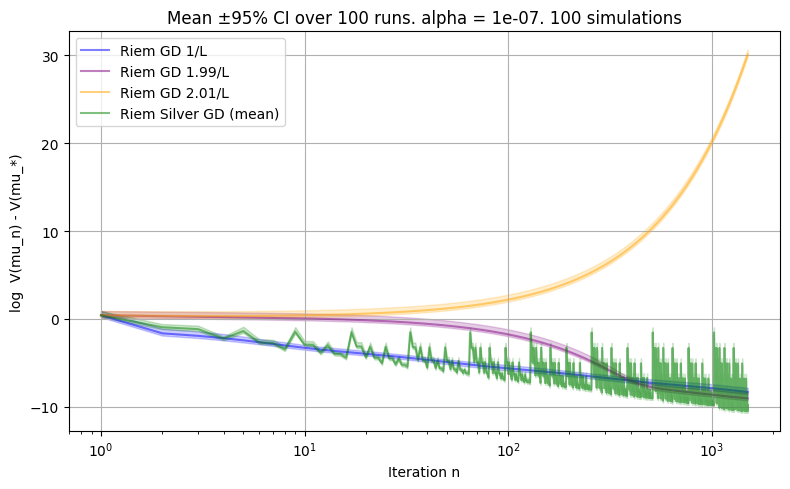}\hfill
    \includegraphics[width=0.25\textwidth]{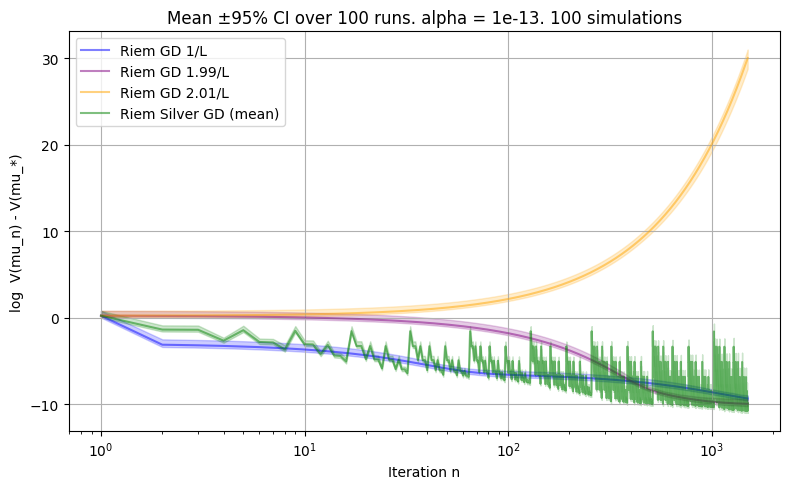}
    \caption{Comparison between silver stepsize method and RGD for potential functional optimization in $BW(\bbR^d)$ with different convexity parameters. For each task, we conduct 100 simulations with different seeds and plot the mean and 95\% confidence interval of the error over the iterates. \textbf{Columns}: From left to right, each column corresponds to $\kappa = 10^{1}, 10^{3}, 10^{7}, 10^{13}$. }\label{figure:wasser_silver_multiple}
\end{figure*}

\subsubsection{Mean-Field Two-Layer Network Training via Wasserstein gradient}\label{appendix:mean_field}

Next, we numerically demonstrate the effectiveness of our algorithms for two-layer neural network training. We first introduce the mean-field training formulation for a two-layer neural network, which enables us to view neural network training as a Wasserstein optimization problem, and then present our experimental results. For further details, we refer the interested reader to \cite{chizat2018globalconvergence, mei2018ameanfield, wojtowytsch2020convergence, fernandez2022thecontinuous}.

\paragraph{Problem formulation} One way to interpret two-layer neural networks is to view their function space as a space of probability measures. In particular, we adopt the Barron space formulation studied in \cite{barron1993universal, wojtwoytsch2020shallow, wojtowytsch2020convergence}. In Barron space formulation, a (possibly infinitely wide) two-layer neural network is represented as
\[
f_{\pi}(x) := \bbE_{(a,w,b) \sim \pi} \left[a \sigma(w^T x + b)\right]
\]
where $\sigma$ denoting a fixed activation function (e.g., ReLU). For instance, a $m$-width two-layer neural network corresponds to $f_{\pi_m}$, where $\pi_m = \frac{1}{m}\sum_{i=1}^{m}\delta_{(a_i,w_i,b_i)}$. 

This formulation enables us to view neural network training as an optimization over probability measures. In particular, it becomes the following risk-functional minimization problem:
\begin{align}\label{eq:mean_field_training_goal}
    \pi_* := \argmin_{\pi \in \calP_{2,ac}(\bbR^d)} R(\pi) := \bbE_{x \sim \bbP} \left[\ell(f_{\pi}(x), f^*(x))\right]
\end{align}
where $f^*$ is the target function, $f_{\pi}$ is the two-layer neural network, and $\ell$ is a loss function (e.g., squared loss). The neural network $f_{\pi_{*}}$ is the risk-functional minimizer and thus the desired solution.
Since \eqref{eq:mean_field_training_goal} is now just the optimization problem on the Wasserstein space, it is possible to consider Wasserstein gradient descent algorithms \eqref{eq:wasserstein_silver_gradient} to solve \eqref{eq:mean_field_training_goal}:
\begin{align}\label{eq:mean_field_update}
    \pi_{n+1} = \left(id -\eta_n \wgrad R(\pi_n)\right)_{\# \pi_n}.
\end{align}
In practice, this update operates over the space of functions and is thus not directly implementable. Instead, one typically uses a particle approximation of the probability measure, \ie 
\[
\pi_n = \frac{1}{m}\sum_{i=1}^{m}\delta_{(a_{i}^{(n)}, w_{i}^{(n)}, b_{i}^{(n)})},
\]
where $m$ is the number of particles chosen by the user \citep{salim2020wasserstein, wang2022accelerated}. Under this approximation, the Wasserstein gradient update becomes
\begin{align*}
\pi_{n+1} &= \left(id -\eta_n \wgrad R(\pi_n)\right)_{\# \pi_n} \\
&= \frac{1}{m}\sum_{i=1}^{m}\delta_{(a_{i}^{(n)},w_{i}^{(n)},b_{i}^{(n)}) - \eta_n \wgrad R(\pi_n)(a_{i}^{(n)},w_{i}^{(n)},b_{i}^{(n)})}.
\end{align*}
Using Definition~\ref{defn:wasser_gradient}, it is known from \cite{wojtowytsch2020convergence} that
\[
\wgrad R(\pi)(a, w, b) = \bbE_{x \sim \bbP}\left[\nabla_{(a,w,b)} \ell(f_{\pi}(x), f^*(x))\right].
\]
Therefore, the particle approximation of the Wasserstein gradient update for a two-layer neural network takes the form
\begin{align}\label{eq:mean_field_update_particle}
    (a_{i}^{(n+1)}, w_{i}^{(n+1)}, b_{i}^{(n+1)}) = (a_{i}^{(n)}, w_{i}^{(n)}, b_{i}^{(n)}) - \eta_n \bbE_{x \sim \bbP}\left[\nabla_{(a_{i}^{(n)},w_{i}^{(n)},b_{i}^{(n)})} \ell(f_{\pi_n}(x), f^*(x))\right]
\end{align}
for $i = 1, \dots, m$. Observe \eqref{eq:mean_field_update_particle} exactly coincides with the standard gradient descent update of the parameters.

In conclusion, the silver stepsize (and, respectively, constant stepsize) parameter updates in two-layer neural networks \eqref{eq:mean_field_update_particle} can be interpreted as the particle approximation of silver stepsize (\resp constant stepsize) Wasserstein gradient descent \eqref{eq:mean_field_update}  applied to the risk minimization problem \eqref{eq:mean_field_training_goal}. Hence, we consider applying silver stepsize WGD \eqref{eq:wasserstein_silver_gradient} on this problem.

\paragraph{Numerical experiments}

To evaluate the effectiveness of the silver stepsize for this task, we conduct experiments on learning a target function using a two-layer neural network with ReLU activation. Specifically, we consider the simple task of learning a univariate function $f^*: [-1,1] \to \bbR$. We consider two target functions: 
\begin{enumerate}
    \item $f^*(x) = \frac{1}{30}\sum_{i=1}^{30} a_i^* \sigma(w_i^* x + b_i^*)$, \ie a 30-width two-layer neural network with fixed parameters $a_i^*, w_i^*, b_i^*$. Here, $\sigma$ is the ReLU activation.
    \item $f^*(x) = \sin(2\pi x)$.
\end{enumerate}
We use $N= 200$ samples, with 70\% of the data used for training and the remaining 30\% for testing. The model is a two-layer neural network with width $m = 100$, trained using mean squared loss. We set the smoothness parameter to $L = 100$, and the number of training iterations to $n = 2000$.

\begin{figure*}[t]
    \centering
    \includegraphics[width=0.4\textwidth]{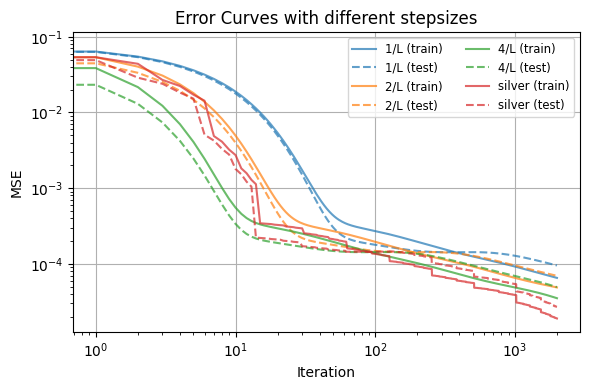}\qquad
    \includegraphics[width=0.4\textwidth]{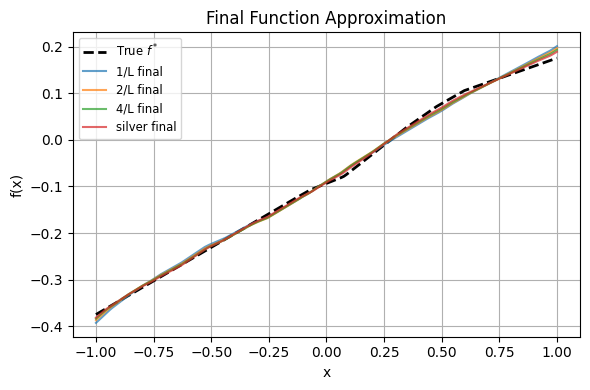}\\
    \includegraphics[width=0.4\textwidth]{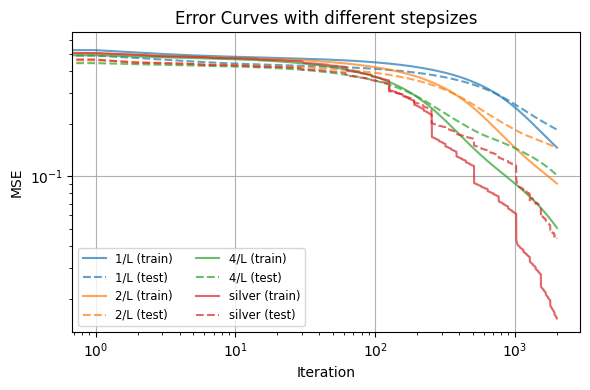}\qquad
    \includegraphics[width=0.4\textwidth]{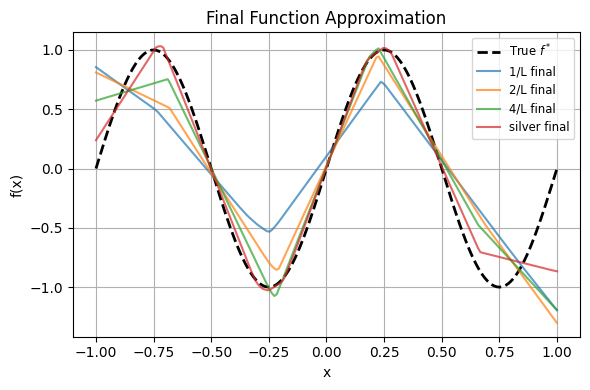}\hfill

    \caption{Mean-field training \eqref{eq:mean_field_update_particle} of two-layer neural networks. \textbf{Rows}: The first row is the results from $f^*(x) = \frac{1}{30}\sum_{i=1}^{30}a_i^*\sigma(w_i^* x + b^*)$, and the second row is the results from $f^*(x) = \sin(2\pi x)$. \textbf{Columns}: The first column is the training and test error curve, and the second column is the function graph of the learned function.}\label{fig:mean_field}
\end{figure*}

Figure~\ref{fig:mean_field} shows the results of our experiments for solving \eqref{eq:mean_field_update_particle} using different stepsize schedules. Consistent with previous findings, the silver stepsize algorithm outperforms constant stepsize RGDs with various stepsizes in solving \eqref{eq:mean_field_training_goal}. While the figure displays results for a specific random seed, we observed similar trends across multiple seeds.

\subsubsection{Additional experiments on SPD space}\label{appendix:additional_experiment_in_SPD}

As briefly mentioned in the main body, there is a non-trivial family of functions that is favorable to use VTRGD algorithm \eqref{eq:VTRGD}, which are the functions of the form $f_{\varphi}(X) = \varphi(\log \det X)$ for some $\varphi: \bbR \to \bbR$. The first favorable fact is that $f_{\varphi}(X)$ is generalized geodesically convex (\resp $L$-smooth) on $(\SPD(d), d_{AI})$ when $\varphi$ is convex (\resp $L$-smooth).

\begin{prop}\label{prop:ggc_lgs_of_entropy_ai}
    For any convex function $\phi:\bbR \to \bbR$, the functional $f_{\phi}(X) = \phi(\log \det X)$ defined on $\SPD(d)$ is generalized geodesically convex. Also, for any $L$-smooth function $\psi:\bbR \to \bbR$, the function $f_{\psi}(X) = \psi(\log \det X)$ is generalized geodesically $Ld$-smooth. 
\end{prop}

\begin{proof}

We will show for arbitrary base $B \in \SPD(d)$, Definition~\ref{defn:generalized_geodesic_convexity} and \ref{defn:vt_l_smooth} holds with $\vt = \Gamma$.

First, we consider $g(X) = \log \det X$. We first show for any $X,Y, B \in \SPD(d)$ 
\begin{align}\label{eq:generalized_geodesic_linear_of_entropy}
    g(Y) - G(X) = \innerprod{\Gamma_{X}^{B}\grad g(X)}{\log_B Y - \log_B X}.
\end{align}
To verify this, first observe that $\grad g(X) = X$. This can be verified by the definition of Riemannian gradient. Riemannain gradient is defined by the operator satisfying for all $H \in \Sym(d)$
\[
\tr(X^{-1}\grad g(X) Y^{-1}H) = \innerprod{\grad g(X)}{H}_X = dg_X(H) = \tr(X^{-1}H) .
\]
For the last inequality we used the well-known formula for the derivative of log-determinant function. Since $X \in \SPD(d)$, this implies $\grad g(X) = X$.

Next, we show $\Gamma_{X}^B X = X$. To check this, from the definition of the parallel transport in $(\SPD(d), d_{AI})$ \citep[Supplement 1.1]{nguyen2022gyrovector},
\[
\Gamma_{X}^B X = (BX^{-1})^{1/2}X((BX^{-1})^{1/2})^T = B.
\]
Hence,
\[
\innerprod{\Gamma_{X}^{B}X}{\log_{B}Y - \log_{B}X} = \innerprod{B}{\log_B Y} - \innerprod{B}{\log_B X}.
\]
Now, for any matrix $M$, from the definition of the Riemannian metric on $d_{AI}$ and logarithmic map \citep[Supplement 1.1]{nguyen2022gyrovector},
\begin{align*}
\innerprod{B}{\log_B M} 
&= \tr(B^{-1}\log_{B}M) 
= \tr(B^{-1/2}\log(B^{-1/2}MB^{-1/2})B^{1/2}) = \tr(\log(B^{-1/2}MB^{-1/2}))\\
&= \log \det(B^{-1/2}MB^{-1/2}) = 2\log \det B^{-1/2} + \log \det M.
\end{align*}
Therefore,
\[
\innerprod{\Gamma_{B}^C B}{\log_C A - \log_C B} = \log \det A - \log \det B = g(A) - g(B)
\]
which shows \eqref{eq:generalized_geodesic_linear_of_entropy}.

Now we are left with the $\phi$ and $\psi$ part. For $\phi$, by the convexity of $\phi$,
\begin{align*}
\phi \circ g(Y) - \phi\circ g(X)
&\geq \phi'(g(X))(g(Y) - g(X)) 
= \phi'(g(X)) \innerprod{\Gamma_{X}^{B}\grad g(X)}{\log_{B}Y -\log_B X}\\
&= \innerprod{\Gamma_{X}^{B}\grad (\phi \circ g)(X)}{\log_{B}Y - \log_B X}
\end{align*}
where the last equality is from the chain rule and linearity of the parallel transport and gradient. This shows $\phi \circ g$ is $\Gamma$-geodesically convex with arbitrary base $B$.

For $\psi$, first observe
\begin{align*}
g(Y) - g(X) &= \innerprod{\Gamma_X^B \grad g(X)}{\log_B Y - \log_B X} 
\leq \norm{\Gamma_X^B \grad g(X)}\norm{\log_B Y - \log_B X} \\
&\leq \norm{X} \norm{\log_B Y - \log_B X} = \tr(X^{-1}XX^{-1}X)\norm{\log_B Y - \log_B X} = d\norm{\log_B Y - \log_B X}.    
\end{align*}

Then, using the $L$-smoothness of $\psi$,
\begin{align*}
    \psi \circ g(Y) - \psi \circ g(X) &\leq \psi'(g(X))(g(Y) - g(X)) + \frac{L}{2}\norm{g(Y) - g(X)}^2\\
    &\leq \innerprod{\Gamma_X^B \grad (\psi \circ g)(X)}{\log_B Y - \log_B X} + \frac{Ld}{2}\norm{\log_B Y - \log_B X}^2
\end{align*}
which shows $\psi \circ g$ is $\Gamma$-geodesically $Ld$-smooth with arbitrary base $B$.
    
\end{proof}

Note affine invariant metric yields the complete Riemannian manifolds on $\SPD(d)$ \citep{pennec2005ARF}, so Proposition~\ref{prop:ggc_lgs_of_entropy_ai} combined with Proposition~\ref{prop:generalized_geodesic_and_f_ineq_mainbody_one} yields \eqref{eq:f_inequality_condition} with any base $b \in \SPD(d)$, hence guarantee Theorem~\ref{thm:silver_stepsize_riemannian_general}. On the other hand, since $X \mapsto \log \det X$ is not $L$-smooth in matrix norm sense, Proposition~\ref{prop:ggc_lgs_of_entropy_ai} highlights the strength of RGD algorithms for optimization problems involving functions of the form $f_{\varphi}(X) = \varphi(\log \det X)$.

Moreover, as in Wasserstein space, for this certain function VTRGD coincides with standard RGD. 

\begin{prop}\label{prop:spd_logdet_vtrgd_is_rgd}
    For the function of the form $f_{\varphi}(X) = \varphi(\log \det X)$, VTRGD algorithm coincides with standard RGD, \ie for any $X, B \in \SPD(d)$,
    \[
    \exp_B(\log_B X - \Gamma_{X}^B \grad f_{\varphi}(X)) = \exp_{X}(\grad f_{\varphi}(X)).
    \]
\end{prop}

\begin{proof}
    We directly compute \eqref{eq:VTRGD} and compare. We write $B$ as the base point. First, from the calculations in the proof of Proposition~\ref{prop:ggc_lgs_of_entropy_ai},
    \[
    \Gamma_{X_n}^B \grad f_{\varphi}(X_n) = \Gamma_{X_n}^B \varphi'(\log \det X_n) \grad (\log\det X_n) = \varphi'(\log \det X_n) \Gamma_{X_n}^B X = \varphi'(\log \det X_n)  B.
    \]
    Hence,
    \begin{align*}
        &\exp_B\left(\log_B X_n - \eta_n\Gamma_{X_n}^B \grad \log \det X_n\right) \\
        &\qquad = B^{1/2}\exp\left(B^{-1/2}[\log_{B}X_n - \eta_n\varphi'(\log\det X_n)B]B^{-1/2}\right)B^{1/2}\\
        &\qquad = e^{-\eta_n \varphi'(\log \det X_n)}B^{1/2}\exp\left(B^{-1/2}\log_{B}X_n B^{-1/2}\right)B^{1/2}\\
        &\qquad = e^{-\eta_n \varphi'(\log \det X_n)} X_n = \exp_{X_n}(-\eta_n \varphi'(\log \det X_n) X_n) = \exp_{X_n}(-\eta_n \grad f_{\varphi}(X_n)).
    \end{align*}
\end{proof}

The function $X \mapsto \log \det X$ can be considered as the entropy of the zero-mean Gaussian distributions (up to affine transform). Hence, functionals of the form $\varphi(\log \det \cdot)$ appears in many practical applications, such as entropy regularization.

\paragraph{Entropy matching}
For the numerical experiment we consider the problem 
\[
f_{\delta}(X) = \delta^2\left(\sqrt{1 + \frac{(\log \det X - \tau)^2}{\delta^2}} - 1\right)
\]
which corresponds to the entropy-matching problem with target value $\tau$ under the pseudo-Huber loss. We adopt the pseudo-Huber loss to examine how the algorithm’s behavior depends on the curvature of the objective function (small $\delta$ yields a flatter function). Since pseudo-Huber loss is convex and 1-smooth, by Proposition~\ref{prop:ggc_lgs_of_entropy_ai} $f(A)$ is generalized geodesically convex and generalized geodesically $d$-smooth. In addition, by Proposition~\ref{prop:spd_logdet_vtrgd_is_rgd} we can apply silver stepsize directly to standard RGD. We conduced 100 experiments with $\tau = 1$ under different initializations, to verify our algorithm's stability. The results are summarized in Figure~\ref{figure:spd_once}. 

\begin{figure*}[t]
    \centering

    \includegraphics[width=0.25\textwidth]{figures/spd/multiple_pseudo_huber_loss_d50_delta01.png}~~~~~\qquad
    \includegraphics[width=0.25\textwidth]{figures/spd/multiple_pseudo_huber_loss_d50_delta05.png}~~~~~\qquad
    \includegraphics[width=0.25\textwidth]{figures/spd/multiple_pseudo_huber_loss_d50_delta1.png}
    \caption{Comparison between silver stepsize method and RGD for entropy matching under $\delta$-pseudo Huber loss on $\SPD(50)$. We set $n = 2^{7} - 1$ as the total iteration number, and conducted 100 simulations under different seeds and initializations. \textbf{Left:} $\delta = 0.1$. \textbf{Middle:} $\delta = 0.5$. \textbf{Right:} $\delta = 1$.}\label{figure:spd_once}
\end{figure*}
\vspace{-0.05in}

We additional provide two more benchmark problems on $\SPD(d)$ space. The Fr\'echet mean estimation problem and Gaussian mixture model problem. Note for these general problem we do not have Proposition~\ref{prop:spd_logdet_vtrgd_is_rgd}, so we again need to choose the base for VTRGD \eqref{eq:VTRGD}. We again chose $b = I$ for both experiments.

\paragraph{Fr\'echet mean estimation}
Fr\'echet mean estimation problem on the SPD space is widely studied problem for both theoretically interesting properties and practical application. Practically, under Affine invariant metric, Fr\'echet mean becomes geometric mean  \cite{fillard2005extrapolation, pennec2005ARF} and therefore has many applications. Theoretically the Fr\'echet mean over $\SPD$ matrices is again SPD, so unlike typical matrix norm, this geometry is favorable when SPD constraint has to be involved when obtaining the means over SPD matrices; such constraint is common in covariance estimation problem \cite{kim2025robust}. Hence, this problem is one of the standard benchmark for RGD methods \citep{alimisis21momentum, kim22riem_accel}. The Fr\'echet mean estimation problem can be written for general manifold $M$ as follows: for given $\set{p_1, \dots, p_n} \subset M$, the empirical Fr\'echet mean over $p_i$ is
\[
p_* := \argmin_{x \in M} f(x):=\frac{1}{2}\sum_{i=1}^{n} d^2(x, p_i).
\]
This problem is known to be geodesically $n$-strongly convex, but not geodesically $L$-smooth. While the squared distance function $x \mapsto d^2(x,p)/2$ is generalized geodesically $1$-smooth with base $p$, as we discussed in Section~\ref{section:riem_silver}, by summing these together there is no single base that makes $f$ to be generalized geodesically $L$-smooth with base $b$. Hence, neither RGD or VTRGD guarantee the theoretical convergence for this problem. Still, we found out that our algorithm numerically outperforms the standard RGD for this problem.

With $M = \SPD(d)$, We considered two settings: $(n,d) = (100, 10)$, and reversely $(n,d) = (10,100)$, to observe whether dimension or sample size matters. Again we chose $b = I$ for VTRGD \eqref{eq:VTRGD}. We observed consistent strength of our method. The results are summarized in Figure~\ref{figure:spd_barycenter}.

\begin{figure*}[t]
    \centering
    \includegraphics[width=0.25\textwidth]{figures/spd/barycenter/seed123d10n100barycenter.png}~~~~~\qquad
    \includegraphics[width=0.25\textwidth]{figures/spd/barycenter/seed123d100n10barycenter.png}
    
    \caption{Optimization of Barycenter problem. We plot $f(x) - f_{\min}$, where $f_{\min}$ is the minimum value over all experiments. We set $b =  I$ for VTRGD \eqref{eq:VTRGD}. \textbf{Left:} $n = 100$ and $d = 10$. We set $L = 200$ which guaranteed the stability.  \textbf{Right:} $n = 10$ and $d = 100$. We set $L = 100$ which again guaranteed the stability.}\label{figure:spd_barycenter}
\end{figure*}
\vspace{-0.05in}

\paragraph{Gaussian mixture model}
Another practical application of Riemannian optimization on $\SPD(d)$ is minimization of the negative likelihood in Gaussian mixture model, which is a classical problem in Statistics. \cite{HosseiniSra2020AlternativeEM} proposed the reformulated objective function for solving Gaussian mixture model fitting, which coincides to classical negative log likelihood minimization at the minima, and allows the use of Riemannian optimization algorithm \citep{HosseiniSra2020AlternativeEM, han2021riemannian, Sembach2021riemanniannewton}. For given observations $y_i =(x_i, 1)^{T} \in \bbR^d$ for $i = 1,\dots, n$, the problem is formulated as follows:
\[
\argmin_{\theta = (\set{w_j}_{j=1,\dots, K}, \set{S_j}_{j=1,\dots, K})} \ell(\theta) = -\sum_{i=1}^{n}\log\left(\sum_{j=1}^{K}\frac{e^{w_j}}{\sum_{l=1}^{K}e^{w_l}}q_{N}(y_i;S_j)\right)
\]
where
\[
q_N(y_i;S_j) = \frac{\exp\left(\frac{1}{2}(1 - y_i^TS_j^{-1}y_i)\right)}{\sqrt{(2\pi)^d \det S_j}}.
\]
This problem can be considered as the product manifold $(\bbR^d)^K \times (\SPD(d))^{K}$. One can conduct usual Euclidean gradient descent for vectors $w_j$'s, and conduct Riemannian methods in $S_j$'s. Again, this problem is not geodesically $L$-smooth, so neither our algorithm nor standard RGD do not allow the theoretical guarantee. Still, we found out that numerically our algorithm turned out to be useful for this problem.

We considered two setting: simple setting ($n = 100, d = 2, K = 3, L = 20$) and complicated setting ($n = 1000, d = 5, K = 5, L = 1000$). The quantities of $L$ are chosen again to guarantee the stability of the algorithms. Again, we set $b = I$ for VTRGD \eqref{eq:VTRGD}. The results are aggregated in Figure~\ref{figure:spd_gmm}.

\begin{figure*}[t]
    \centering
    \includegraphics[width=0.25\textwidth]{figures/spd/gmm/dim2_n100_k3.png}~~~~~\qquad
    \includegraphics[width=0.25\textwidth]{figures/spd/gmm/dim5_n1000_k5.png}
    
    \caption{Optimization of Gaussian mixture model fitting. We set $b =  I$ for VTRGD \eqref{eq:VTRGD}. \textbf{Left:} Simple setting, where $n = 100, K = 3, d = 2, L = 20$.  \textbf{Right:} Complicated setting, where $n = 1000, K = 5, d = 5, L = 1000$. The $L$ is chosen to guarantee the numerical stability.}\label{figure:spd_gmm}
\end{figure*}
\vspace{-0.05in}

\end{document}